\newcommand {\R}{\mathbb R}
\newcommand {\Z}{\mathbb Z}
\newcommand {\N}{\mathbb N}
\newcommand {\CN}{\mathbb C}
\newcommand {\CP}{{\mathbb C}{\bf P}}
\newcommand\scalescale[2]{\hstretch{#1}{\vstretch{#1}{#2}}}
\newcommand\Preccurlyeq{\scalescale{1.5}{\preccurlyeq}}
\theoremstyle{definition}
\newtheorem{definition}{Definition}
\theoremstyle{plain}
\newtheorem{theorem}{Theorem}
\newtheorem{lemma}{Lemma}
\newtheorem{example}{Example}
\newtheorem{problem}{Question}
\newtheorem{proposition}{Proposition}
\tikzset{
  edge node/.code={%
      \expandafter\def\expandafter\tikz@tonodes\expandafter{\tikz@tonodes #1}}}
\tikzset{
  preccurlyeq/.style={
    draw=none,
    edge node={node [sloped, allow upside down, auto=false]{$\Preccurlyeq$}}},
  Preccurlyeq/.style={
    draw=none,
    every to/.append style={
      edge node={node [sloped, allow upside down, auto=false]{$\Preccurlyeq$}}}
  }}
\title{Continuity argument revisited: geometry of root clustering via symmetric products}
\author{Grey Violet\footnote{This work were partially done during author's Marie Curie Incoming Fellowship at the University of Konstanz.}\\Zukunftskolleg and Department of Mathematics and Statistics.\\ University of Konstanz. Konstanz, Germany.\\ navernoeinikogda@gmail.com}
\date{\today}
\begin{document}
%\enlargethispage{126pt}
\maketitle

\begin{abstract}
We study the spaces of polynomials stratified into the sets of polynomial with fixed number of roots inside certain semialgebraic region $\Omega$, on its border, and at the complement to its closure. Presented approach is a generalisation, unification and development of several classical approaches to stability problems in control theory: root clustering ($D$-stability) developed by R.E. Kalman, B.R. Barmish, S. Gutman et al., $D$-decomposition(Yu.I. Neimark, B.T. Polyak, E.N. Gryazina) and universal parameter space method(A. Fam, J. Meditch, J.Ackermann).
    
Our approach is based on the interpretation of correspondence between roots and coefficients of a polynomial as a symmetric product morphism.

We describe the topology of strata up to homotopy equivalence and, for many important cases, up to homeomorphism. Adjacencies between strata are also described. Moreover, we provide an explanation for the special position of classical stability problems: Hurwitz stability, Schur stability, hyperbolicity.
\end{abstract}\newpage
\tableofcontents

\section{Introduction}

The problem of root clustering \-- study of the sets of polynomials or matrices with fixed distribution of roots (eigenvalues) relative to some region $\Omega$ is one of the classical problems in control theory. 

Namely, asymptotic stability for linear continuous-time systems introduced in 1868 by J.C. Maxwell in the very first paper on control theory \cite{Max1868} led to the study of the distribution of roots relative to the Hurwitz stability region \--- open left half-plane $\Omega=\{Re\, z<0\},$  asymptotic stability for the linear discrete-time systems have led to the distributions of roots relative to the open unit disk(Schur stability) $\Omega=\{|z|<1\},$ \cite{Sch1917}, real-rootedness (hyperbolicity) property of a polynomial, localisation of roots relative to some finite number of points, are among the most classical situations.

General root clustering problems in control have been systematically studied since the appearance of a seminal paper by R.E Kalman \cite{Kal1969} from 1969. That moment could be seen as a start of development of {\it root clustering theory} as a special topic in control. Since then many different types of stability regions have been examined in many different contexts: sufficient condition for Hurwitz and Schur superstabilizability  \cite{PS2002} and other sectoral stability regions, aperiodicity( in a form $\Omega=\{z\in\R, z<0\}$) \cite{NP1994}, unions of disjoint disks \cite{AY1997}, plane curves \cite{Maz1999}, Cassini ovals \cite{MD2014},  $LMI$-regions(i.e. rigidly convex ones \cite{HV2007}) \cite{CG1996,CGA2002}, Ellipsoidal Matrix Inequality Regions ($EMI$-regions) \cite{Peauc2000,PABB2000}, Extended Ellipsoidal Matrix Inequality Regions ($EEMI$-regions) \cite{BM2003}, Boolean combinations of different regions\cite{BHPM2004,BBM2005}. Among these multiple contributions should be especially noted a general theory of root clustering that has been developed by E. Jury and S. Gutman since the beginning of 1980's (see \cite{GJ1981} for formulation of the foundations and book \cite{Gut1990}). B.R. Barmish \cite{Bar1993} also consider close theory of $D$-stability in his classical book on robust control. Due to a vast amount of literature on the subject we are leaving many references to the (very partial) historical review below.

Examples of different root clustering region could be seen on Fig. \ref{stabreg}.

% Close questions are naturally arising in different other areas of mathematics: algebraic geometry, dynamical systems, analysis , singularity theory, topology. 

Despite that no general geometric theory of that kind of problems is known up to date, there were several attempts of different geometric approaches to this type of problems. Among those one should note such geometric approaches to the Hurwitz stability as $D$-decomposition, with study have been started by A.A. Andronov and Yu.I. Neimark in 1940-s \cite{Nei1949}, with the  most recent developments being attributed to E.N. Gryazina, B.T. Polyak \cite{GP2006,GPT2008} in 2000's and the author in 2010's \cite{Vas2012a,Vas2012b,Vas2014}. That approach consists in study the decomposition of parameter space for a fixed affine polynomial family 

$$
a_0f_0(x)+\ldots +a_kf_k(x)
$$

into the regions with fixed number of Hurwitz stable \--- located inside the Hurwitz stability region. and Hurwitz unstable roots \--- located outside of the closure of the stability region. This decomposition is called {\it $D$-decomposition}. A number of algorithms (both analytic and algebraic \-- related to the Quantifier Elimination problems), complicated examples and topological complexity bounds have been obtained within the framework of that approach.

Another important geometric approach is universal parameter space theory, which development have been started since A. Fam and J. Meditch from 1978 \cite{FM1978}.
Here topology and algebraic geometry of the space of Hurwitz and  Schur stable polynomials \--- polynomials with all of their roots belonging to the root clustering region, for the case of indeterminate polynomial family has been considered. 

Our line of research consists in unification and generalisation of all 3 approaches considered before. Namely, that paper is the start of the development of a general geometric theory of root clusterisations for any semialgebraic region $\Omega.$ Here, we consider a refinement of the $D$-decomposition of the parameter space for an indeterminate polynomial, that we a call a {\it $D$-stratification}. $D$-stratification consists in the decomposition of parameter space into the region with fixed number of roots inside the $\Omega,$ on the border of $\Omega$ and in the complement to the closure of $\Omega$ for an arbitrary semialgebraic $\Omega.$

Instead of concentrating on the study of different very special regions $\Omega$ or very special families of polynomials or providing some purely algorithmic root clustering criteria, we are trying to figure out some limited number of natural geometric constructions lying behind that kind of problems and sometimes informally used in classical papers as a {\it continuity argument}. That paper is a first part of a longer text, that will cover not not only topological properties of the sets in question, but also their singularity theory, convexity-like properties, metric properties and construction of stratified spaces parametrising stability problems.  Matrix problems are only slighlty mentioned there.

Main of these natural geometric construction is the symmetric product construction. Namely, our idea consists in transferring simple planar semialgebraic geometry and topology of the stability region $\Omega$ across the symmetric product functor, to the parameter space. Through the means of this transfer we become able to compute the topology of the $D$-strata. In order to show the computability of all constructions we prefer to use a semialgebraic category.

That paper is devoted mainly to the topological study of problems in question, as well as to the development of basic formalism covering main properties and classical intuitions behind this set of problems.

%\afterpage{\clearpage}

We prove (Theorem \ref{classicalpoly}) that the stratification of $\CP^1$ with respect to the $\Omega$ transferred via symmetrization induces a $D$-stratification on the space of coefficients of an indeterminate polynomial. Stratum of the $D$-stratification is a set of all polynomials with fixed number of stable, semi-stable and unstable roots. Using that construction and different results on topology of symmetric products we describe topology of $D$-strata and adjacency relations between them.

In order to show computability of all constructions and in order to make a later use of the developed formalism in the study of singularities we make all work in a semialgebraic category. 

We use here the language of abstract semialgebraic sets \-- semialgebraic spaces. It is essential because of Lemmata \ref{unmixing},\ref{unmixingconnected} where one need to consider a morphism from some semialgebraic subset of the product of complex projective spaces, which couldn't be extended to the variety as a whole. This is the main instrument of transfer of the planar geometry of the root space into the complicated geometry of the space of coefficients, we mentioned before.

It is important to state that we are able to capture not only the geometry of a fixed degree polynomial, but also a local geometry of {\it degree change}, capturing, therefore a classical intuition of births and deaths of roots at the infinity, which forms an important part of a classical intuition. That geometry is captured by the structure of {\it filtered space} on the space of indeterminate polynomials, which happens to agree with $D$-stratification. Interaction between $D$-stratification and filtered structure is examined in Proposition \ref{birthdeath1} and Theorem \ref{adjdigstab}. We also introduce an adjacency digraph functor allowing us to compute structure of adjacencies between $D$-strata in a natural way (Lemma \ref{funct}, Proposition \ref{spoweradj}).

Moreover, our construction of the root-coeffient correspondence as a symmetric product of stratified variety allows to obtain a natural duality between spaces of monic and non-monic polynomial, which happens to be also a duality between polynomials and matrices (Theorem \ref{matrixpoly}). In the matrix case conjugation action of the group $Gl(\CN,n)$ on matrices appears as  in certain sense dual to the action of symmetric group on the space of polynomial roots. Here we are not going into further consideration of the subject, just noting that this duality is produced by two types of degree-changing perturbations of a polynomial.

$$
\begin{array}{l}
\text{Non-monic polynomials:}\quad\ a_nz^n+\ldots +a_0\mapsto \epsilon z^{n+1}+a_nz^n+\ldots+a_0\\
\text{Matrices and monic polynomials:}\,\, a_nz^n+\ldots +a_0\mapsto z(a_nz^n+\ldots +a_0)+\epsilon.
\end{array}
$$
In order to make above-mentioned apparatus work we introduce category of filtered stratified real algebraic varieties and their semialgebraic subspaces. That formalism, including (symmetric) group actions on those structures is developed in the Section 3.

Formalisms of stratified filtered real algebraic varieties and formalism of stability theories and $D$-stratifications allows us to describe the topology of $D$-strata up to homotopy equivalence, where $D$-strata appearing to be described via products of  and, in many cases, up to homeomorphism. One example result, appearing in it's full formulation in Theorem \ref{hott} and Proposition \ref{compactconnected} is mentioned below:

\begin{figure}[H]
\subcaptionbox{Hurwitz stability theory: $\Omega=\{Re\,z<0\}.$}{\includegraphics[width = 2.1in]{./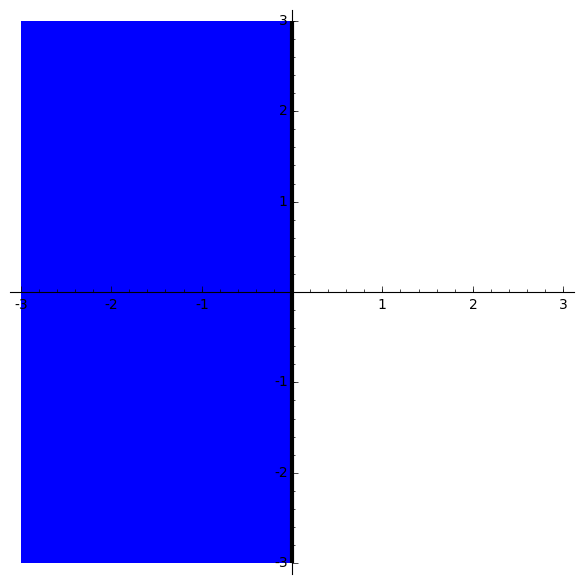}}\qquad
\subcaptionbox{Schur stability theory: $\Omega=\{|z|<1\}.$}{\includegraphics[width = 2.1in]{./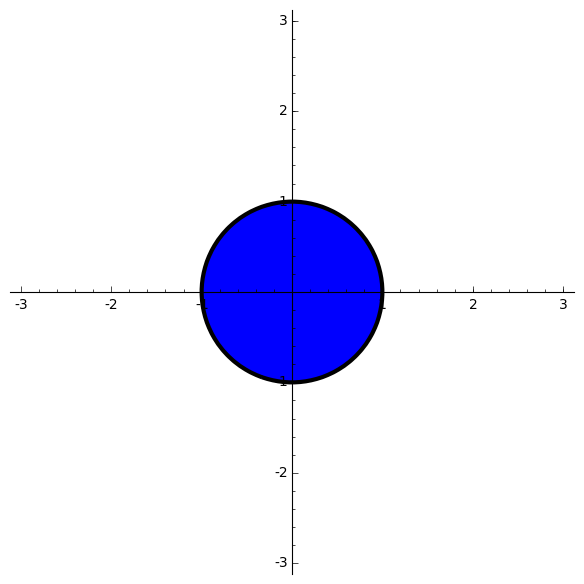}}\\   
\subcaptionbox{Sectoral stability. Sufficient condition for Hurwitz superstabilizability \protect{\cite{PS2002}}. $\Omega=\{Re\,z+|Im\,z|<0\}.$}{\includegraphics[width = 2.1in]{./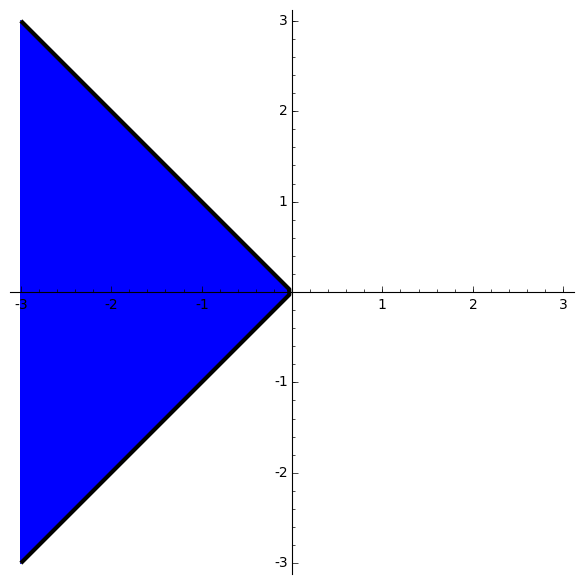}}\qquad
\subcaptionbox{Sufficient condition for Schur superstabilizability \protect{\cite{PS2002}}. $\Omega=\{|Re\,z|+|Im\,z|<1\}.$}{\includegraphics[width = 2.1in]{./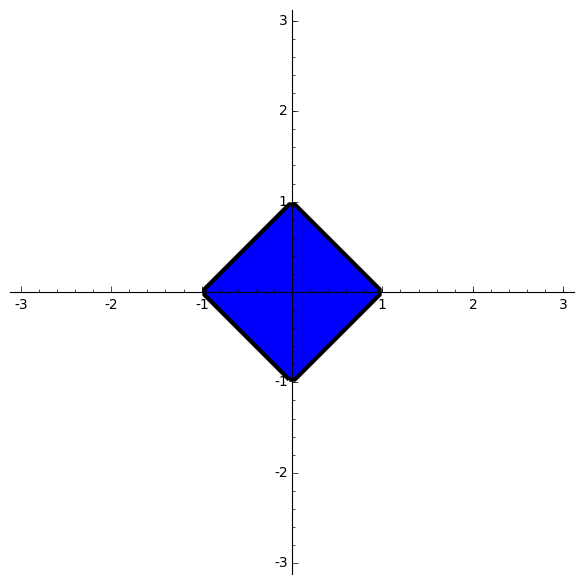}}\\
\subcaptionbox{Hyperbolicity: $\Omega=\{Im\, z<0\},$ $\Omega_{ss}=\R.$}{\includegraphics[width = 2.1in]{./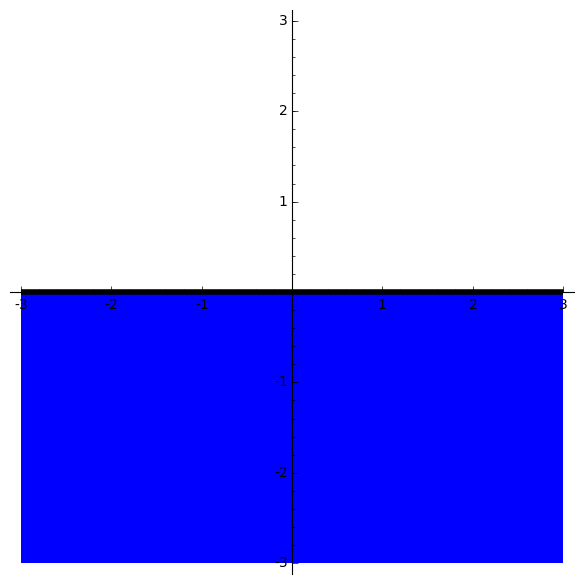}}\qquad
\subcaptionbox{Ride quality: $\Omega=\{0.6<|z|^2<1,\,-0.5<Im\,z<0.5,\,Re\,z<0\}$ \protect{\cite{GJ1981}}.}{\includegraphics[width = 2.1in]{./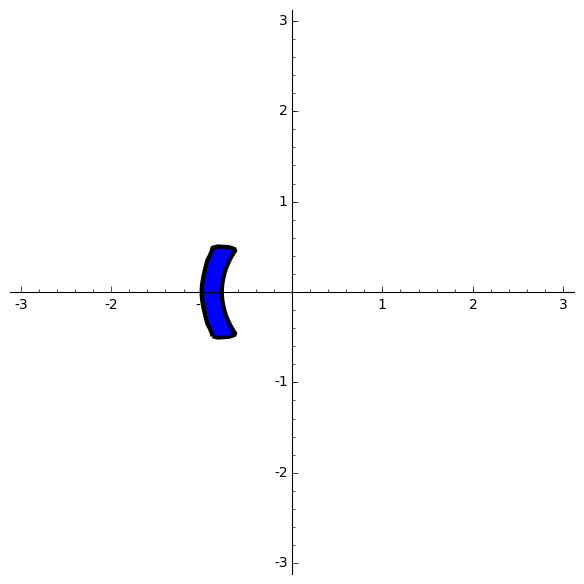}}
\phantomcaption
\end{figure}
\begin{figure}[H]
\ContinuedFloat
\subcaptionbox{Pole placement. $\Omega$ is finite.}	{\includegraphics[width = 2.1in,height = 2.1in]{./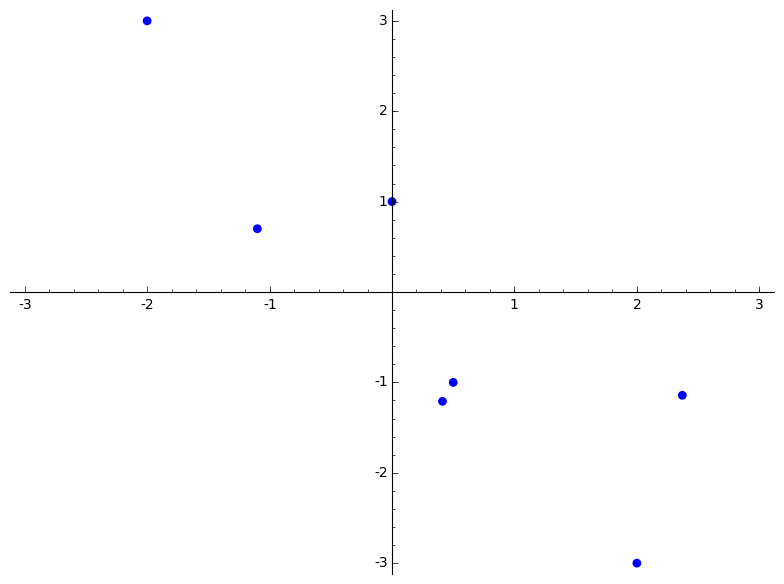}}\qquad
\subcaptionbox{Aperiodicity: $\Omega=\{z\in\R, z<0\}.$ \protect{\cite{NP1994}}}{\includegraphics[width = 2.1in,height = 2.1in]{./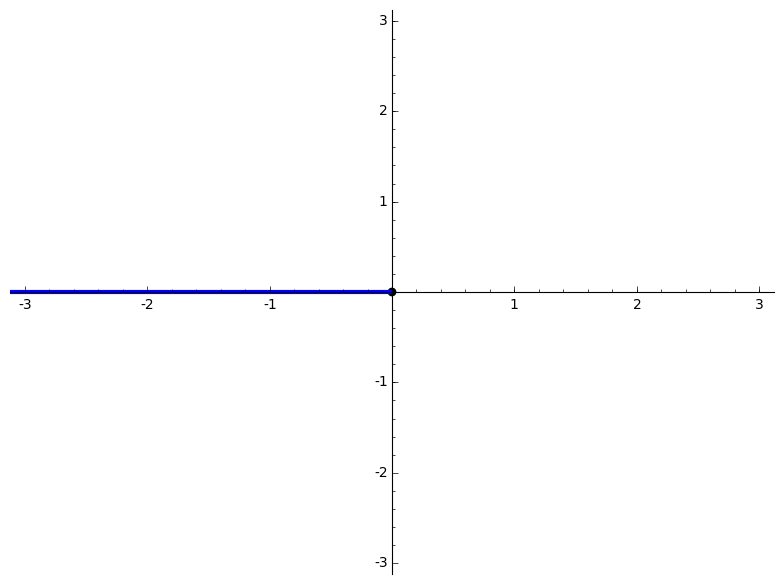}}\\
\subcaptionbox{Fenichel stability: $\Omega=\{Re\,z>0\vee Re\,z<0\}$ \protect{\cite{Fen1979}}.}{\includegraphics[width = 2.1in]{./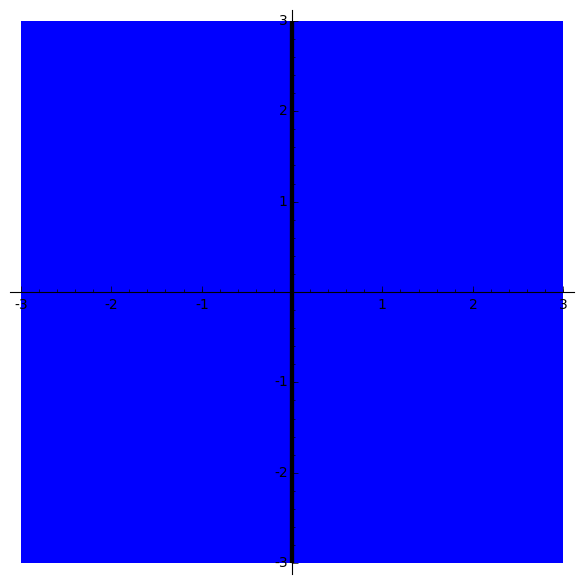}}\qquad
\subcaptionbox{Bounded frequency: $\Omega=\{-2<Im\,z<2\}$ \protect{\cite{GJ1981}}.}{\includegraphics[width = 2.1in]{./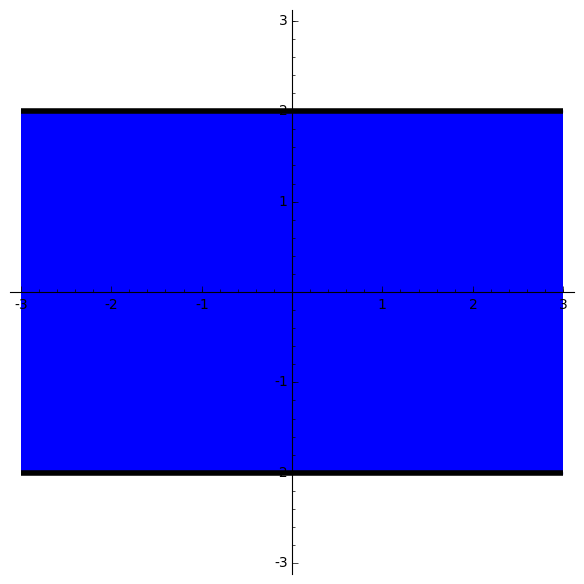}}
\caption{Different types of root clustering regions, connected with control problems. Stability region is blue, semi-stability region is black, unstable region is white.}
\label{stabreg}
\end{figure}

\begin{theorem}
Let stable region $\Omega$ be an open semialgebraic set. Then connected component of a	 $D$-stratum is homotopy equivalent to the direct product of skeleta of tori.

Here by $q$-skeleton of $n$-tori $T^n$ we mean the set $$\bigcup_{I\subseteq \{1,\ldots,n\}, |I|=q}\{(s_1,\ldots, s_n)\in T^n| \forall i\in I\,s_i=1\}.$$ 
Examples of skeleta could be seen on Fig. \ref{Bonesoftorus}.

Moreover, if $\Omega$ is convex and connected then each $D$-stratum is either contractible or homeomorphic to the product of euclidean space and disc bundle over circle.

Disc bundle is orientable if $D$-stratum contains polynomials with odd number of zeros on the border of $\Omega,$ and non-orientable if number of zeros on the border of $\Omega$ is even.
\end{theorem}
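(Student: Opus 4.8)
The plan is to pull the stratification back through the root--coefficient correspondence, realised as a symmetric product morphism, so that every question about a $D$-stratum becomes a question about symmetric products of the three pieces of $\CP^1$ cut out by $\Omega$. Write $\CP^1=\Omega\sqcup\partial\Omega\sqcup\Omega_u$ with $\Omega_u=\CP^1\setminus\overline{\Omega}$. The first step is to invoke the unmixing Lemmata \ref{unmixing} and \ref{unmixingconnected}: a polynomial with a prescribed number of roots in each piece corresponds to an unordered tuple that splits uniquely according to the location of its points, and this splitting is a semialgebraic isomorphism. Hence a $D$-stratum with root counts $(s,l,u)$ is
$$\mathrm{Sym}^s(\Omega)\times\mathrm{Sym}^l(\partial\Omega)\times\mathrm{Sym}^u(\Omega_u),$$
and, refining further over the connected components $\Omega=\bigsqcup_i\Omega_i$, $\partial\Omega=\bigsqcup_j C_j$, $\Omega_u=\bigsqcup_k U_k$, each connected component of the stratum is a product $\prod_i\mathrm{Sym}^{s_i}(\Omega_i)\times\prod_j\mathrm{Sym}^{l_j}(C_j)\times\prod_k\mathrm{Sym}^{u_k}(U_k)$ over a fixed distribution of the roots.

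For the homotopy statement I would compute one factor at a time, using that $\mathrm{Sym}^n(-)$ is a homotopy functor, so homotopy equivalences are preserved. Every connected piece appearing above is a semialgebraic set of dimension at most $2$ in the sphere: the open pieces $\Omega_i,U_k$ are connected open planar surfaces and the boundary pieces $C_j$ are connected $1$-dimensional semialgebraic sets, so each is homotopy equivalent to a finite wedge of circles $\bigvee_r S^1$. The core computation is then the identity (established from the cell structure of the symmetric product, or inductively along the inclusions $\mathrm{Sym}^{n-1}\hookrightarrow\mathrm{Sym}^n$) that $\mathrm{Sym}^n(\bigvee_r S^1)$ is homotopy equivalent to the union of the $n$-dimensional coordinate subtori of $T^r$ --- a skeleton of the type described in the statement --- the point being that at most $n$ of the $r$ circles can carry a root, so at most $n$ of the toral coordinates can be switched on. Taking the product over all components exhibits the connected component of the $D$-stratum as a direct product of skeleta of tori, which is the first assertion.

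For the convex case the three factors simplify up to homeomorphism rather than just homotopy. A connected, full-dimensional convex open $\Omega$ is homeomorphic to $\R^2\cong\CN$, and its complementary region $\Omega_u$ in $\CP^1$ is again an open topological disc; by functoriality of $\mathrm{Sym}^n$ under homeomorphisms together with the Vieta homeomorphism $\mathrm{Sym}^m(\CN)\cong\CN^m$, both $\mathrm{Sym}^s(\Omega)$ and $\mathrm{Sym}^u(\Omega_u)$ are Euclidean spaces. The boundary $\partial\Omega$ is a single Jordan curve in $\CP^1$, i.e. a topological circle, so the whole stratum is $\R^{2(s+u)}\times\mathrm{Sym}^l(S^1)$. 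Here I would quote the classical description (Morton) of $\mathrm{Sym}^l(S^1)$ as the total space of a $D^{l-1}$-bundle over $S^1$: if $l=0$ the middle factor is a point and the stratum is contractible, while for $l\ge 1$ it is Euclidean space times a disc bundle over the circle. The monodromy of this bundle is the cyclic permutation of the $l$ complementary arcs of a configuration, and this permutation is even exactly when $l$ is odd, so the bundle is orientable iff $l$ is odd and non-orientable iff $l$ is even, matching the parity of the number of boundary roots.

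The main obstacle is the skeleton computation $\mathrm{Sym}^n(\bigvee_r S^1)\simeq\mathrm{sk}_n(T^r)$: because the wedge is glued at the basepoint, the symmetric product is \emph{not} the product of the symmetric products of the individual circles, and one must control how the basepoint interacts across factors --- this is precisely why the unmixing arguments are phrased for semialgebraic subspaces of products of projective lines rather than for the ambient varieties. A secondary difficulty, for the homeomorphism refinement, is securing genuine homeomorphisms rather than mere homotopy equivalences: controlling the behaviour of $\Omega$, $\partial\Omega$ and $\Omega_u$ at the point at infinity (for unbounded convex regions such as half-planes, where $\partial\Omega$ only closes up to a circle after adding $\infty$), and pinning down the orientation character of $\mathrm{Sym}^l(S^1)$ exactly rather than up to homotopy.
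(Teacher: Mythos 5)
Your proposal follows essentially the same route as the paper: Lemmata \ref{unmixing}/\ref{unmixingconnected} to split the stratum into a product of symmetric products of the connected components of $\Omega$, $\partial\Omega$ and $\Omega_{un}$, reduction of each planar component to a bouquet of circles (the paper's Lemma \ref{setgraph}), Ong's computation of $\mathrm{Sym}^n(\vee_r S^1)$ as a skeleton of a torus for the homotopy statement, and Morton's theorem on $\mathrm{Sym}^l(S^1)$ as a $D^{l-1}$-bundle over $S^1$ with orientability governed by the parity of $l$ for the convex case (the paper's Proposition \ref{compactconnected}). The argument is correct and matches Theorem \ref{hott} and Proposition \ref{compactconnected} in both structure and the external results invoked.
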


Main ideas of the proof contains a correspondence between stability problems for the finite $\Omega$ and general position hyperplane arrangements noted for the first time by B.W. Ong, in a context of computing symmetri products of bouquets of circles \cite{Ong2003}. 

One can also compute Betti numbers of a $D$-stratum, in particular, number of connected components of a $D$-stratum.

\begin{theorem}
Let stable region $\Omega$ be open in its closure.

Denote as $b_s$ number of connected components of $\Omega,$ as $b_{ss}$\--- number of connected components of the border of $\Omega,$ as $b_{un}$ number of connected components of the complement to a closure of $\Omega.$
 
 Then the space of polynomials with $k$ roots in $\Omega,$ $l$ roots in $\partial\Omega,$ and $m$ roots in $\CN\setminus\Omega$ has 
 $${b_{s}+k-1\choose k}{b_{ss}+l-1\choose l}{b_{un}+m-1\choose m}$$ connected components.
\end{theorem}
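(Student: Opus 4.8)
The plan is to reduce the count to the combinatorics of distributing roots among the connected components of each of the three regions, exploiting that the connected components of a finite product are products of connected components. By the symmetrization construction of Theorem \ref{classicalpoly} together with the unmixing Lemmata \ref{unmixing} and \ref{unmixingconnected}, the $D$-stratum of polynomials with $k$ roots in $\Omega$, $l$ roots in $\partial\Omega$ and $m$ roots in $\CN\setminus\overline{\Omega}$ is identified, at least up to a bijection on $\pi_0$, with the product of symmetric products
$$
\mathrm{Sym}^k(\Omega)\times\mathrm{Sym}^l(\partial\Omega)\times\mathrm{Sym}^m(\CN\setminus\overline{\Omega}).
$$
The geometric content of the unmixing lemmata is precisely that, although the elementary symmetric functions entangle roots lying in different regions, the three groups of roots can nevertheless be separated region by region, since each root unambiguously belongs to exactly one of the three disjoint regions and no permutation mixes them; this introduces or destroys no connected components. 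As $\pi_0$ of a finite product is the product of the $\pi_0$'s of the factors, it suffices to count the connected components of a single symmetric product $\mathrm{Sym}^j(Y)$ of a semialgebraic set $Y$ with $b$ connected components, and then multiply the three resulting counts.

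First I would record the standard decomposition of a symmetric product over a disjoint union: writing $Y=Y_1\sqcup\cdots\sqcup Y_b$ for the decomposition into connected components, one has
$$
\mathrm{Sym}^j(Y)=\coprod_{j_1+\cdots+j_b=j}\mathrm{Sym}^{j_1}(Y_1)\times\cdots\times\mathrm{Sym}^{j_b}(Y_b),
$$
the disjoint union ranging over all tuples of nonnegative integers summing to $j$. Next I would argue that each summand is connected. Since $Y$ is semialgebraic, its connected components $Y_i$ are semialgebraically path-connected; hence each power $Y_i^{\,j_i}$ is path-connected, and so is its quotient $\mathrm{Sym}^{j_i}(Y_i)=Y_i^{\,j_i}/S_{j_i}$ (the case $j_i=0$ giving a single point). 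A finite product of path-connected spaces is path-connected, so every summand is a single connected component of $\mathrm{Sym}^j(Y)$. Consequently the number of connected components of $\mathrm{Sym}^j(Y)$ equals the number of tuples $(j_1,\ldots,j_b)$ of nonnegative integers with $j_1+\cdots+j_b=j$, which by the stars-and-bars count is $\binom{b+j-1}{j}$.

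To assemble the result I would apply this count to each of the three factors, taking $(j,b)$ equal to $(k,b_s)$, $(l,b_{ss})$ and $(m,b_{un})$ respectively; multiplying the three binomial coefficients yields the claimed formula. The hypothesis that $\Omega$ be open in its closure is exactly what guarantees that $\partial\Omega=\overline{\Omega}\setminus\Omega$ is a genuine closed semialgebraic stratum with a well-defined finite set of path-connected components, so that $b_{ss}$ is meaningful; this is why the weaker ``open in its closure'' condition suffices here, in contrast to the ``open'' hypothesis needed for the full homotopy-type statement. One may view this result as the $b_0$ specialization of the Betti-number computation that follows from the homotopy-type description, but the direct argument above is cleaner for $\pi_0$.

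The main obstacle I anticipate is the first step: establishing that the stratum is homeomorphic, or at least isomorphic on $\pi_0$, to the product of symmetric products. In the coefficient space the roots from the three regions are genuinely mixed by the symmetric functions, and the product description lives a priori only on the root side; transporting it through symmetrization is exactly the delicate transfer handled by Lemmata \ref{unmixing} and \ref{unmixingconnected}, which is also the reason one must work with abstract semialgebraic spaces rather than with a single ambient projective variety. Once that identification is in hand, the remaining steps are the formal and essentially combinatorial computation above. A secondary point to verify is that no roots escape to infinity: since $k+l+m$ equals the degree, all roots are finite and the point at infinity contributes no additional component, so the count performed in $\CN$ agrees with the one in the $\CP^1$ picture.
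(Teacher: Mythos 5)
Your argument is correct, and it reaches the formula by a more elementary final step than the paper does. You share with the paper the essential reduction: Proposition \ref{unmixing} (resp.\ its refinement \ref{unmixingconnected}, both resting on Lemma \ref{dispowertweak}) identifies the stratum with $\Omega_s^{(k)}\times\Omega_{ss}^{(l)}\times\Omega_{un}^{(m)}$, and your observation that the symmetric product of a disjoint union splits as a disjoint union of products of symmetric products of the components, each summand being path-connected, is exactly the content of Proposition \ref{unmixingconnected}. Where you diverge is in the counting: the paper obtains Proposition \ref{connectcomp} as the $u=0$ specialization of the full Betti-number computation (Theorem \ref{betti}), which goes through Macdonald's generating function $(1+xt)^{b_1}/(1-t)^{b_0}$ for the Poincar\'e polynomial of a symmetric product together with the K\"unneth formula; you instead count the summands directly by stars and bars, getting $\binom{b+j-1}{j}$ per region without any homology. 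Your route is cleaner and self-contained for $\pi_0$; the paper's buys the higher Betti numbers at the same time. One small correction: the hypothesis that each stratum be open in its closure is not merely there to make $b_{ss}$ well defined --- it is what makes Lemma \ref{dispower} work, i.e.\ it guarantees that the continuous bijection from the product of symmetric products onto the stratum is a homeomorphism (or at least that distinct summands land in separated pieces), without which the count of components in the coefficient space could a priori be smaller than the count on the root side. Since you correctly flag this transfer as the delicate step and cite the relevant lemmata for it, this is only a mischaracterization of motivation, not a gap.
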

General formulations of the result are contained in Theorem \ref{betti} and Proposition \ref{connectcomp}.

Moreover, one can describe adjacencies between $D$-strata.

\begin{theorem}
Let stability set $\Omega$ be a semialgebraic set. Denote as $Str(\Omega)$ set $\{s=\Omega,ss=\overline{\Omega}\setminus\Omega,un=\CP^1\setminus\overline{\Omega}\}.$
 
Let $\tau=(t_s,t_{ss},t_{un})$ and $\eta=(q_s,q_{ss},q_{un}),$ $\sum_{i\in Str(\Omega)}t_i=\sum_{i\in Str(\Omega)}^kq_i=m$ be $D$-strata. 
 Value of each component of an ordered triple equals to the number of roots situated in the corresponding region.
 Define $Adj(Str(\Omega))$ as $\{t=(t_1,t_2)\in Str(S)^2|\overline{t_1}\cap t_2\neq\varnothing\}$
 %Denote by $I\subset\{1,$
%  
 $\overline{\tau}\cap\eta\neq\varnothing$ iff there exists such a fa\-mi\-ly of na\-tu\-ral num\-bers $$\{\mu_t|t\in Adj(Str(\Omega))\}$$ that the fol\-lo\-wing sys\-tem of li\-near equa\-tions has a so\-lu\-ti\-on:
$$
\begin{array}{l}
\sum_{j\colon (j,i)\in E(Adj(Str(\Omega)))}\mu_{(j,i)}=e_i,\\
\sum_{j\colon (i,j)\in E(Adj(Str(\Omega)))}\mu_{(i,j)}=t_i,\qquad i\in\{1,2,3\}.
\end{array}
$$
\end{theorem}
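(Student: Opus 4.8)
The plan is to read the entire statement off the symmetric product picture of Theorem \ref{classicalpoly}. Fixing the total degree $m$, a $D$-stratum of the space of polynomials is exactly a stratum of the stratification of $\mathrm{Sym}^m(\CP^1)\cong\CP^m$ induced from $Str(\Omega)$: it is the image under the symmetrization morphism $\pi\colon(\CP^1)^m\to\mathrm{Sym}^m(\CP^1)$ of a product stratum
$$P_\tau=s^{\,t_s}\times ss^{\,t_{ss}}\times un^{\,t_{un}}$$
(after ordering the $m$ factors and passing to the $S_m$-orbit). The single property of $\pi$ that does the work is that it is a proper, hence closed, semialgebraic map, being a quotient of a compact space by the finite group $S_m$. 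Consequently $\overline{\tau}=\pi\bigl(\overline{P_\tau}\bigr)$, and since closure in a finite product is the product of closures, $\overline{P_\tau}=\overline{s}^{\,t_s}\times\overline{ss}^{\,t_{ss}}\times\overline{un}^{\,t_{un}}$.

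Next I would translate $\overline\tau\cap\eta\neq\varnothing$ into a coordinatewise matching condition. A point of $\eta$ is a multiset of $m$ points of $\CP^1$ with $q_s,q_{ss},q_{un}$ of them in $s,ss,un$; it lies in $\pi(\overline{P_\tau})$ iff its $m$ points can be matched to the $m$ labelled factors of $P_\tau$ so that the point matched to a factor of region $j$ lies in $\overline{j}$. A factor of region $j$ can receive a target point in region $i$ precisely when $\overline{j}\cap i\neq\varnothing$, i.e. when $(j,i)\in E(Adj(Str(\Omega)))$. Setting $\mu_{(j,i)}$ to be the number of region-$j$ factors matched into region $i$ gives nonnegative integers supported on the edges of the adjacency digraph; the number of factors leaving region $i$ is $t_i$ and the number arriving in region $i$ is $q_i$. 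These are exactly the two balance families in the statement (reading the right-hand side $e_i$ as the coordinate $q_i$ of $\eta$). Thus the theorem asserts precisely that such a matching exists iff this \emph{transportation system} is solvable over $\NN$.

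For necessity I would use the semialgebraic curve selection lemma rather than sequences: if $x\in\overline\tau\cap\eta$ there is a semialgebraic arc $\gamma\colon[0,\varepsilon)\to\overline\tau$ with $\gamma(0)=x$ and $\gamma(t)\in\tau$ for $t>0$. Using properness and finiteness of $\pi$, I lift $\gamma$ to an arc $\tilde\gamma$ in $(\CP^1)^m$ with $\tilde\gamma(t)\in P_\tau$ for $t>0$ and $\tilde\gamma(0)\in\overline{P_\tau}$, mapping onto $\gamma$. For each labelled factor the corresponding coordinate of $\tilde\gamma$ starts in some region $j$ and ends, at $t=0$, in a region $i\subseteq\overline{j}$, so $(j,i)$ is an edge; letting $\mu_{(j,i)}$ count the factors realising that transition yields an $\NN$-solution whose outgoing sums are the coordinates $t_i$ of $\tau$ and whose incoming sums are the region counts $q_i$ of $x\in\eta$. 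For sufficiency I reverse the construction: given an $\NN$-solution $\{\mu_{(j,i)}\}$, I pick for each edge $(j,i)$ a point $p_{ji}\in\overline{j}\cap i$ (nonempty by definition of the edge) together with a semialgebraic arc in $j$ landing at $p_{ji}$, assign these arcs to the factors with the prescribed multiplicities $\mu_{(j,i)}$ (coincidences among the $p_{ji}$ are harmless in $\mathrm{Sym}^m$), and symmetrise. For $t>0$ the region-$i$ count of the resulting configuration is $\sum_{j\colon(j,i)\in E}\mu_{(i,\cdot)}=t_i$ along the outgoing balance, so the configuration sits in $\tau$, while its limit has region counts $q_i$, so it sits in $\eta$; this exhibits a point of $\eta$ in $\pi(\overline{P_\tau})=\overline\tau$.

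The step needing the most care — and the reason the abstract semialgebraic spaces of Lemmata \ref{unmixing} and \ref{unmixingconnected} are the right setting — is the passage between the symmetric product and the labelled product. The relevant strata are not closed subvarieties but semialgebraic pieces on which $\pi$ may fail to extend, so both the arc lifting in necessity and the independent coordinatewise construction in sufficiency must be carried out inside the \emph{unmixed} model where $\pi$ decomposes as a product over the regions of $Str(\Omega)$. Granting this, the only remaining content is the purely combinatorial identification of a region-respecting matching of the $m$ points with an $\NN$-point of the transportation polytope, which is immediate once the edges are read as the ordered pairs $(j,i)$ with $\overline{j}\cap i\neq\varnothing$.
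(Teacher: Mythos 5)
Your proposal is correct and follows essentially the same route as the paper: the paper packages your topological step as Proposition \ref{spoweradj} (the adjacency digraph functor commutes with symmetric products, proved exactly via closedness of the finite-quotient map $\kappa_m$ and the identity $\overline{\prod Q_i}\cap\prod M_i=\prod(\overline{Q_i}\cap M_i)$), and then Theorem \ref{adjcrit} performs the same combinatorial identification of a region-respecting matching (the paper's ``sequence of $m$ pairs'' $w$) with an $\NN$-solution of the transportation system, with $e_i$ standing for $q_i$. Your curve-selection detour in the necessity direction is harmless but unnecessary: closedness of $\pi$ already gives $\overline{\tau}=\pi(\overline{P_\tau})$, so the matching can be read off a single preimage point rather than a lifted arc.
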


General formulation and proof one may find in Theorem \ref{adjcrit}.

Invariance under both different types of degree-changing perturbations alongside with condition for a root clustering theory to behave naturally on the real-coefficient polynomials allows to provide an explanation of the special position of classical stabilities: Hurwitz stability $\Omega=\{Re\, z<0\}$, Schur stability $\Omega=\{|z|<1\}$, hyperbolicity $\Omega=\{Im\, z=0\}$.

That theorem could be formulated as follows:
\begin{theorem}[Standard stability theories]
 Let $\Omega$ be a non-empty open se\-mi\-al\-ge\-bra\-ic set on $\CP^1.$
 Con\-si\-der a stra\-ti\-fi\-ca\-tion of $\CP^1$ into sets $\Omega$, $\partial\Omega$, $\CP^1\setminus\overline{\Omega}.$
 
Suppose that:
 \begin{enumerate}
  \item $\partial\Omega$ is an irreducible real algebraic curve without isolated points.%% without self-intersections,
  \item Inversion and complex conjugation are automorphisms of the stratified space. %self-dual
  \item $0$ and $\infty$ both lie on $\partial\Omega$ or they are in different strata.
 \end{enumerate}
 Then $\partial \Omega$ is one of the coordinate lines or a unit circle.
\end{theorem}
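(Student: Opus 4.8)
The plan is to read the three hypotheses as symmetry constraints and reduce the problem to a finite classification. Write $\kappa\colon z\mapsto\bar z$ and $\iota\colon z\mapsto 1/z$ for conjugation and inversion; these are commuting involutions of $\CP^1$ preserving the stratification, so with $\sigma=\iota\kappa\colon z\mapsto 1/\bar z$ (reflection in the unit circle) they generate a Klein four-group $G$ whose fixed loci are $\mathrm{Fix}(\kappa)=\R\cup\{\infty\}$, $\mathrm{Fix}(\sigma)=\{|z|=1\}$ and $\mathrm{Fix}(\iota)=\{\pm1\}$. Since $\partial\Omega$ is the only $1$-dimensional stratum, it is $G$-invariant. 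Invariance under $\kappa$ lets me take $\partial\Omega=V(P)$ with $P$ a real polynomial in $x,y$; as $\partial\Omega$ has no isolated points the ideal $(P)$ is real, so with $P$ irreducible the relation $V(P(x,-y))=V(P)$ forces $P(x,-y)=\pm P(x,y)$. In the odd case $y\mid P$, so by irreducibility $P$ is a multiple of $y$ and $\partial\Omega=\R\cup\{\infty\}$ is a coordinate line; I dispose of this immediately and keep the even case.

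Next I record the target of the classification. Among circlines $\alpha(x^2+y^2)+\beta x+\gamma y+\delta=0$, imposing $\kappa$- and $\iota$-invariance (each only up to a sign) pins down the coefficient vector and leaves exactly the real axis, the imaginary axis, the vertical lines $x=c$, the reciprocal circles $x^2+y^2+cx+1=0$, and the unit circle. Hypothesis 3 then culls this list: for a vertical line $x=c\neq 0$ inversion-invariance already fails, while for the reciprocal circles one checks that $0$ and $\infty$ both lie in the unbounded stratum and that $\iota$ preserves each side, so these are excluded; the survivors are precisely the two coordinate lines (with $0,\infty\in\partial\Omega$) and the unit circle (with $0$ inside, $\infty$ outside, hence in different strata). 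Thus \emph{if} $\partial\Omega$ is a circline the theorem follows.

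The crux is therefore to prove $\deg\partial\Omega\le 2$, and here hypothesis 3 is indispensable: conditions 1--2 alone are satisfied by higher-degree invariant curves, e.g. the Cassini ovals $|z^2-a|^2=a^2-1$, which for $a>1$ are irreducible $G$-invariant quartics without isolated points. To exploit hypothesis 3 I would pass to the quotient by $\iota$ through $\pi\colon z\mapsto w=z+1/z$, a double cover of the $w$-sphere branched over $w=\pm2$ (the images of $\pm1$), on which the residual symmetry is just $w\mapsto\bar w$. Inversion-invariance gives $\partial\Omega=\pi^{-1}(D)$ for a conjugation-symmetric real curve $D$ in the $w$-sphere, and equal multiplicities $m_0=m_\infty$ at the $\iota$-swapped points $0,\infty$. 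Hypothesis 3 becomes an incidence/separation statement for $D$ at $w=\infty$ (the common image of $0,\infty$) and the branch values $\pm2$: in the separated case it forces $\iota$ to exchange the two sides of $\partial\Omega$, so $\partial\Omega$ is a single Jordan curve through $\pm1$ and $D$ is a simple arc from $-2$ to $2$ with disc complement (the unit-circle pattern); in the incident case $D$ passes through $w=\infty$ (the coordinate-line pattern).

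The main obstacle is the final rigidity: showing that the only such $D$ compatible with irreducibility of $\pi^{-1}(D)$, the node forced at each branch point $z=\pm1$, the absence of isolated points, and the existence of a consistent two-colouring $\Omega\mid\CP^1\setminus\overline{\Omega}$ for which $\iota$ is a stratified automorphism, is the straight segment $[-2,2]$ (unit circle) or a coordinate axis of the $w$-line (coordinate lines) --- equivalently, ruling out the bulging arcs whose preimages are the Cassini-type quartics. I expect to close this by combining $m_0=m_\infty$ with a Bezout count of $\partial\Omega$ against the invariant circles $\R\cup\{\infty\}$ and $\{|z|=1\}$ and the separation datum of hypothesis 3 to force $\deg\partial\Omega\le 2$, after which the circline classification of the second paragraph applies. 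Converting the purely topological separation condition into the degree bound, and in particular controlling the nodes over $w=\pm2$ and the two-colourability, is the delicate step.
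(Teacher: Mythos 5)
Your setup is sound and your observation that hypotheses 1--2 alone are insufficient (the inversion- and conjugation-invariant Cassini quartics $|z|^4-a(z^2+\bar z^2)+1=0$) is a genuinely useful sanity check that the paper does not spell out. The easy branch ($P$ odd in $y$, hence $P=cy$ by irreducibility) is also fine. But the proof has a real gap exactly where you flag it: the claim $\deg\partial\Omega\le 2$ is never established. The ``Bezout count against $\R\cup\{\infty\}$ and $\{|z|=1\}$'' cannot work as stated, because Bezout only bounds the number of intersection points of $\partial\Omega$ with the unit circle by $2\deg f$, a bound that grows with $\deg f$; to conclude anything you would need to exhibit \emph{more} than $2\deg f$ intersection points, i.e.\ infinitely many, and nothing in your outline produces them. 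The passage to the quotient $w=z+1/z$ likewise ends with ``the delicate step'' unresolved: ruling out the bulging arcs whose preimages are Cassini-type quartics is precisely the content of the theorem, and it is left as an expectation rather than an argument.

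For comparison, the paper closes this step without any degree bound, by a rigidity argument along rays through the origin. Writing $f$ in polar coordinates, for each fixed direction $\varphi$ the univariate polynomial $r\mapsto f(r\cos\varphi,r\sin\varphi)$ has inversion-symmetric positive roots, so by the Markovsky--Rao decomposition its ``nonnegative real roots'' factor $h$ is palindromic or antipalindromic in $r$. A separate lemma (the sign-change argument via the partial derivatives $g_x,g_y$ and degree descent) converts hypothesis 3 into the statement that one may choose $f$ with opposite signs at $0$ and $\infty$; then $h$ is antipalindromic, hence divisible by $r-1$, so \emph{every} ray meets $\partial\Omega$ at $r=1$. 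That puts infinitely many points of the unit circle on the irreducible curve $\partial\Omega$, forcing $\partial\Omega=\{|z|=1\}$; the incident case ($0,\infty\in\partial\Omega$) forces $h$ to be a monomial in $r$, so the curve contains a full ray and irreducibility plus conjugation-invariance gives a coordinate line. If you want to salvage your outline, this ``infinitely many forced intersection points beat Bezout'' mechanism is the missing ingredient; without it, or some substitute for it, the proposal does not prove the theorem.
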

%\afterpage{\clearpage}
\begin{figure}[H]
\centering
\subcaptionbox{$T_1^6$ is the bouquet of $6$ circles}{ \begin{tikzpicture}[scale=0.8]
   \begin{polaraxis}[grid=none, axis lines=none]
     \addplot[mark=none,domain=0:360,samples=6500] { abs(cos(6*x/2))};
   \end{polaraxis}
 \end{tikzpicture}}\\
\subcaptionbox{Intersection poset for the coordinate subtori of $T_2^3.$ Each pair of $2$-dimensional tori intersects by different coordinate circle. All this circles are intersecting in a single point.}
{\begin{tikzpicture}
\node[inner sep=0pt] (lefttorus) at (0,6)
    {\includegraphics[width=.1\textwidth]{./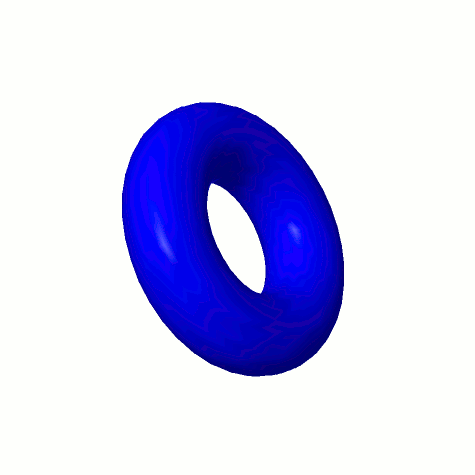}};
\node[inner sep=0pt] (centertorus) at (2,6)
    {\includegraphics[width=.1\textwidth]{./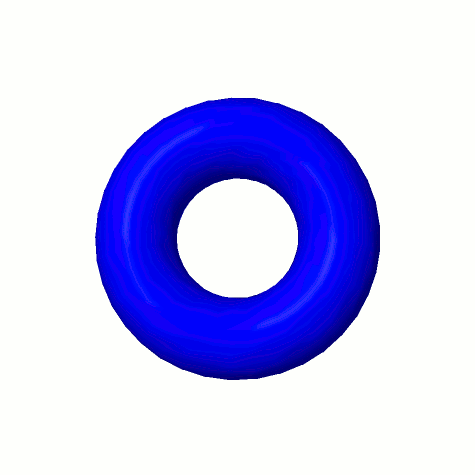}};
\node[inner sep=0pt] (righttorus) at (4,6)
    {\includegraphics[width=.1\textwidth]{./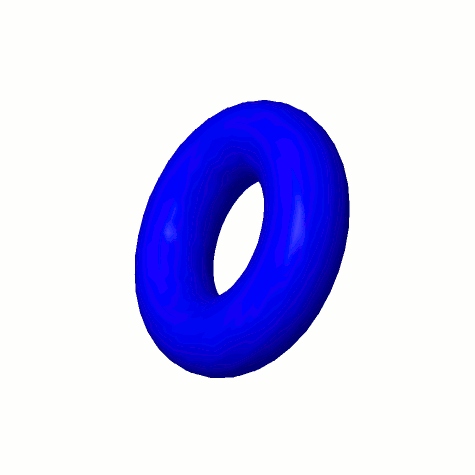}};
\node[inner sep=0pt] (leftcircle) at (1,3)
    {\includegraphics[width=.05\textwidth]{./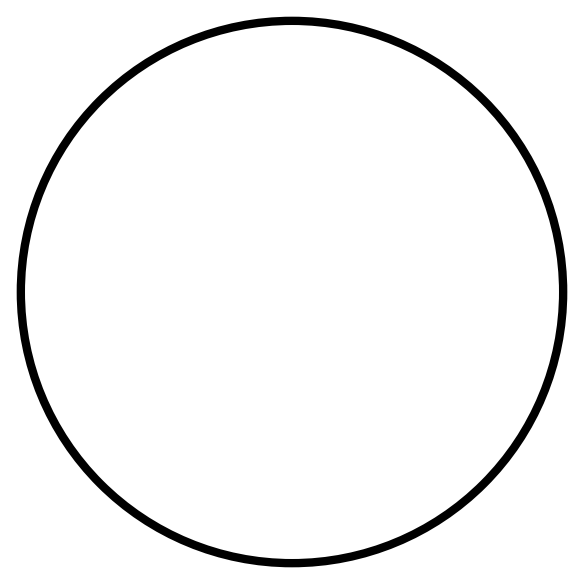}}; 
\node[inner sep=0pt] (rightcircle) at (3,3)
    {\includegraphics[width=.05\textwidth]{./figures/circle.png}}; 
\node[inner sep=0pt] (centercircle) at (2,2)
    {\includegraphics[width=.05\textwidth]{./figures/circle.png}};
\node[inner sep=0pt] (point) at (2,0)
     {{$\bullet$}};
\draw[-,thin] (lefttorus.south) -- (leftcircle.north);
\draw[-,thin] (centertorus.south) -- (leftcircle.north);
\draw[-,thin] (centertorus.south) -- (rightcircle.north);
\draw[-,thin] (righttorus.south) -- (rightcircle.north);
\draw[-,thin] (lefttorus.south) -- (centercircle.north);
\draw[-,thin] (righttorus.south) -- (centercircle.north);
\draw[-,thin] (leftcircle.south) -- (point.north);
\draw[-,thin] (rightcircle.south) -- (point.north);
\draw[-,thin] (centercircle.south) -- (point.north);
\end{tikzpicture}}\\
\caption{ Skeleta of tori.}
\label{Bonesoftorus}
\end{figure}

This is the Theorem \ref{standard}.

Some results about the structure of reducible borders of borders without condition on $0$ and $\infty$ are also proved.
Those results lead to the following question.

\begin{problem}
Let $G$ be a finite subgroup of a M\"obius group of fractional-linear transformations acting on $\CP^1.$

How to describe $G$-invariant irreducible real algebraic curves on $\CP^1$? 
\end{problem}

Despite that  topological properties of root clustering are those mainly studied there, in spite of future research and computability issues all considerations are made in a real algebraic category, and in cases, where it is inevitable, in a category of semialgebraic spaces. We are not discussong there such important things as borders and singularities of strata, their geometric and metric and convexity-like properties, algorithmic problems.  unconstrained parametric robust stability problems for concrete or general families non-indeterminate polynomials,

The paper is organised as follows. Second section is devoted to a historical review and references on different aspects of the questions involved. 
Third section is devoted to the discussion of various technical definitions and results needed for the development of theory. Stratified filtered real algebraic varieties, their semialgebraic subspaces and group actions on them are discussed there.

Fourth section is devoted to the introduction of different definitions needed for the study of stabilities. Namely, stability theories, which are pointed stratifications on the space of roots of linear polynomial ($\CN$ or $\CP^1$) are introduced in the definition \ref{theories}. Definitions \ref{affDstratif} and \ref{Dstratif} introduce the concept of $D$-stratification. Various refined version of $D$-stratification are discussed, as well as connections to the matrix stabilities and $D$-decomposition construction.

Theorem \ref{classicalpoly} gives a formulation of root-coefficient correspondence in terms of morphisms between filtered stratified real algebraic varieties needed for the study of stability theories.

Theorem \ref{matrixpoly} shows interaction of that correspondence with matrix stability problems via duality between two types of perturbations of polynomial. Finally, Theorem \ref{DdecDstrat}, shows connections between classical concept of $D$-decomposition and more refined $D$-stratification.

Section 5 is devoted to the study of the topological properties of $D$-strata. Topological structure of stratum for a wide class of definitions of stability is examined there. That class contains not only such classical examples as Hurwitz or Schur stability, but any union $LMI$-definable regions, any union of curvilinear polygons or real algebraic curves. Theorem \ref{hott} gives a homotopy type of a stratum and its fundamental group. Betti numbers of $D$-strata are computed in the Theorem \ref{betti}. Proof of these and other results are based on Proposition \ref{unmixing} which allows to reduce questions on the geometry and topology of $D$-strata to the geometry and topology of stability region, it's border and complement to it's closure. All process of these proofs could be done in a semialgebraic category and, therefore, in a constructive, computable way. That makes use of the semialgebraic topology important there.

Propositions \ref{compactconnected} and \ref{poleplacement} gives topological description of $2$ more important cases, namely, the case of connected convex region and the case when stability region is a finite number of points, respectively.

Section 6 describes adjacencies between $D$-strata. Adjacency digraph for a stratified space is defined in Definition \ref{adjdig}.  Each edge of that digraph corresponds to a non-separated pair of stratum. It is proved that the functor of taking adjacency digraph commutes with symmetric product functor (Theorem \ref{adjdigstab}). This leads to the criterium of adjacency between $D$-stratum(Theorem \ref{adjcrit}). Necessary and sufficient conditions for a digraph to be an adjacency graph for a stability theory are given in Propositions \ref{AdjS} and \ref{Adjunderline}.

Section 7 is devoted to the characterization of the most important stability theories in a class of all stability theories. It is proved that if the border of a stability region is an irreducible real algebraic curve, stability theory agrees with different degree-changing perturbations of a polynomial and with transfer to the theory of polynomials with real coefficients and if it is supposed that stability theory ``measures'' small and big roots (i.e. $0$ and $\infty$ cannot both be stable or both be unstable) then it is up to the interchange between stable or unstable regions or taking their union, one of just $3$ most classical theories: Hurwitz stability, Schur stability and hyperbolicity. This is the content of Theorem \ref{standard}. The proof is based on classification of palindromic polynomials by I. Markovsky and S. Rao \cite{MR2008} and high rigidity of the irreducibility condition.

Different possibilities of relaxing conditions are examined in Proposition \ref{orbit} and Proposition \ref{palind}, which leads to a questions on the structure of finite M\"obius group actions.

Paper makes use of different standard notions and results of real algebraic geometry, algebraic topology and category theory. Books by S. Basu, R.Pollack, M.-F. Roy \cite{BPR1998}, A. Hatcher \cite{Hat2002} and  S. MacLane \cite{McL1998}, could be seen as introductions to the subjects, respectively.

I wish to thank Prof. Alexander Engstr\"om, Dr. Cordian Riener, Konstantin Tolmachov, Dmitry Korshunov, Prof. Niels Schwarz, Prof. Manfred Knebusch, Prof. Claus Scheiderer, Eric Wofsey and many other people for useful discussions on the subject. Moreover, i wish to thank prof. Alexander Engstr\"om and dr. Cordian Riener for their hospitality during my quite unusual long-term visit in Aalto university in which part of that work have been done; prof. Markus Schweighofer and Zukunftskolleg staff for their help during my work in University of Konstanz.

I wish also to thank Jenny Curpen and, my mother, Svetlana Gofman for their support.

\section{Historical review}
First root clustering problems ever considered, were probably the problems close to {\it hyperbolicity} , concerning number of (may be, positive or negative) real roots of a polynomial. These problems have appeared already in La G\'eom\'etrie by R. Descartes from 1637, where Descartes rule of signs have been established. Another root clustering problem \--- Hurwitz stability problem, concerning the case of root clustering region being a left half-plane have been studied since the middle of 19-th century, when the first papers in control theory, such as works on stability by J.C. Maxwell \cite{Max1868} and I. Vyshnegradsky have appeared  \cite{Vish1876}. 

Another root clustering problem \---  Schur stability problem, where root clustering region is an open unit-disk have been studied since the work of I.Schur from 1917 \cite{Sch1917}.

Other special root clustering problems, such as sectoral conditions \cite{PS2002} and other sectoral stability regions, unions of disjoint disks \cite{AY1997}, plane curves \cite{Maz1999}, Cassini ovals \cite{MD2014} and many other problems have been considered in different algebraic and algorithmic aspects.

General theory of root clustering  has been developed by E. Jury and S. Gutman since the beginning of 1980's (see \cite{GJ1981} for formulation of the foundations and book \cite{Gut1990}). That theory is based on the study of regions that could be transformed into a Hurwitz stability region by certain polynomial transformations. The explanation of a potential fruitfulness of that transformation-style approach to the study of root clustering problems, could possibly be explained by the recent results of D. Chakrabarti and S.Gorai \cite{CG2015} who proved, in our terms, that any proper holomorphic map between stability regions for an indeterminate polynomials with complex coefficients is induced by proper holomorphic map between clustering regions.

Different algebraic conditions and algorithms for the root and eigenvalue localisation in  $LMI$-regions(i.e. rigidly convex ones \cite{HV2007}) \cite{CG1996,CGA2002}, Ellipsoidal Matrix Inequality Regions ($EMI$-regions) \cite{Peauc2000,PABB2000}, Extended Ellipsoidal Matrix Inequality Regions ($EEMI$-regions) \cite{BM2003}, Boolean combinations of different regions\cite{BHPM2004,BBM2005} have been established since the middle of 1990's.. 

Another notable contribution to root clustering problems was made by J.B. Lasserre in 2004 \cite{Las2004}. He established a criterium for roots of polynomial to be situated in certain semialgebraic region of a complex plane. Lasserre criterium is formulated in terms of moment matrices.

Geometrical ideas in root clustering have appeared for the first time via the parametric approach to robust stability problems, which constitutes an important part of control theory. 

First studies in that line of research could be attributed to I. Vysh\-ne\-grad\-ski.\cite{Vish1876}, who considered different affine sections for the of the spaces of lower-degree real indeterminate polynomials. These ideas have been developed by R.A. Fraser and W.J. Duncan in 1929 \cite{FD1929}. Fundamental ideas of that approach\-- method of $D$-decomposition in have been developed in the end of 1940's in a book by Yu.I. Neimark \cite{Nei1949}. He studied decomposition of a parameter space for a given unconstrained robust stability problem into regions the with the same number of Hurwitz stable and Hurwitz unstable roots. 

Further development of that method was pursued by D. Mitrovic \cite{Mit1959}, D.Siljak \cite{Sil1969}, et al. Higher-dime\-nsional $D$-de\-com\-po\-si\-tion have been studied in the book by S.H.Lehnigk \cite{Leh1966}. In the beginning of 1990's that method became a standard part of any book on parametric approach to robust control \cite{Ack2002,Bar1993,BKC1995}.  

Despite that, knowledge about geometric aspects of $D$-decomposition and geometric structure of a stability region is very limited even for the most important cases, such as $PI$ and $PID$-controllers. Among these results should be noted the theorem from 1998 proved by M.-T. Ho, A. Datta and S.P. Bhattacharyya \cite{HDB1998,HDB2000}, generalised in 2003 by J. Ackermann and D.Kaesbauer \cite{AK2003} which states that stability region of $PID$-controller synthesis problem with fixed proportional gain consists from disjoint union of convex polygons. But, as noted by D.Henrion and M. Sebek \cite{HS2008} even the number of stability regions for $PI$ or $PID$-controller synthesis problem is not known.

Geometric methods for the study of $1$ and $2$-dimensional $D$-decomposition problems, including some estimates on the number of regions of $D$-de\-com\-po\-si\-tion have been developed by E.N. Gryazina and B.T. Polyak in the middle of the 2000's \cite{G2004,GP2006}, their review (together with A.A. Tremba) \cite{GPT2008} provides a good exposition of state-of-the-art $D$-decomposition theory at that time. Around the same time Yu.P.Nikolayev \cite{Nik2002,Nik2004} built several highly non-trivial examples, showing possible complexity of a $D$-decomposition structure even for the $2$-dimensional case. During  2012-2014 \cite{Vas2012a,Vas2012b,Vas2014} author introduced several new tools. Applications of topology of real algebraic varieties and computational real algebraic geometry to the problem gave possibility to estimate the number of regions in an arbitrary dimension and provide a new class of algorithms for the study of $D$-decomposition.

Another set of geometric results could be attributed to the tradition of so-called universal parameter space methods, appeared since the paper by A. Fam and J. Meditch from 1978 \cite{FM1978}.  These results corresponds to the algebro-geometrical and topological study of the space of real indeterminate  (Hurwitz or Schur) stable polynomials from the control-theoretic vewpoint. Among other researchers thinking along that line one can note J.Ackermann \cite{Ack1980}, H.Akyar\cite{Aky2009}, B. Aguirre-Hernandez et al. \cite{AHFAV2012},  G.-R. Duan and M.-Z. Wang \cite{DW1994,DW1996}.

Another important result is G.Meisma's \cite{Mei1995} elementary geometric proof of classic Routh-Hurwitz stability criterium, not using any advanced methods of complex analysis, as in earlier proofs, but only a continuity of dependence of coefficients of polynomials from their roots.  That kind of approach is very similar to the classical approach to the geometry of the $D$-decomposition by Yu.I. Neimark \cite{Nei1949}, where he used quite informal approach of ``movement'' of roots around the complex plane with their ``births'' and ``deaths'' at $\infty.$  Close set of ideas is pursued in a completely algebraic approach to the Routh-Hurwitz theory have been presented in \cite{BPC2006}.

So the {\it root-coefficient correspondence} \--- mapping from the space of roots of a polynomial to it's coefficients appears on the scene.

These results gives hope for a {\it purely geometric} approach to the root clustering and possibility of creating purely geometric theory for unconstrained robust root clustering problems. 
That hope, could of course, be supported by successful use of algebro-geometric methods in closely related area of control, namely, pole placement theory \-- which is consists in clustering eigenvalues of certain affine families of matrices in a prescribed finite subset of a complex plane (see \cite{EG2002} and references therein).

Approach presented here is an attempt to build an unified geometric theory behind unconstrained robust root clustering problems, based on several standard natural geometric constructions instead of different algorithmic approximations and ad-hoc methods. 

In order to discuss that further on, we need to introduce several new actors playing inside a history of proposed concept. 
First of them is an interpretation of the root-coefficient correspondence as a {\it symmetric product} map, i.e. map transforming ordered sets of roots into unordered sets of polynomial coefficients.
As this map is given, essentially, by elementary symmetric polynomials, it is possible to trace it back to the F.Vi\`ete and A.Girard from 16th-17th century(see \cite{Fun1930} for the history of a question), but the concept of symmetric product of topological spaces have been for the first time introduced only at 1931 by K.Borsuk and S.Ulam \cite{BU1931}. Their definition is different from the most common contemporary definition, while the latter one have been introduced first in the series of papers by M.Richardson and P.A. Smith from 1930's\cite{Ric1935,RS1938,Smi1932}. Interpretation of correspondence between roots and coefficients of a polynomial as a symmetric product of topological spaces have been used at least since V.I. Arnold papers on algebraic version of 13th Hilbert Problem \cite{Arn1970}, but only in 2012 B. Aguirre-Hernandez, J.L. Cisneros-Molina, M.-E. Frias-Armenta \cite{AHCMFA2012} introduced that interpretation into the domain of control theory and stability problems. Symmetric product construction have been used there in order to show that the spaces of aperiodic monic Schur or Hurwitz polynomials with real coefficients are contractible.

Another important player in our exposition is different {\it stratifications of the space of polynomial coefficients} induced by the structure of the root spaces. Although our main goal is to develop theory of the stratifications for the root clustering problems, historically main attention have been pointed to the stratification of the space of indeterminate polynomials produced by the multiplicities of roots. That subject known under different names (one of the best-known of them is {\it coincident root loci}), although been in an attention since the paper of D.Hilbert from 1887 \cite{Hilb1887} still produces new questions and results in algebraic geometry and singularity theory \cite{Kh1987,KS1990,Chi2003,Chi2004,Kat2003,Kur2012,SL2016,Mikh2015,Nap1998}. Computations of homologies for different strata of multiplicity stratification of the space of polynomials \cite{Arn1970,Arn1989,GKY1999,Kam2008,SW1996} motivated, again, by V.I. Arnold ideas and connected with an applications of general theory of topology of complements to discriminants created in the beginning of 1990's by V.A. Vassiliev \cite{VVas1994} constitutes another important topic.

From the other side singularities of stability borders, especially in connection with hyperbolic polynomials have been studied by V.I. Arnol'd, B.Shapiro, A.D. Vainstein \cite{Arn1972,Arn1986,VS1990}. As noted to author by O.N. Kirillov, some of the singularity-theoretic phenomena arising there have been discovered earlier by physicist O. Bottema \cite{Bot1956}.
For the case of Hurwitz polynomials singularities of the border have been studied by L.V. Levantovskii\cite{Lev1982}. The book by V.P. Kostov \cite{Kos2011} contains a study of stratification of the space of hyperbolic polynomials given by different multiplicities of roots. Book by A.P. Seyranian, A.A. Mailybaev\cite{SM2004} and later papers by O.N. Kirillov \cite{HK2010,KO2013} presents an applied view on singularities of the stability border. Several results \cite{Arn1986,Che1986,DG1994,Meg1992,Obr1963} is known about the convexity-like properties of the space of hyperbolic polynomials. From the other perspective geometric and convexity-like properties of Schur stability region(known also as a symmetrized polydisc) have seen a considerable attention from complex analysts(e.g. \cite{EZ2005,Cos2005,Nik2006,NPZ2008,NTT2016}), having in mind both its importance for geometric function theory and connections with $\mu$-synthesis problems. 

During the last years D.Chakrabarti et al. are studying geometry of symmetric products of regions of complex plane from the complex-analytic point of view.\cite{CCGL2014,CG2015}.  

Finally, J.Borcea and B.Shapiro \cite{BS2004}  classified  stratifications of $1$-dimensional affine families induced by the multiplicity stratification on the space of polynomials with real coefficients. Analogous results for stratifications induced by stabilities and higher dimension of a family, could be seen among the ultimate goals of our approach.

\section{Prerequisites: stratified filtered real algebraic varieties and symmetric products}
Here we introduce basic tools for the study of the stability problems. Filtered real algebraic varieties and their semialgebraic subsets defined below are used to capture the natural structure of filtration of the space of polynomials and binary forms by degree, matrices filtered by size, sequences of roots filtered by their length.
\begin{definition}
A {\it filtered real algebraic variety} $L=\{(L_i,\lambda_i),i\in\N\}$ is a (possibly infinite) sequence of closed embeddings of real algebraic varieties $$L_0\overset{\lambda_0}{\rightarrow} L_1\overset{\lambda_1}{\rightarrow} \ldots.$$

Morphism between filtered real algebraic varieties $\varphi\colon L\to R$ is a sequence of morphisms $\varphi_i\colon L_i\to R_i$ that commutes with embeddings.
\end{definition}

That definition is parallel to the I.R. Shafarevich's definition of an in\-fi\-ni\-te-di\-men\-sio\-nal algebraic variety \cite{Sha1981}.\footnote{Since in that paper we are going to limit ourselves with bounded-dimensional considerations we do not need here any more abstract formalism for working with semialgebraic sets and real algebraic varieties in essentially in\-fi\-ni\-te-di\-men\-sio\-nal setting that could be provided by a development of $ind$-scheme and $ind$-group theory for the category of N.Schwarz (inverse) real closed spaces \cite{Sch1989,Sch1991} or any other formalism in semialgebraic geometry, which constitutes an open problem.

One of the motivation for that open problem is that the stability problem is often an affine subspace in the space of all polynomials. Consider the space of all stability problems with fixed ``structure of the controller''. It could be seen as is a Grassmann variety, filtered up to infinite dimension. That G variety is once again a stratified into ($ind$-)semialgebraic subsets representing stability problems of fixed topological structure of their stratified parameter space. Geometry (including real algebraic and metric geometry) of these subsets actually is an exact geometry of robustness for parametric stability problems. That line of thought on robustness is parallel to the ideas of algebraic statistics.}

As an illustrative definition for an abstract semialgebraic set we can take the following definition:
\begin{definition}
Let $L$ be a real algebraic variety and let $\varphi\colon L\to \R^k$ be an embedding of $L$ into real affine space. $S$ is {\it a semialgebraic subset} of $L$ if $\varphi(S)$ is semialgebraic.
\end{definition}
From results \cite{DK1981} it is easy to see that:
\begin{lemma}
If $S\subset L$ is a semialgebraic subset with respect to embedding $\varphi$, then it is semialgebraic subset with respect to any embedding.
\end{lemma}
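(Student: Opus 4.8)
The plan is to reduce the independence statement to the Tarski--Seidenberg theorem by showing that the passage from one embedding to another is effected by a semialgebraic map. Fix two closed embeddings $\varphi\colon L\to\R^k$ and $\psi\colon L\to\R^m$, and suppose $\varphi(S)$ is semialgebraic; I must show that $\psi(S)$ is semialgebraic as well, which---since $\varphi$ and $\psi$ are arbitrary---yields the claim.

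First I would identify $L$ with its image $\varphi(L)$. Because $\varphi$ is a closed embedding of real algebraic varieties, it is an isomorphism of $L$ onto the closed subvariety $\varphi(L)\subseteq\R^k$; in particular $\varphi(L)$ is a real algebraic, hence semialgebraic, set, and under the identification $L\cong\varphi(L)$ the subset $S$ corresponds exactly to $\varphi(S)$, which is semialgebraic by hypothesis.

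Next I would analyse the transition map $h:=\psi\circ\varphi^{-1}\colon\varphi(L)\to\psi(L)$. This is a morphism of real algebraic varieties (the inverse of a closed embedding onto its image is again regular), so each of its $m$ components is a regular function on the affine real algebraic variety $\varphi(L)$. The point to invoke from \cite{DK1981} is that regular functions on affine real algebraic varieties are semialgebraic; consequently $h$ is a semialgebraic map, and $\psi(S)=h(\varphi(S))$. Applying the Tarski--Seidenberg projection theorem to the graph of $h$ shows that the image of a semialgebraic set under a semialgebraic map is semialgebraic, so $\psi(S)=h(\varphi(S))$ is semialgebraic.

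The only delicate point, and the one genuinely resting on \cite{DK1981}, is the assertion in the third step that a regular morphism of affine real algebraic varieties is a semialgebraic map, so that the change-of-embedding map transports semialgebraic sets to semialgebraic sets; everything else is formal. I expect this to be the main obstacle to a fully self-contained treatment, since it is exactly the compatibility between the algebraic and the semialgebraic structures that makes the notion of semialgebraic subset of an abstract real algebraic variety well defined.
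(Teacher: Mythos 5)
Your argument is correct, and it is essentially the argument the paper intends: the paper gives no proof beyond citing Delfs--Knebusch, and the standard content of that citation is exactly your chain (the transition morphism $\psi\circ\varphi^{-1}$ between the two affine models is regular, regular maps of affine real algebraic varieties are semialgebraic, and images of semialgebraic sets under semialgebraic maps are semialgebraic by Tarski--Seidenberg). No gap; your version is simply the citation made explicit.
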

For the thorough study on abstract semialgebraic sets (i.e. se\-mi\-al\-ge\-bra\-ic spa\-ces) author refers reader to the sequence of books and papers by H.Delfs and M.Knebusch \cite{DK1981a,DK1981,DK1985,Kne1989,Del1991}. However we are trying to evade use of that language there as much as possible and most of the paper could be understood without it.
\begin{definition}
 Let $L$ be a filtered real algebraic variety. A sequence of semialgebraic subsets $S_0\subseteq S_1\subseteq\ldots;\quad$  $S_i\subseteq L_i$ is called {\it a filtered semialgebraic subset of $L.$}
\end{definition}

\begin{definition}
 Pair $(L,S),$ $L=\sqcup_{s\in S}S,$ where $L$ is real algebraic variety and $S$ is a set of semialgebraic subsets of $L$ is called {\it stratified real algebraic variety.}
\end{definition}
\begin{definition}
 Let $L$ be a filtered real algebraic variety, and let all $L_i$ be equipped with such a stratification $S_i$ that $$\forall s\in S_i\quad\lambda_i(s)\subset \widetilde{s}\in S_{i+1},\qquad \widetilde{s}\cap\lambda_i(S_i)=\lambda_i(s)$$ then $L$ is {\it filtered stratified real algebraic variety.} A filtered real algebraic variety could be seen as a stratified filtered real algebraic variety with trivial stratification.

 Let $(L,S)=\{(L_i,\lambda_i,S_i), i\in\N\},(T,Q)=\{(T_i,\tau_i,Q_i), i\in\N\}$ be filtered stratified real algebraic varieties. The sequence of morphisms $\varphi=\{\varphi_i\colon L_i\to T_i, i\in\N\}$ is the {\it morphism of filtered stratified real algebraic varieties} if for each $s\in S_i$ there exists $q\in Q_i$ such that $\varphi(s)\subseteq q,$ for each $s,t\in S_i$ $\varphi(s)=\varphi(t)$ or $\varphi(s)\cap\varphi(t)=\varnothing$ and $\forall i\in\N\quad \varphi_i\circ\tau_i=\lambda_i\circ\varphi_{i+1}.$
 
 If $(L,S)$ is a filtered stratified real algebraic variety, then there exist a canonical forgetful morphism of real stratified algebraic varieties $\lambda^{id}=\{id_{L_i}\}.$
\end{definition}

\begin{definition}
 Let $G_0\subseteq G_1\subseteq \ldots = G$ be a filtered algebraic group(i.e. filtration of algebraic groups by sequence of closed embeddings) and let $(L,S)$ be a filtered (stratified) real algebraic variety.
 
Define {\it a filtered action $\gamma$ of $G$ on $L$} as sequence of actions $\gamma_i\colon G_i\to Aut(L_i)$ $G_i$ on $L_i$ that commutes with embeddings.

$\gamma$ {\it respects stratification $S$} if for each $g\in G_i$ and each $s\in S_i$ there exists $\widetilde{s}\in S_i$ such that  $\gamma_i(g)s=\widetilde{s}.$
\end{definition}
\begin{proposition}\label{infprod}
Let $(R,S)$ be a stratified real algebraic variety with marked point $e.$ Sequence of morphisms  
$$R^0=\{e\}\overset{\varphi_0}{\to}R\overset{\varphi_1}{\to}R^2\to\ldots,\quad\varphi_i\colon (r_1,\ldots,r_i)\mapsto (r_1,\ldots r_i,e)$$ with stratifications produced by componentwise products of $S$-stratum is a  filtered stratified algebraic variety $(R,S)^{\infty}$ \-- {\it infinite product} of $(R,S).$
\end{proposition}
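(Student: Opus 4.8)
The plan is to verify directly that the sequence $(R,S)^\infty$ meets the definition of a filtered stratified real algebraic variety, since the assertion is precisely that the product construction is compatible with the degree filtration obtained by appending the marked point. Three things must be checked: that the underlying sequence $\{(R^i,\varphi_i)\}$ is a filtered real algebraic variety, that the componentwise products furnish genuine stratifications $S^i$ of each $R^i$, and that the nesting condition relating $S^i$ to $S^{i+1}$ across $\varphi_i$ holds.

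First I would observe that each $R^i$ is a real algebraic variety, being a finite product of copies of $R$, and that $\varphi_i\colon (r_1,\ldots,r_i)\mapsto (r_1,\ldots,r_i,e)$ is a closed embedding: it is an isomorphism of $R^i$ onto the slice $R^i\times\{e\}\subseteq R^i\times R=R^{i+1}$, which is Zariski closed because the marked point $e$ is a closed point of $R$. Hence $\{(R^i,\varphi_i)\}$ is a filtered real algebraic variety, with the $\varphi_i$ playing the role of the embeddings $\lambda_i$ in the definition.

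Next I would describe the stratification. Writing $S=\{s_\alpha\}$ for the strata of $R$, the componentwise product $S^i$ of $R^i$ consists of the sets $s_{\alpha_1}\times\cdots\times s_{\alpha_i}$; these are semialgebraic, being finite products of semialgebraic sets, and they partition $R^i$ because $S$ partitions $R$. The essential step is the nesting condition. Let $s=s_{\alpha_1}\times\cdots\times s_{\alpha_i}\in S^i$ and let $s_e\in S$ be the unique stratum containing $e$; set $\widetilde s=s_{\alpha_1}\times\cdots\times s_{\alpha_i}\times s_e\in S^{i+1}$. Then $\varphi_i(s)=s_{\alpha_1}\times\cdots\times s_{\alpha_i}\times\{e\}\subseteq\widetilde s$ since $e\in s_e$, and intersecting with the image $\varphi_i(R^i)=R^i\times\{e\}$ gives
$$\widetilde s\cap\bigl(R^i\times\{e\}\bigr)=s_{\alpha_1}\times\cdots\times s_{\alpha_i}\times\bigl(s_e\cap\{e\}\bigr)=s_{\alpha_1}\times\cdots\times s_{\alpha_i}\times\{e\}=\varphi_i(s),$$
using $s_e\cap\{e\}=\{e\}$. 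This is exactly the condition $\varphi_i(s)\subset\widetilde s$ with $\widetilde s\cap\varphi_i(R^i)=\varphi_i(s)$, so $(R,S)^\infty$ is a filtered stratified real algebraic variety.

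The only genuine subtlety, and the step I would flag as the crux, is the choice of the target stratum $\widetilde s$ together with the verification that $\widetilde s\cap\varphi_i(R^i)$ equals $\varphi_i(s)$ and nothing larger. This rests entirely on the disjointness of the strata of $R$ — so that no stratum other than $s_e$ meets $e$ — and on the fact that a \emph{marked} point singles out a single stratum $s_e$; this is exactly why the construction is stated for a pointed stratified variety. Everything else is formal bookkeeping with products of semialgebraic sets, and the whole argument stays inside the semialgebraic category as required.
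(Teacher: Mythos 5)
Your proof is correct and follows essentially the same route as the paper's: pick the unique stratum $\widehat{s}\in S$ containing $e$, observe $\varphi_i(s_1\times\cdots\times s_i)\subseteq s_1\times\cdots\times s_i\times\widehat{s}$, and check that this target stratum meets $\varphi_i(R^i)$ in exactly $s_1\times\cdots\times s_i\times\{e\}$. You simply spell out the closed-embedding and partition bookkeeping that the paper leaves implicit.
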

\begin{proof}
 Take such $\widehat{s}\in S$ that $e\in S.$ Then $\varphi_i(s_1\times\ldots\times s_i)\subseteq s_1\times\ldots\times s_i\times\widehat{s}.$ Moreover $s_1\times\ldots\times s_i\times\widehat{s}\cap\varphi_i(R^i)=s_1\times\ldots\times s_i\times\{e\}.$
\end{proof}
Now we are able to define spaces and groups that will be main players in our exposition.
\begin{example}
\begin{enumerate}
\item $U_{0}\subset U_1\subset\ldots \subset U_{i}\subset$ is a  filtered space $U$ of all polynomials with complex coefficients. Here $U_i$ is a $(2i+2)$-dimensional space of polynomials degree less than $i$ with embeddings given by $x\mapsto (x,0+0i).$ It could be also interpreted as space of all homogeneous binary forms $f(x,y)$ with embeddings given by $f(x,y)\mapsto f(x,y)y.$ 
\item $V_{0}\subset V_1\subset\ldots \subset V_{i}\subset$ is a filtered space $V$ of all monic polynomials with complex coefficients. Here $V_i$ is a $2i$-dimensional space of polynomials degree less than $i$ with embeddings given by $x\mapsto (0+0i,x).$
\item $\CN^{\infty}$ \-- filtered space of complex sequences with finite number of non-zero elements (isomorphic to $V$);
\item $(\CP^1)^{\infty}$ \-- filtered space of finite sequences of points from $\CP^1$ with finite number of non-infinity elements;
\item $\CP^{\infty}$\-- is a filtered real algebraic variety givean by sequence of morphisms that could be written in complex homogeneous coordinates as $[x_0:\ldots:x_k]\mapsto[x_0:\ldots:x_k:0]$
\item $Mat(\CN,\infty)$ \-- filtered space of square matrices with finite number of non-zero entries;
\item $Gl(\CN,\infty)$ \-- filtered algebraic group of invertible transformations of $\CN^{\infty};$
\item $\Sigma^{\infty}$ \-- infinite symmetric group (permutations with finite number of non-stable points);
\end{enumerate}
\end{example}

In what follows we study group actions on filtered stratified real algebraic varieties and define a symmetric product construction.
\begin{definition}
Let $R$ be a semialgebraic space. Let $\Sigma_n$ be a symmetric group acting on $R^n$ by permutations of coordinates. 
Then $n$-th symmetric product of a semialgebraic space $R$ is a quotient of $R^n$ by an action of symmetric group $\Sigma_n.$ It is denoted by $R^{(n)}.$
\end{definition}

\begin{proposition}\label{semialg}
$n$-th symmetric product $R^{(n)}$ of semialgebraic space $R$ is a semialgebraic space. Points  of $R^{(n)}$ could be identified with cardinality $n$ multisubsets of $R.$
\end{proposition}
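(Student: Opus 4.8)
The plan is to reduce to the affine situation, realise the quotient $R^{(n)}$ as the image of $R^n$ under the vector of generators of the ring of invariants, and use the finiteness of $\Sigma_n$ to identify that image with the orbit space as semialgebraic spaces. The identification with multisubsets is then read off directly.

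First I would use the embedding from the definition of a semialgebraic subset to regard $R$ as a semialgebraic subset of some $\R^k$; then $R^n$ is a semialgebraic subset of $(\R^k)^n=\R^{kn}$, since finite products of semialgebraic sets are semialgebraic, and $\Sigma_n$ acts on $\R^{kn}$ by permuting the $n$ coordinate blocks, preserving $R^n$. Write $m=kn$. In characteristic zero Noether's theorem gives that the invariant ring $\R[x_1,\dots,x_m]^{\Sigma_n}$ is finitely generated, say by $p_1,\dots,p_N$; these assemble into a polynomial map $P=(p_1,\dots,p_N)\colon\R^m\to\R^N$ which is constant on $\Sigma_n$-orbits. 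Two facts about $P$ drive the argument. Its image is semialgebraic by Tarski--Seidenberg, and $P$ separates orbits: given $x,y$ in distinct orbits one interpolates a polynomial taking value $1$ on the finite set $\Sigma_n\cdot x$ and $0$ on $\Sigma_n\cdot y$ and averages it over $\Sigma_n$ (legitimate because $|\Sigma_n|=n!$ is invertible), producing a separating invariant, which is therefore a polynomial in the $p_i$.

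The technical core is to upgrade the resulting bijection to an isomorphism of semialgebraic spaces, and here I would work globally first. Since $\R[x_1,\dots,x_m]$ is a finite module over its invariant subring $\R[p_1,\dots,p_N]$, the map $P$ is a finite morphism, hence proper and closed as a semialgebraic map $\R^m\to\R^N$. A continuous, closed, orbit-separating surjection is a quotient map, so the induced continuous bijection $\Phi\colon\R^m/\Sigma_n\to P(\R^m)$ is a semialgebraic homeomorphism. Because $R^n$ is $\Sigma_n$-invariant we have $P^{-1}(P(R^n))=R^n$, so $\Phi$ restricts to a semialgebraic homeomorphism $R^n/\Sigma_n\to P(R^n)$; as $P(R^n)$ is semialgebraic, this exhibits $R^{(n)}=R^n/\Sigma_n$ as a semialgebraic space. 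Finally, a point of $R^n/\Sigma_n$ is by definition a $\Sigma_n$-orbit of an $n$-tuple, i.e.\ an unordered $n$-tuple with repetitions allowed, which is exactly a cardinality-$n$ multisubset of $R$.

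I expect the main obstacle to be precisely the topological upgrade in the previous paragraph: a continuous semialgebraic bijection need not be a homeomorphism (for instance the standard bijection $[0,2\pi)\to S^1$), so one cannot skip the properness argument, and it is the finiteness of $\Sigma_n$ — equivalently the module-finiteness of $\R[x_1,\dots,x_m]$ over its invariants — that makes $P$ proper and rescues the identification. Carrying the argument out globally on $\R^m$ and only then restricting to the invariant subset $R^n$ is what keeps this step clean, since $P$ restricted to a non-closed $R^n$ need not be proper on its own.
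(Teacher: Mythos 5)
Your proof is correct, but it takes a genuinely different route from the paper's. The paper argues abstractly: it observes that the graph of the $\Sigma_n$-equivalence relation on $R^n$ projects to $R^n$ with finite fibers, hence properly, and then invokes Brumfiel's theorem (Theorem 1.4 of the cited paper on quotient spaces for semialgebraic equivalence relations) to conclude that the topological quotient is already a quotient in the semialgebraic category. You instead build an explicit semialgebraic model: you reduce to an affine picture, take a finite generating set $p_1,\dots,p_N$ of the invariant ring via Noether's theorem, and realise $R^{(n)}$ as the Tarski--Seidenberg-definable image $P(R^n)\subset\R^N$, with the quotient identification secured by orbit separation plus properness of the finite morphism $P$. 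Both arguments hinge on exactly the same finiteness of $\Sigma_n$ to get properness and hence closedness, so the topological "upgrade" you flag as the main obstacle is handled in morally the same way in both proofs; the difference is that the paper outsources it to a general quotient theorem while you carry it out by hand. What your version buys is an explicit affine embedding of $R^{(n)}$ (for $R=\CN$ your map $P$ is essentially the Vieta map, which matches the paper's later Proposition on $\CN^{(\infty)}\cong V$), at the cost of the mild reduction to the affine case --- harmless here, since the paper's working definition of a semialgebraic space is a semialgebraic subset of a variety embedded in $\R^k$, but worth a sentence if one insists on the full Delfs--Knebusch generality. One small step you state without proof, that the homeomorphism $\R^m/\Sigma_n\to P(\R^m)$ restricts to one on the saturated subset $R^n$, does need the observation that the quotient by a group action is an open map (so that its restriction to a saturated set is again a quotient map); this is standard and does not affect correctness.
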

\begin{proof}
Denote by $E\subset R^n\times R^n$ an equivalence relation on $R^n.$ 
Since $\Sigma_n$ is a finite group, projection map $\pi$ from graph of equivalence relation $E$ to $R^n$ has finite fibers. Hence $\pi$ is proper. Hence, by Theorem 1.4 \cite{Bru1987}, quotient space in the topological category is a quotient space in the category of semialgebraic spaces. Thus, its elements could be identified with equivalence classes of $\Sigma_n$-action, which are in $1-1$ correspondence with unordered subsets having the same multiplicity of each element. 
\end{proof}
\begin{definition}
{\it Infinite symmetric product} of a real algebraic variety $R$ with marked point $e$ is a filtered real algebraic variety $R^{(\infty)}$ given as sequence of quotients  defined by filtered action by permutations of filtered group $\Sigma^{\infty}=\Sigma_0\subseteq\Sigma_1\subset\Sigma_2\subset\ldots$ on infinite product of $R.$ 

If embedding defining a filtration on product is denoted $\lambda_i, i\in\N$ then embeddings defining filtration on symmetric product will be denoted $\lambda_{(i)}.$

$n$-th member of filtration is an $n$-th symmetric product of $R.$ It is denoted by $R^{(n)}.$ 
\end{definition}

Note that, in general, symmetric products of real algebraic varieties could be not real algebraic varieties, but only semialgebraic spaces(abstract semialgebraic sets), nevertheless symmetric product of smooth complex algebraic curve is a smooth complex algebraic variety \cite{McD1962}, moreover, we can state an important proposition, which, actually forms a basis for all our consequent considerations. Some form of that proposition appears for the first time probably in a paper by S.D.Liao from 1954 \cite{Lia1954}.  
\begin{proposition}\label{proj}
Infinite symmetric products of $\CN$ and of $\CP^1$ are filtered real algebraic varieties.
$\CN^{(\infty)}\cong V,$ while $(\CP^1)^{\infty}\cong \CP^{\infty}\cong {\bf P}(U).$

First isomorphism is given by the correspondence between roots and coefficients of monic polynomials, while the second is provided by the decomposition of binary forms into the product of linear one's.
\end{proposition}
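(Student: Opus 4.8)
The plan is to prove the statement one filtration level at a time and then verify that the defining embeddings are morphisms, so that the two infinite symmetric products become filtered real algebraic varieties in the sense of the definitions above. At each level the real content is to upgrade the set-theoretic root--coefficient bijection to an isomorphism of real algebraic varieties, reconciling it with the semialgebraic quotient structure already furnished by Proposition \ref{semialg}. The affine case will rest on the fundamental theorem of symmetric functions, the projective case on the classical identification of the symmetric power of $\CP^1$ with projective space; I would handle them separately and then glue the charts.

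For the monic case I would first consider the Vi\`ete morphism $v_n\colon\CN^n\to V_n$ sending $(r_1,\ldots,r_n)$ to the coefficient vector of $\prod_{j=1}^n(z-r_j)$. Writing each complex coordinate in real and imaginary parts, these coefficients are, up to sign, the elementary symmetric polynomials $e_1,\ldots,e_n$, so $v_n$ is a morphism of real algebraic varieties; it is $\Sigma_n$-invariant and descends to a map $\CN^{(n)}\to V_n$. By the fundamental theorem of symmetric functions $\CN[x_1,\ldots,x_n]^{\Sigma_n}=\CN[e_1,\ldots,e_n]$ with the $e_i$ algebraically independent, so the $e_i$ exhibit $V_n\cong\R^{2n}$ as the quotient of $\CN^n$ by $\Sigma_n$; the fibres of $v_n$ are exactly the $\Sigma_n$-orbits because a monic polynomial factors uniquely, up to order, into linear factors. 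Since $v_n$ is moreover proper (bounded coefficients force bounded roots and conversely), Proposition \ref{semialg} identifies the semialgebraic quotient $\CN^{(n)}$ with the real algebraic variety $V_n$. The marked point for $\CN$ is $0$, and adjoining it to a multiset corresponds to $p(z)\mapsto z\,p(z)$, i.e.\ to the closed embedding $(a_0,\ldots,a_{n-1})\mapsto(0,a_0,\ldots,a_{n-1})$ defining $V$. Hence $\CN^{(\infty)}\cong V$ as filtered real algebraic varieties.

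For the projective case I would use binary forms. The map $(\CP^1)^n\to{\bf P}(U_n)$ sending $([a_1:b_1],\ldots,[a_n:b_n])$ to the class of $\prod_{j=1}^n(b_jX-a_jY)$ is multilinear (degree one in each pair $(a_j,b_j)$), hence a well-defined morphism into $\CP^n={\bf P}(U_n)$; it is $\Sigma_n$-invariant and descends to $(\CP^1)^{(n)}\to{\bf P}(U_n)$, which is a bijection by the homogeneous form of the fundamental theorem of algebra. To promote this to an isomorphism of varieties I would work in the standard affine charts of $\CP^n$: on the chart where the coefficient $c_n$ of $X^n$ is nonzero, no root lies at $\infty=[1:0]$, and dehomogenising in $x=X/Y$ reduces the factorisation to the monic situation already settled, so the inverse is regular there; a root at $\infty$ corresponds precisely to the vanishing of $c_n$ and is absorbed by a symmetric chart via the involution $X\leftrightarrow Y$ swapping $0$ and $\infty$. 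Compactness of $(\CP^1)^{(n)}$ and of $\CP^n$ makes the continuous semialgebraic bijection a homeomorphism, and the chartwise computation supplies an algebraic inverse. Adjoining the marked point $\infty$ multiplies the form by $Y$, i.e.\ is the closed embedding $[c_0:\ldots:c_n]\mapsto[c_0:\ldots:c_n:0]$ which is exactly $f\mapsto fy$ defining ${\bf P}(U)\cong\CP^{\infty}$. Hence $(\CP^1)^{(\infty)}\cong\CP^{\infty}\cong{\bf P}(U)$ as filtered real algebraic varieties.

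I expect the main obstacle to be regularity of the inverse, that is, passing from a continuous bijection to an isomorphism of real algebraic varieties rather than a mere semialgebraic homeomorphism, since the roots do not depend algebraically on the coefficients. In the affine case this is resolved cleanly by invariant theory, the invariant ring being polynomial so that the quotient is genuinely $\R^{2n}$ and the comorphism is an isomorphism. In the projective case there is no global affine invariant ring, so the argument must be localised to charts and reduced to the affine case, with compactness controlling the global inverse; the bookkeeping at $\infty$ (the degree drop of the dehomogenised polynomial versus the top coefficient of the binary form) is the delicate point. A final routine check is that the quotient map of Proposition \ref{semialg} coincides with $v_n$ (respectively its projective analogue), which holds because each is a surjective, proper, $\Sigma_n$-invariant semialgebraic map whose fibres are exactly the orbits.
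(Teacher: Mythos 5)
Your argument is correct but runs in the opposite direction from the paper's. The paper does the projective case first and directly: it identifies $\CP^n$ with degree-$n$ binary forms up to scale, invokes unique factorisation into linear forms for the set-level bijection with $(\CP^1)^{(n)}$, notes that the symmetric-product map is given by homogeneous polynomials (hence a morphism of complex, so of real, algebraic varieties), checks that multiplication by $y$ commutes with the quotient maps so the filtration is respected, and then obtains $\CN^{(\infty)}\cong V$ simply by restricting to the locus of linear forms with non-vanishing $x$-coefficient. You instead settle the affine case first via the Vi\`ete map, properness and Proposition~\ref{semialg}, and then assemble the projective case from affine charts glued by $X\leftrightarrow Y$, with compactness supplying the global homeomorphism. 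Your route is longer but makes explicit the one point the paper passes over in silence: why the descended continuous semialgebraic bijection is an isomorphism onto the target rather than a mere bijective morphism. The properness-plus-quotient mechanism is exactly right here, since $R^{(n)}$ is by definition the semialgebraic quotient and Proposition~\ref{semialg} identifies that with the topological one; this is also what the paper relies on implicitly.

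One caveat on the invariant-theoretic gloss you lean on twice (``the invariant ring being polynomial so that the quotient is genuinely $\R^{2n}$ and the comorphism is an isomorphism''): this conflates the complex and the real pictures. The fundamental theorem of symmetric functions gives $\CN[z_1,\dots,z_n]^{\Sigma_n}=\CN[e_1,\dots,e_n]$, but the ring of $\Sigma_n$-invariant \emph{real} polynomial functions on $(\R^2)^n$ --- the multisymmetric (vector) invariants of the diagonal permutation action --- is not a polynomial ring and is not generated by the real and imaginary parts of the $e_k$: already for $n=2$ one needs five generators subject to a relation, and the real categorical quotient is a singular affine variety, not $\R^4$. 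So the comorphism argument, read in the real category, does not establish what you want. What does establish it is the other argument you give in the same breath: $v_n$ is a proper semialgebraic surjection whose fibres are exactly the $\Sigma_n$-orbits, hence it induces a semialgebraic homeomorphism from the semialgebraic quotient $\CN^{(n)}$ onto $V_n$. That argument is self-contained and is all the proposition requires, so the proof stands; I would just drop or reword the invariant-ring justification.
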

\begin{proof}
Let us identify $\CP^n$ with the space of homogeneous binary forms $f=\sum_{i=0}^n a_ix^iy^{n-i}$ of degree $n$ defined up to a constant multiple. Each such form could be uniquely (up to reordering) decomposed into the product of linear forms (defined up to a constant multiple ) $\prod_{i=1}^n(\alpha_ix-\beta_iy).$

This space of linear forms could be interpreted as $(\CP^1)^{n}.$ Thus one can see an initial binary form as an equivalence class of $\Sigma_n$-action on $(\CP^1)^{n},$ which gives a symmetric product morphism. This map is defined by polynomials in homogeneous coordinates, thus it is a morphism of complex algebraic varieties, and, hence it is a morphism of real algebraic varieties.

Take some product of linear homogeneous binary forms $\prod_{i=1}^n(\alpha_ix-\beta_iy).$ Its multiplication by $y$ defines a closed embedding of $(\CP^1)^n$ into $(\CP^1)^{n+1}.$  Polynomials defining quotient map are homogeneous, therefore embedding commutes with symmetric product morphisms.

So, we have a morphism of filtered real algebraic varieties.

Proof for the case of $\CN$ could be done either by analogy or by proceeding to the space of linear homogeneous binary forms with an $x$-coeffient non-equal to zero and taking a corresponding filtered real algebraic subvarieties. 
\end{proof}

\begin{proposition}\label{stratif}
Let $(R,S)$ be a stratified real algebraic variety with marked point $\{e\}$. Take some $n\in\N\cup\{\infty\}$ such that for each $m\leq n$ $R^{(n)}$ is a real algebraic variety.
 
Denote by $\kappa_m\colon R^m\to R^{(m)}$ the symmetric product morphism
 
Then $R^{(n)}$ is a stratified filtered real algebraic variety with stratification $S^{(n)}$ given by sets $\{k_i|i\in S\},$ that consists of such points $x\in R^{(n)}$ that for each $i\in S$ is number of points from $i$ in coordinate projection of $\kappa_n^{-1}(x)$ is $k_i.$
Filtration of $R^{(n)}$ is given by the embeddings of multisets $$\rho_i\colon R^{(i)}\to R^{(i+1)},\quad \{x_1,\ldots,x_i\}\mapsto\{x_1,\ldots,x_i,e\}.$$
\end{proposition}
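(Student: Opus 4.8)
The plan is to work throughout with the multiset description of $R^{(m)}$ furnished by Proposition \ref{semialg}: a point $x\in R^{(m)}$ is a cardinality-$m$ multisubset of $R$, and since $R$ is the disjoint union of its strata $i\in S$, every element of this multiset lies in exactly one stratum. Thus to each $x$ one attaches its \emph{count profile} $\nu(x)=(k_i)_{i\in S}$, where $k_i$ is the number of elements of the multiset lying in the stratum $i$ (equivalently, the number of coordinates of any tuple in $\kappa_m^{-1}(x)$ that land in $i$); for finite $m$ one has $\sum_{i\in S}k_i=m$. The sets $S^{(m)}=\{\nu^{-1}(k)\mid k\colon S\to\NN\}$ then tautologically partition $R^{(m)}$, so the only substantive points are that each stratum is semialgebraic and that the whole package is compatible with the filtration embeddings.

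First I would establish semialgebraicity of the strata. Lift a count profile $(k_i)$ to the set $A_{(k_i)}\subseteq R^m$ of tuples having exactly $k_i$ coordinates in $i$ for every $i$. This set is a finite union, over all assignments of the $m$ coordinate positions to strata of $S$ respecting the prescribed counts, of products $s_1\times\cdots\times s_m$ with $s_j\in S$; since each stratum is semialgebraic and semialgebraicity is preserved under finite products and finite unions, $A_{(k_i)}$ is semialgebraic. The symmetric product morphism $\kappa_m$ is semialgebraic (it is the proper quotient by $\Sigma_m$ appearing in Proposition \ref{semialg}), so by the projection theorem for semialgebraic maps the image $\kappa_m(A_{(k_i)})$ is semialgebraic; and this image is precisely the stratum $\nu^{-1}((k_i))$, because $A_{(k_i)}$ is $\Sigma_m$-saturated. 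Hence every member of $S^{(m)}$ is a semialgebraic subset of $R^{(m)}$.

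It remains to treat the filtration. The embedding $\rho_i\colon R^{(i)}\to R^{(i+1)}$, $\{x_1,\ldots,x_i\}\mapsto\{x_1,\ldots,x_i,e\}$, is the descent of the closed embedding $\varphi_i\colon(x_1,\ldots,x_i)\mapsto(x_1,\ldots,x_i,e)$ of Proposition \ref{infprod} along the equivariant inclusion $\Sigma_i\hookrightarrow\Sigma_{i+1}$; it is injective on multisets (one recovers $\{x_1,\ldots,x_i\}$ by deleting a single copy of $e$) and, under the standing hypothesis that the $R^{(m)}$ are real algebraic varieties, a closed embedding of such — this descent is exactly what makes the infinite symmetric product a filtered real algebraic variety. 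For the stratification compatibility let $\hat s\in S$ be the unique stratum with $e\in\hat s$. Adding one copy of $e$ increases the $\hat s$-component of the count profile by one and leaves the others unchanged, so if $s\in S^{(i)}$ has profile $(k_j)_{j\in S}$ then $\rho_i(s)\subseteq\widetilde s$, where $\widetilde s\in S^{(i+1)}$ is the stratum whose profile agrees with $(k_j)$ except that its $\hat s$-component is $k_{\hat s}+1$. Conversely any point of $\widetilde s\cap\rho_i(R^{(i)})$ is $\rho_i(y)$ for some $y$, and its profile forces $y$ to have profile $(k_j)$, i.e. $y\in s$; this gives $\widetilde s\cap\rho_i(R^{(i)})=\rho_i(s)$, which is precisely the condition in the definition of a filtered stratified real algebraic variety.

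The main obstacle I anticipate is not the combinatorial bookkeeping but the passage through the quotient: one must be sure that $\kappa_m$ is a bona fide semialgebraic (and, where the $R^{(m)}$ are varieties, algebraic) map, so that images of strata descend correctly and the $\rho_i$ are genuine closed embeddings rather than merely continuous injections. For the finite levels this is exactly the content of Propositions \ref{semialg} and \ref{proj}; the case $n=\infty$ then follows formally, since the stratifications $S^{(m)}$ were just shown to be carried into one another by the $\rho_i$, so they assemble into the stratification $S^{(\infty)}$ of the filtered variety $R^{(\infty)}$.
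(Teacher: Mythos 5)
Your proof is correct and follows essentially the same route as the paper's: push the product stratification of $R^m$ forward through the finite-group quotient $\kappa_m$ (using that it is a closed, semialgebraic map and that the lifted strata are $\Sigma_m$-saturated), then verify the filtration compatibility at the marked point $e$. You are somewhat more explicit than the paper about the semialgebraicity of the strata and the verification that $\widetilde{s}\cap\rho_i(R^{(i)})=\rho_i(s)$, but the underlying argument is the same.
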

\begin{proof}
$\kappa_m$ is a quotient by a finite group action, hence it is closed map. Therefore, by Proposition \ref{infprod} , $R^{(n)}$ is filtered and $S$ induces a stratification of $R^{n}.$ 
First note that for each $0< i\leq n,$ and $s_0,s_1\in S$ either $\kappa_i(s_0)=\kappa_i(s_1)$ or $\kappa_i(s_0)\cap\kappa_i(s_1)=\varnothing.$ 

Therefore, $(R^{(i)},\kappa_i(S^i))$ is stratified. Take $s\in S^{i}$ and take such $\widetilde{s}\in S^{i+1}$ that $\lambda_{i}(s)\subseteq\widetilde{s}.$ 
Let $x\in \lambda_{(i)}(\kappa_{i+1}(S))\cap\kappa(\widetilde{s}).$ Then there exists $\sigma,\mu\in\Sigma_{n+1}$ $\sigma x\in\widetilde{s},$ therefore $x\in\kappa_{i+1}(\lambda_i(s))=\lambda_{(i)}(\kappa_i(s)).$ Hence stratification agrees with filtration.
\end{proof}
\begin{definition}
Let $R$ be a real algebraic variety. Then its symmetric product $R^{(n)}, n\in\N\cup\{\infty\}$ admits a canonical stratification $\mu(R^{(n)}),$ with stratum parametrised by partitions of $n,$ namely,                
$x\in(m_1,\ldots,m_k), \sum_iim_i=n$ iff preimage of $x$ under canonical morphism $\kappa_n\colon R^n\to R^{(n)}$ have exactly $m_i$ components of multiplicity $i.$

Strata of $\mu(R^{(n)})$ corresponding to a partition $\lambda$ will be denoted $\lambda_{\mu}^R.$ In case of $R$ equal to $\CP^1$ upper index will be omitted.

Note that this stratification does not agree with filtrations on $R^{(n)}$ produced by coordinatewise embeddings, 

If $(R,S)$ is a stratified real algebraic variety, then stratification of $R^{(n)}$ given by intersections of stratum from $S^{(n)}$ with $Mult(R^{(n)})$ will be denoted as $\widehat{S^{(n)}}.$
\end{definition}
\begin{proposition}
Let $R$ be a real algebraic variety with marked point $e$. Suppose that for each $m\leq n,$ $R^{(m)}$ is real algebraic variety. Let $$\rho_i\colon R^{(i)}\to R^{(i+1)},\quad\{x_1,\ldots x_i\}\mapsto\{x_1,\ldots x_i, e\}.$$

Let $\eta$ be a partition of $n.$ 
Then, $$\eta_{\mu}\cap\rho_{n-1}(R^{(n-1)})\subset\bigcup_{\begin{array}{l}\lambda\vdash n-1\\\lambda<\eta\end{array}}\rho_{n-1}(\lambda_{\mu})$$ and for each $\lambda\vdash n-1, \lambda<\eta$ holds $\eta_{\mu}\cap\rho_{n-1}(\lambda_{\mu})\neq\varnothing.$

If $\lambda$ differs from $\eta$ on some component $k$ of length $r$ then 
$$\eta_{\mu}\cap\rho_{n-1}(\lambda_{\mu})=\circ_{i=n}^{n-r}\rho_i((\lambda\setminus k)_\mu\setminus\rho_{n-r-1}(R^{(n-r-1)}).$$ 
\end{proposition}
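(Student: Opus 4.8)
The plan is to work entirely in the multiset model of $R^{(m)}$ supplied by Proposition \ref{semialg}: a point of $R^{(m)}$ is a cardinality-$m$ multisubset of $R$, the map $\rho_i$ sends a multiset $M$ to $M\cup\{e\}$, so that the image $\rho_i(R^{(i)})$ is exactly the set of multisets of size $i+1$ in which $e$ occurs with multiplicity at least $1$, and a multiset lies in the stratum $\lambda_\mu$ precisely when its multiplicity profile — the partition recording how many distinct elements occur with each multiplicity — equals $\lambda$. Under this dictionary the order $\lambda<\eta$ for $\lambda\vdash n-1$, $\eta\vdash n$ is the covering relation of Young's lattice: $\eta$ is $\lambda$ with a single box added, equivalently with exactly one part raised from size $s-1$ to size $s$ (a new part of size $1$ being the case $s=1$). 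I would open the proof by recording these three translations, after which every assertion becomes a statement about multisets containing $e$.

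For the first inclusion I would take any $M\in\eta_\mu\cap\rho_{n-1}(R^{(n-1)})$; by the description of the image $e$ occurs in $M$, say with multiplicity $s\geq 1$, and $N:=M\setminus\{e\}$ has $\rho_{n-1}(N)=M$. Its profile $\lambda$ is $\eta$ with the part corresponding to $e$ lowered from $s$ to $s-1$, so $\lambda<\eta$ and $M\in\rho_{n-1}(\lambda_\mu)$, which is the claimed containment. For non-emptiness, given $\lambda<\eta$ with $\eta$ obtained by raising a part to size $s$, I would exhibit an explicit witness: place $e$ with multiplicity $s-1$ and fill the remaining parts of $\lambda$ with pairwise distinct points of $R$ all different from $e$; this is possible whenever $R$ carries enough points, which holds in the cases of interest $R=\CP^1$ and $R=\CN$. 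Adjoining one more copy of $e$ lands the witness in $\eta_\mu$, proving $\eta_\mu\cap\rho_{n-1}(\lambda_\mu)\neq\varnothing$.

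The substantive part is the exact formula. Fix $\lambda<\eta$ differing from $\eta$ on the component $k$, of size $r$ in $\eta$ (hence of size $r-1$ in $\lambda$); then $\lambda\setminus k=\eta\setminus k$ as partitions of $n-r$, so $(\lambda\setminus k)_\mu\subset R^{(n-r)}$, and $(\lambda\setminus k)_\mu\setminus\rho_{n-r-1}(R^{(n-r-1)})$ is exactly the collection of profile-$(\eta\setminus k)$ multisets of size $n-r$ that do not contain $e$. The composite $\rho_{n-1}\circ\cdots\circ\rho_{n-r}$, written $\circ_{i=n}^{n-r}\rho_i$, adjoins $r$ copies of $e$, carrying each such $M'$ to $M'\cup\{e^r\}$. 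For $\supseteq$ I would observe that $M'\cup\{e^r\}$ has profile $\eta$ (the parts of $M'$ are those of $\eta\setminus k$, and the $r$ fresh copies of $e$ supply the missing size-$r$ part since $e\notin M'$) and that deleting one $e$ leaves a profile-$\lambda$ multiset, so the image sits inside $\eta_\mu\cap\rho_{n-1}(\lambda_\mu)$. For $\subseteq$ I would take $M\in\eta_\mu\cap\rho_{n-1}(\lambda_\mu)$, write $M=N\cup\{e\}$ with $N$ of type $\lambda$, and run the multiplicity bookkeeping backwards: the multiplicity $s$ of $e$ in $M$ satisfies $\lambda=\eta$ with a size-$s$ part lowered, and since the removed box sits on the size-$r$ component $k$ this forces $s=r$; hence $M=M'\cup\{e^r\}$ with $M':=M\setminus\{e^r\}$ of type $\eta\setminus k$ and $e\notin M'$, so $M$ lies in the right-hand side.

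I expect the delicate point to be exactly this ``$s=r$'' step, that is, checking that the size of the altered component is an invariant of the pair $(\lambda,\eta)$ even when $\eta$ has several equal parts, so that ``the component $k$ of length $r$'' is unambiguous and the multiplicity of $e$ is pinned down to be precisely $r$ rather than merely at least $1$; this is what guarantees that excising $\rho_{n-r-1}(R^{(n-r-1)})$ (the profile-$(\eta\setminus k)$ multisets that already contain $e$) is the correct correction and prevents overcounting. A secondary caveat I would flag explicitly is the index range of $\circ_{i=n}^{n-r}\rho_i$: a dimension count shows that exactly $r$ of the maps $\rho_i$ are composed, taking $R^{(n-r)}$ to $R^{(n)}$, which I would state outright so that the composite is read as adjoining $r$, not $r+1$, copies of $e$.
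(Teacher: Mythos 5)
Your proof is correct and follows essentially the same route as the paper's: work in the multiset model, observe that $\rho_i$ adjoins $e$ with multiplicity one so that removing or adding a copy of $e$ moves the multiplicity profile by a single box, and that the complement of the image of $\rho$ consists exactly of the multisets not containing $e$. You supply considerably more detail than the paper does for the exact formula in the last claim --- in particular the verification that the multiplicity of $e$ is forced to equal $r$, and the off-by-one reading of $\circ_{i=n}^{n-r}\rho_i$ --- and you share with the paper the same implicit assumption that $R$ has enough points for the non-emptiness statement.
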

\begin{proof}
Note that if $\lambda \not < \eta $ then $\rho_{n-1}(\lambda_{\mu})$ have some position greater then in $\lambda,$ but $\rho_{i-1}$ by definition is (non-strictly) monotonic on multiplicities.

Take some $\lambda\vdash n-1, \lambda<\eta.$ That means that $\lambda$ is less then $\eta$ on strictly one position $k$ of length $l$. Take a point $q$ from $\lambda_{\mu}$ with multiplicity $l$ on $e.$ Then $\rho_{i-1}(q)\in\eta_{\mu}.$

Note that for each $i<n$ $\rho_i$ adds the point $e$ with multiplicity $1$ to each element. Thus points not belonging the image of $\rho$ are exactly multisets not containing $e.$
This gives the last claim. 
\end{proof}
Finally, we concentrate on different variants of stratifications induced by the symmetric product operation. These stratifications are crucial for the study of singularities of stratified varieties and for the description of the local structure of degree change. 
\begin{definition}
 Let $S, T$ be stratifications of real algebraic variety $R.$ We will write $T\preccurlyeq S$ iff for each $s\in S$ there exists $\tau\subseteq T$ such that $s=\cup_{t\in\tau}t.$
 If $R$ is filtered then $T\preccurlyeq S$ iff for each $i\in\N$ $T_i\preccurlyeq S_i.$

 Relation $\preccurlyeq$ defines a partial order on a set of stratifications of $R.$ 
\end{definition}
\begin{lemma}\label{localstratum}
 Let $(R,S)$ be a stratified filtered real algebraic va\-rie\-ty gi\-ven by the se\-que\-nce $(R_i,S_i)$ of filtered real al\-ge\-bra\-ic va\-rie\-ti\-es and clo\-sed em\-bed\-dings $\lambda_i\colon R_i\to R_{i+1}.$
 
 Then there exist unique maximal stratification $\underline{S}\preccurlyeq S$ such that $(R,\underline{S})$ is a filtered stratified algebraic variety and for each $s\in\underline{S}_i,\,$ $\lambda_i(s)=t\in \underline{S}_{i+1}.$  
 
 $\underline{S}$ could be inductively defined by the following way:
 
 $$\underline{S}_0=S_0,\quad \underline{S}_{i+1}=\{s\in S|s\setminus \lambda_i(R_i)\}\cup\{t\in\underline{S}_i|\lambda_i(t)\}).$$
\end{lemma}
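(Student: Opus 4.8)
The plan is to verify the explicit inductive formula directly, proceeding by induction on the filtration level $i$, and at each level checking four things: that $\underline S_i$ is a genuine semialgebraic stratification of $R_i$, that the closed embeddings carry $\underline S_i$-strata \emph{exactly} onto $\underline S_{i+1}$-strata (the strong lifting property, which in particular implies the filtered stratified axiom), that $\underline S$ refines $S$ so that $\underline S\preccurlyeq S$, and finally that $\underline S$ is coarsest among all compatible refinements. It is worth fixing the direction of the order first: since $T\preccurlyeq S$ means every $S$-stratum is a union of $T$-strata, the relation $\underline S\preccurlyeq S$ asserts that $\underline S$ refines $S$, and ``maximal'' here means coarsest; so the target of the last step is to show that every compatible refinement $T$ of $S$ satisfies $T\preccurlyeq\underline S$.

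First I would check well-definedness. The base level is $\underline S_0=S_0$. For the step, the two families $\{s\setminus\lambda_i(R_i):s\in S_{i+1}\}$ and $\{\lambda_i(t):t\in\underline S_i\}$ (discarding empty members) partition $R_{i+1}$: a point off $\lambda_i(R_i)$ lies in a unique set of the first family, a point on the image is $\lambda_i(r)$ with $r$ in a unique $\underline S_i$-stratum, and the two families are mutually disjoint because the second lies in $\lambda_i(R_i)$ while the first avoids it; disjointness within each family follows from disjointness of the $S_{i+1}$-strata and from injectivity of $\lambda_i$. Semialgebraicity is routine: $\lambda_i$ is a closed embedding, hence a semialgebraic map, so $\lambda_i(R_i)$ is closed semialgebraic, its complement is open semialgebraic, and each image $\lambda_i(t)$ is semialgebraic by Tarski--Seidenberg. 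The strong lifting property is then immediate, since by construction $\lambda_i(t)\in\underline S_{i+1}$ for every $t\in\underline S_i$; taking $\tilde s=\lambda_i(s)$ also verifies the filtered stratified axiom, because $\lambda_i(s)\cap\lambda_i(\underline S_i)=\lambda_i(s)$.

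Next I would show $\underline S$ refines $S$ by induction on $i$. Any $s\in S_{i+1}$ decomposes as $(s\setminus\lambda_i(R_i))\cup(s\cap\lambda_i(R_i))$, and the first piece is (if nonempty) a member of $\underline S_{i+1}$. For the second, the filtered stratified hypothesis on $(R,S)$ assigns to each $t\in S_i$ a unique $\tilde t\in S_{i+1}$ with $\tilde t\cap\lambda_i(R_i)=\lambda_i(t)$, which forces $\lambda_i^{-1}(s)$ to be either a single $S_i$-stratum or empty; the inductive hypothesis that $\underline S_i$ refines $S_i$ then makes $\lambda_i^{-1}(s)$ a union of $\underline S_i$-strata, so $s\cap\lambda_i(R_i)=\lambda_i(\lambda_i^{-1}(s))$ is a union of strata of the form $\lambda_i(u)$ with $u\in\underline S_i$, each of which lies in $\underline S_{i+1}$. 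Hence $s$ is a union of $\underline S_{i+1}$-strata, and $\underline S\preccurlyeq S$.

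Finally, for maximality let $T\preccurlyeq S$ be any compatible refinement and assume inductively $T_i\preccurlyeq\underline S_i$, the base case being $T_0\preccurlyeq S_0=\underline S_0$. The key observation is that compatibility of $T$ makes $\lambda_i(R_i)=\bigcup_{t'\in T_i}\lambda_i(t')$ a union of $T_{i+1}$-strata, hence $T_{i+1}$-saturated. Consequently a stratum $s\setminus\lambda_i(R_i)$ of the first family, being the intersection of the $T_{i+1}$-saturated set $s$ (as $T\preccurlyeq S$) with the $T_{i+1}$-saturated complement of $\lambda_i(R_i)$, is a union of $T_{i+1}$-strata; and a stratum $\lambda_i(t)$ of the second family is a union of $T_{i+1}$-strata because $t$ is a union of $T_i$-strata whose images are $T_{i+1}$-strata. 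Thus $T_{i+1}\preccurlyeq\underline S_{i+1}$, so $T\preccurlyeq\underline S$ at every level, and antisymmetry of the partial order $\preccurlyeq$ gives uniqueness of the maximal element. I expect this maximality step to be the main obstacle, precisely because it hinges on recognizing that compatibility of $T$ forces the image $\lambda_i(R_i)$ to be $T$-saturated; the well-definedness and refinement steps are essentially bookkeeping together with Tarski--Seidenberg.
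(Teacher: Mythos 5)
Your proof is correct and follows essentially the same route as the paper's: verify that the explicit inductive construction yields a filtered stratified variety with the strong lifting property, then establish maximality by induction on the filtration level. You are in fact more careful than the paper at the crux of the maximality step --- the paper passes directly from $T\preccurlyeq S$ to the claim that the strata $s\setminus\lambda_i(R_i)$ are unions of $T_{i+1}$-strata, whereas you correctly isolate the needed intermediate fact that $\lambda_i(R_i)=\bigcup_{t'\in T_i}\lambda_i(t')$ is $T_{i+1}$-saturated as a consequence of the lifting hypothesis on $T$.
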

\begin{proof}

Note that if for some stratification $T$ of $R$ for each $i\in\N\, s\in T_i,\,$ $\lambda_i(s)=t\in T_{i+1}$ then $(R,T)$ is a filtered stratified real algebraic variety.  

Hence $\underline{S}$ defines filtered stratified algebraic variety $(R,\underline{S})$ that satisfies $s\in\underline{S}_i,\,$ $\lambda_i(s)=t\in \underline{S}_{i+1}.$ 
 
Suppose filtered stratified real algebraic variety $(R,T),\,T\preccurlyeq S$ satisfies our assumptions. Note that $T_0\preccurlyeq S_0=\underline{S}_0.$
Proceed by induction, namely if $T_i\preccurlyeq \underline{S}_i$ then $\lambda_i(T_i)\preccurlyeq\lambda_i(\underline{S}_i),$ as $\lambda_i$ is an embedding. 
Assumptions give $\lambda_i(T_i)\subseteq T_{i+1},\, \lambda_i(\underline{S}_i)\subseteq\underline{S}_{i+1}.$
But $S_{i+1}|_{R_{i+1}\setminus \lambda_i(R_i)}\subseteq\underline{S}_{i+1}.$ Hence $T_{i+1}\preccurlyeq\underline{S}_{i+1}.$
\end{proof}
Each of these stratifications plays a special role in root clustering. Namely, $S$ deals with the global structure of a clustering, $\underline{S}$ reflects local structure of degree change. $\widehat{S}$ reflects the structure of borders, corners and singularities of stratum. 

One can also easily prove the following lemma.
\begin{lemma}\label{hierarchyofstratum}
Let $(R,S)$ be a stratified real algebraic variety.
Then for each $R_i$ there exists $4$ canonical stratifications with order diagram as below:
\begin{center}
\begin{tikzcd}
&\widehat{\underline{S^{(i)}}}      \arrow[Preccurlyeq]{dr}{}\arrow[Preccurlyeq]{dl}{} &\\
  \underline{S^{(i)}} \arrow[Preccurlyeq]{dr}{}&& \widehat{S^{(i)}}\arrow[Preccurlyeq]{dl}{}\\
&S^{(i)}&
\end{tikzcd}
\end{center}
\end{lemma}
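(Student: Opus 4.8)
The plan is to recognize all four relations as instances of two elementary facts about the poset of stratifications of $R^{(i)}$ ordered by $\preccurlyeq$: first, that $\underline{S^{(i)}}\preccurlyeq S^{(i)}$ is already furnished by Lemma \ref{localstratum}; and second, that the ``hat'' operation is nothing but taking the common refinement (meet) with the fixed multiplicity stratification $\mu(R^{(i)})$, and that this meet is automatically $\preccurlyeq$-monotone. So I would not compute the strata explicitly but argue structurally through the semilattice of stratifications.

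First I would fix the meet-semilattice structure. For two stratifications $A,B$ of $R^{(i)}$ let $A\wedge B$ denote the collection of nonempty intersections $\{a\cap b\mid a\in A,\ b\in B\}$. Since finite intersections of semialgebraic subsets are semialgebraic and the $a\cap b$ partition $R^{(i)}$, this is again a stratification, and by construction $A\wedge B\preccurlyeq A$ and $A\wedge B\preccurlyeq B$, as each stratum of $A$ is the union of those $a\cap b$ it contains. I would then record the one nontrivial property needed, monotonicity of the meet: if $T\preccurlyeq T'$ then $T\wedge B\preccurlyeq T'\wedge B$. This holds because each stratum $t'\cap b$ of $T'\wedge B$ is the union of the finitely many strata $t\cap b$ with $t\in T$, $t\subseteq t'$, and the relation $T\preccurlyeq T'$ guarantees precisely such a decomposition of $t'$ into $T$-strata.

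Next I would identify the four objects. By the definition following Proposition \ref{stratif}, $\widehat{S^{(i)}}=S^{(i)}\wedge\mu(R^{(i)})$, and the same intersection construction applied to $\underline{S^{(i)}}$ gives $\widehat{\underline{S^{(i)}}}=\underline{S^{(i)}}\wedge\mu(R^{(i)})$; both are legitimate stratifications by the semilattice step, and each is canonical because $S^{(i)}$, $\underline{S^{(i)}}$ (Lemma \ref{localstratum}) and $\mu(R^{(i)})$ all are. The four arrows then read off immediately. The bottom-left edge $\underline{S^{(i)}}\preccurlyeq S^{(i)}$ is Lemma \ref{localstratum} applied termwise. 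The bottom-right edge $\widehat{S^{(i)}}\preccurlyeq S^{(i)}$ and the top-left edge $\widehat{\underline{S^{(i)}}}\preccurlyeq\underline{S^{(i)}}$ are both instances of the generic inequality $X\wedge\mu(R^{(i)})\preccurlyeq X$. Finally the top-right edge $\widehat{\underline{S^{(i)}}}\preccurlyeq\widehat{S^{(i)}}$ follows by applying monotonicity of the meet to $\underline{S^{(i)}}\preccurlyeq S^{(i)}$.

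The only real obstacle I anticipate is formal rather than mathematical: making sure the meet $A\wedge B$ stays inside the semialgebraic category (clear, since finite intersections of semialgebraic sets are semialgebraic and $\mu(R^{(i)})$ has only finitely many strata, indexed by partitions of $i$), and that the hat construction, introduced in the text only for $S^{(n)}$, is read uniformly as $(-)\wedge\mu(R^{(i)})$ so that the symbol $\widehat{\underline{S^{(i)}}}$ is unambiguous. Once the meet-semilattice framing and its monotonicity are in place, transitivity of $\preccurlyeq$ closes the diamond and no stratum-by-stratum verification is needed.
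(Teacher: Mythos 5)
Your argument is correct. The paper itself offers no proof of this lemma (it is introduced only with ``one can also easily prove''), so there is nothing to compare against line by line; your meet-semilattice framing is a clean and complete way to supply the missing argument. The three ingredients you isolate are exactly what is needed: the edge $\underline{S^{(i)}}\preccurlyeq S^{(i)}$ is Lemma \ref{localstratum}; the two ``hat'' edges are the generic inequality $X\wedge\mu(R^{(i)})\preccurlyeq X$ once one reads the hat construction uniformly as intersection with the multiplicity stratification (the paper defines $\widehat{S^{(n)}}$ only for $S^{(n)}$, so your explicit convention $\widehat{\underline{S^{(i)}}}=\underline{S^{(i)}}\wedge\mu(R^{(i)})$ is the right disambiguation and worth stating, as you do); and the top-right edge follows from monotonicity of the meet applied to the bottom-left edge, with transitivity closing the diamond. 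Your verification that the meet of two stratifications is again a stratification in the paper's (weak, purely partition-theoretic) sense, with nonempty semialgebraic strata, is also the correct level of care. The only caveat, which concerns the paper's statement rather than your proof, is that $S^{(i)}$ and $\underline{S^{(i)}}$ are only defined under the standing hypothesis (from Proposition \ref{stratif} and Lemma \ref{localstratum}) that the relevant symmetric products are real algebraic varieties; your proof implicitly inherits that hypothesis, which is fine.
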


Our goal here is to study these stratifications for the case symmetric powers of spaces $\CN$ and $\CP^{1}.$ Stratification $S^{(n)}$ is a main player of our exposition, while $\underline{S^{(n)}}$ is important in the local study of a filtered structure, while geometry of $\widehat{S^{(n)}}$ and $\widehat{\underline{S^{(n)}}}$ is mainly left for the future research,
as it's mainly connected with singularities, borders and higher-codimensional corners of $S^{(n)}$-strata.
\begin{proposition}\label{isolocal}
Let $(R,S)$ be a stratified real algebraic variety with marked point $e,$ such that for each $m<n$ $R^{(n)}$ is an algebraic variety.
Denote by $T$ a stratification $\{s\setminus \{e\}|s\in S\}\cup\{\{e\}\}.$

Then  $T^{(m)}=\underline{S^{(m)}}$
\end{proposition}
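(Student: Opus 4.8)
The plan is to induct on $m$, using the explicit recursive description of $\underline{S^{(m)}}$ supplied by Lemma \ref{localstratum} applied to the filtration of the symmetric product by the embeddings $\rho_i\colon R^{(i)}\to R^{(i+1)}$, $\{x_1,\ldots,x_i\}\mapsto\{x_1,\ldots,x_i,e\}$. Transcribed to this filtration the recursion reads $\underline{S^{(0)}}=S^{(0)}$ and $\underline{S^{(m+1)}}=\{\sigma\setminus\rho_m(R^{(m)}) : \sigma\in S^{(m+1)}\}\cup\{\rho_m(t) : t\in\underline{S^{(m)}}\}$. The base case $m=0$ is immediate, since $R^{(0)}$ is a single point carrying the single full stratum under each of $T^{(0)}$, $S^{(0)}$ and $\underline{S^{(0)}}$.

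Before the induction I would record two structural observations. First, $\rho_m(R^{(m)})$ is exactly the set of cardinality-$(m+1)$ multisets containing at least one copy of the marked point $e$, so its complement is precisely the set of multisets with no copy of $e$. Second, by Proposition \ref{stratif} a stratum of $S^{(m)}$ (resp. $T^{(m)}$) is indexed by the multiplicity function recording how many of the $m$ points of a multiset lie in each stratum of $S$ (resp. of $T$), and it consists of all multisets realizing that function. I would also note that $T\preccurlyeq S$, hence $T^{(m)}\preccurlyeq S^{(m)}$, so that the comparison with $\underline{S^{(m)}}\preccurlyeq S^{(m)}$ takes place among refinements of $S^{(m)}$ as required by Lemma \ref{localstratum}.

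The inductive step splits into two matching steps. In step (a), on multisets containing no $e$ the stratifications $T^{(m+1)}$ and $S^{(m+1)}$ coincide, because $T$ and $S$ differ only by splitting the point $e$ off from the stratum $s_0\in S$ containing it, and for a multiset avoiding $e$ the count in $s_0\setminus\{e\}$ equals the count in $s_0$. Hence the $e$-free part of $T^{(m+1)}$ equals $\{\sigma\setminus\rho_m(R^{(m)}) : \sigma\in S^{(m+1)}\}$, matching the first summand of the recursion. In step (b), the embedding $\rho_m$ carries each $T^{(m)}$-stratum bijectively onto a full $T^{(m+1)}$-stratum: adjoining one $e$ raises the $e$-multiplicity by one and fixes every other count, and every multiset with at least one copy of $e$ arises this way by removing one $e$. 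Thus, under the inductive hypothesis $\underline{S^{(m)}}=T^{(m)}$, the image $\{\rho_m(t) : t\in\underline{S^{(m)}}\}$ is exactly the part of $T^{(m+1)}$ supported on multisets containing $e$, matching the second summand. Since the two summands partition $R^{(m+1)}$ into the $e$-free multisets and those containing $e$, combining (a) and (b) gives $T^{(m+1)}=\underline{S^{(m+1)}}$ and closes the induction.

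The main obstacle I anticipate is bookkeeping at the interface rather than anything deep: one must check that restricting $S^{(m+1)}$-strata to the $e$-free locus produces the same partition as $T^{(m+1)}$ there (which is exactly the observation that $T$ and $S$ agree away from $e$), and that $\rho_m$ is not merely a set map into a stratum but a bijection onto the entire stratum, with total image the full locus of multisets having at least one copy of $e$, so that no $T^{(m+1)}$-stratum is accidentally split or merged. Once these two compatibilities are pinned down, the recursion of Lemma \ref{localstratum} determines $\underline{S^{(m+1)}}$ uniquely and the identification follows.
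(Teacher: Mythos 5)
Your proof is correct and follows essentially the same route as the paper's: an induction on $m$ against the recursive description of $\underline{S^{(m)}}$ supplied by Lemma \ref{localstratum}, splitting $R^{(m+1)}$ into the multisets containing $e$ (handled via the embedding $\rho_m$ and the inductive hypothesis) and the $e$-free multisets (where $T^{(m+1)}$ agrees with the restrictions of the $S^{(m+1)}$-strata). If anything, your version spells out the two interface compatibilities more explicitly than the paper does.
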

\begin{proof}

Take $m=0.$ Then $T^{(0)}=S^{(0)}=\underline{S^{(0)}}=\{\{e\}\}.$ 

Proceed by induction. Suppose that $T^{(i)}=\underline{S^{(i)}}.$ Using Proposition \ref{stratif} we get 
$$\lambda_i(T^{(i)})=\{\{t\cup\{e\}|t\in\tau\}|\tau\in T^{(i)}\}.$$  $\{e\}$ is a stratum of $T.$ Hence by Proposition \ref{stratif} $\lambda_i(T^{(i)})=\lambda_i(\underline{S^{(i)}})\subseteq T^{(i+1)}.$ Moreover, note that if $e\in t\in R^{(i+1)}$ then $t=\lambda_i(t\setminus\{e\})\in\lambda_i(R_i).$ Take $\tau\in T^{(i+1)}\setminus\lambda_i(\underline{S^{(i)}}).$ $\tau$ is a stratum with multiplicity $0$ on $e$ and some multiplicities $\{k_i, i\in T\}$ on other $T$-stratum, thus there exist unique $\sigma\in S^{(i+1)}$ such that $$\sigma=\tau\cup\cup_{j\in I\subset \underline{S^{(i)}}}\lambda_i(j).$$ Comparison with the construction of $\underline{S}$ (Lemma \ref{localstratum}) completes the proof.
\end{proof}

\section{Stability theories}
\begin{definition}\label{theories}
  {\it Stability theory} is a triple $S=(\CP^1,\Omega,\infty).$
  Here $\CP^1$ is considered as real algebraic variety with fixed semialgebraic subset $\Omega$ and marked point $\infty.$ %Let us fix an affine atlas of $\CN = \CP^1\setminus\{\infty\}.$
  That triple will be considered as stratified real algebraic variety with canonical stratification $$Str(S)=\{\Omega=\Omega_s, \overline{\Omega}\setminus\Omega=\Omega_{ss}, \CP^1\setminus\overline{\Omega}=\Omega_{un}\}.$$ 
  Closure is understood as closure in euclidean topology.
  
  Analogically, {\it monic stability theory} is a stratified real algebraic variety with marked point, considered as a triple $T=(\CN,\Omega,0).$ It's canonical stratification $Str(T)$ is defined by the same way.\footnote{Close theory could be actually developed for any smooth Riemann surface. Here, in order to be closer to motivations we prefer to use just 2 concrete examples.}
  
  Denote by $Str(S)^{con}$ a refinement of $Str(S)$ that consists in decomposition of each stratum into connected components.
  
  The set of connected components of $Str(S)$ stratum $\Omega_i$ will be denoted by $\Omega_{i}^{con}.$
  
  Different examples of stability theories are shown on the Figure \ref{stabreg}
 
\end{definition}
%\afterpage{\clearpage}

\begin{definition}
Let $p\in\R[i][x]$ be a polynomial. Call root $r$ of $p$ $\Omega$-stable if $p\in \Omega,$ call it $\Omega$-semistable if $p\in\overline{\Omega}\setminus\Omega,$ where closure is considered to be euclidean.
Otherwise call it $\Omega$-unstable.

Each polynomial $p$ has an {\it $\Omega$-stability} index defined as a triple $(r_s,r_{ss},r_{un}),$ $r_s+r_{ss}+r_{un}=\deg p,$ with $r_i$ being a number of roots in $i$-th region of $Str((\CP^1,\Omega,\infty)).$ 
Zero polynomial, by definition has degree continuum, with a corresponding stability index $(|\Omega|,|\overline{\Omega}\setminus\Omega|,\CN\setminus|\overline{\Omega}|).$
\end{definition}

\begin{definition}\label{affDstratif}
Define an affine $D$-stratification $D^{n}_S$ of $U^{n}$ as a most rude decomposition of $U^{n}$ into regions with the same stability index relative to stability theory $S=(\CP^1,\Omega,{\infty})$.

Let $(k,l,m)^{aff}_{S}$ be a stratum with stability index $(k,l,m).$ Denote corresponding stratification of $U$ as $D^{aff}_S=\cup_{i\in\N}D^{n}.$

$D$-stratification for the space $V$ and monic stability theories is defined by the same way.
\end{definition}
% \begin{proposition}
% Let  $\infty\in \Omega_i,\quad i\in \{s,ss,un\}.$ Let $k=(k_s,k_{ss},k_{un})_{S},$ $l=(l_s,l_{ss},l_{un})_{S}$ $ k_i\leq l_i$ be a stratum of $D^{n}_S$ and $D^{m}_S$ correspondingly.
%  
% Then they are either mutually disjoint or $k\subseteq l.$ Latter case is true iff for each $j\in\{s,ss,un\}\setminus \{i\}$ $k_j=l_j.$
% \end{proposition}

\begin{theorem}[Root-coefficient correspondence]\label{classicalpoly}
Let us fix some stability theory $S$ with stability set $\Omega.$

Then all morphisms in the following diagram below are morphisms of stratified filtered real algebraic varieties
$$
S^{\infty}\overset{\kappa}{\twoheadrightarrow}S^{(\infty)}\xrightarrow{\sim}\CP^{\infty}=({\bf P}(U),{\bf P}(D_S))\twoheadleftarrow (U,D_S)\setminus\{0\}\hookrightarrow (U,D_S)
$$ 
\end{theorem}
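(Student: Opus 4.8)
The plan is to verify, arrow by arrow, the defining conditions of a morphism of stratified filtered real algebraic varieties from Section 3: that each level map is a morphism of the underlying (semialgebraic) varieties commuting with the filtration embeddings, that every stratum of the source is carried into a single stratum of the target, and that the images of any two strata are either equal or disjoint. Three of the four arrows are essentially already built by the structural results of Section 3; the genuinely new content is the identification, along the isomorphism of Proposition \ref{proj}, of the symmetric-product stratification $S^{(\infty)}$ with the $D$-stratification ${\bf P}(D_S)$.

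For the quotient map $\kappa\colon S^{\infty}\twoheadrightarrow S^{(\infty)}$ I would invoke Proposition \ref{stratif}: the stratification $S^{(\infty)}$ is defined there precisely as the image of the product stratification under $\kappa$, so $\kappa$ sends each product stratum $s_{i_1}\times\cdots\times s_{i_n}$ onto the symmetric stratum recording the multiplicities $k_j=\#\{\ell: i_\ell=j\}$. Two product strata have equal image iff they lie in a single $\Sigma_n$-orbit and disjoint images otherwise, and the proof of Proposition \ref{stratif} already checks that this stratification agrees with the filtration; hence $\kappa$ is a morphism. For the isomorphism $S^{(\infty)}\xrightarrow{\sim}\CP^{\infty}=({\bf P}(U),{\bf P}(D_S))$, Proposition \ref{proj} supplies the isomorphism of filtered real algebraic varieties through decomposition of binary forms into linear factors, and it remains only to match strata. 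A multiset $\{r_1,\dots,r_n\}\subset\CP^1$ lies in the $S^{(\infty)}$-stratum labelled $(k_s,k_{ss},k_{un})$ exactly when it has $k_s$ points in $\Omega_s$, $k_{ss}$ in $\Omega_{ss}$, $k_{un}$ in $\Omega_{un}$ (counted with multiplicity), and the corresponding binary form has exactly $r_1,\dots,r_n$ as its roots with multiplicity, so its $\Omega$-stability index is precisely $(k_s,k_{ss},k_{un})$. Thus the isomorphism carries $S^{(\infty)}$-strata bijectively onto ${\bf P}(D_S)$-strata; and since $\rho_i$ (adjoining the marked point $\infty$) corresponds to multiplication by $y$ (adjoining a root at infinity) under the same identification, both strata and filtration are respected.

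For the remaining two arrows I would argue by scaling- and restriction-invariance of the stability index. The projection $(U,D_S)\setminus\{0\}\twoheadrightarrow{\bf P}(U)$ is the tautological quotient by the scaling action; since multiplying a polynomial by a nonzero constant changes neither its roots nor their location relative to $\Omega$, the stability index is constant on fibres, so each $D_S$-stratum maps onto a single ${\bf P}(D_S)$-stratum, these images are exactly the projectivized $D$-strata (hence pairwise equal or disjoint), and the quotient commutes with the degree filtration. The inclusion $(U,D_S)\setminus\{0\}\hookrightarrow(U,D_S)$ is the open semialgebraic embedding removing only the zero polynomial (which carries its own special index); each stratum of the source is the restriction of a stratum of the target, so all conditions are immediate.

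The main obstacle is not any single computation but the coherent bookkeeping along the whole filtration, and in particular the correct treatment of the marked point and of roots at infinity: one must check that degree change on the polynomial side (the embedding $f\mapsto fy$) matches adjoining the marked point $\infty$ on the root side, so that the stability index and the symmetric-product label stay synchronized at every level; and one must keep the quotient maps $\kappa$ and the projectivization inside the semialgebraic category, which is exactly what Propositions \ref{semialg} and \ref{proj} guarantee. Once these compatibilities are pinned down, the stratum-matching in the second arrow is a definition chase, and the theorem follows by composing the four verified morphisms.
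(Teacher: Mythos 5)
Your proposal is correct and follows essentially the same route as the paper: the filtered structure comes from Propositions \ref{proj} and \ref{stratif}, and the substantive step is identifying the symmetric-product stratification with ${\bf P}(D_S)$ by writing a nonzero polynomial of degree $r$ in $U_n$ as the binary form $c(x-\alpha_1y)\cdots(x-\alpha_ry)y^{n-r}$, so that the degree deficiency appears as roots at the marked point $\infty$ and the stability index matches the multiset label. The paper's own proof is just a terser version of your arrow-by-arrow verification.
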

\begin{proof}
Stability theory is a stratified real algebraic variety with underlying space $\CP^1,$ thus by Proposition \ref{proj} and \ref{stratif} we have a diagram of filtered real algebraic varieties.

Let us take a non-zero polynomial $p(z)$ with roots $\alpha_1,\ldots,\alpha_r$ from $U_n.$ It can be represented as binary form $y^np(\frac{x}{y})$ decomposable into product of linear forms $c(x-\alpha_1 y)\ldots(x-\alpha_{r}y)y^{n-r},\quad c\in\CN.$ Proposition \ref{proj} ensures that the strata of ${\bf P}(D^{n}_S)$ and of $S^{(n)}$ coincide.
\end{proof}

\begin{definition}\label{Dstratif}
Let $S$ be a stability theory. Then stratification on $S^{(\infty)}$ induced by $S$ will be called $D$-stratification.

Stratum with stability index $(k,l,m)$ is denoted $(k,l,m)_S.$
\end{definition}
\begin{definition}
Take some stability theory $S$ with stability set $\Omega.$ Monic stability theory $S_m$ with stability set $\frac{1}{\Omega}\setminus\{\infty\}$ is called {\it dual} to $S.$

Stability theory $T_s$ dual to the monic stability theory $T$ with stability set $\Omega$ is a stability theory, which stability set on finite points is defined as $\frac{1}{\Omega},$ and $\infty$ is semistable in $S$ iff $\Omega_{ss}$ is unbounded, while in the other cases $\infty$ belongs to the same stratum as it's sufficiently small neighborhoods. 
\end{definition}
Notion of dual stability theories gives a possibility to formulate a matrix analogue of root-coefficient correspondence and find its connection to a polynomial one, which is given by an action of the inversion on a complex projective line. 

\begin{theorem}[Matrix-polynomial duality]\label{matrixpoly}
Let us fix some stability theory $S$ with stability set $\Omega.$

Consider $Mat(\CN,\infty)$ as a space stratified by the stability theory $S_m$ into sets with the fixed number of eigenvalues belonging to the same stratum of $S_m.$

Let $\chi$ be quotient map under a filtered action of $Gl(\CN,\infty)$ given by coefficients of characteristic polynomial. 

Let  $\pi$ be a projectivisation morphism,
$diag$ be an embedding of diagonal matrices, $inv$\--- inversion and let $\iota$ be a tautological embedding of filtered spaces with monic polynomials being an affine chart for the projective space of binary forms.

Then the following diagram is commutative in category of filtered stratified real algebraic varieties:
$$
\xymatrix{
&&S^{\infty}\ar[r]^{\kappa}&S^{(\infty)}&(U,D^{aff}_S)\setminus\{0\}\ar[l]^{\pi}\\
S_m^{\infty}\ar[urr]^{inv^{\infty}}\ar[drr]^{diag}\\
&&Mat(\CN,\infty)\ar[r]^{\chi}&(V,D_{S_m})\ar[uu]^{\iota}
}
$$
\end{theorem}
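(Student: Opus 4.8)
The plan is to verify first that every arrow in the diagram is a morphism of filtered stratified real algebraic varieties, and then to check commutativity by chasing a point $(\beta_1,\ldots,\beta_n,0,\ldots)\in S_m^{\infty}$ through both composites $S_m^{\infty}\to S^{(\infty)}$. The two relevant composites are the top one $\kappa\circ inv^{\infty}$ and the bottom one $\pi\circ\iota\circ\chi\circ diag$, both landing in $S^{(\infty)}$, so the whole assertion reduces to the equality of these two.

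For the arrow-by-arrow check, $\kappa$ and $\pi$ are already known to be morphisms of filtered stratified real algebraic varieties by Theorem \ref{classicalpoly} (via Propositions \ref{proj} and \ref{stratif}). The inversion $inv\colon\CP^1\to\CP^1$, written $[x:y]\mapsto[y:x]$, is a real algebraic automorphism interchanging the marked points $0$ and $\infty$; by the very definition of the dual monic theory $S_m$ (stability set $\frac{1}{\Omega}$) it carries $Str(S_m)$ to $Str(S)$ stratum by stratum, so its componentwise extension $inv^{\infty}$ is a morphism. The map $diag$ is a closed linear embedding compatible with the size filtration, and since the eigenvalues of a diagonal matrix are its diagonal entries it respects the $S_m$-stratification of $Mat(\CN,\infty)$. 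The map $\chi$ is the matrix root–coefficient correspondence: on a diagonal matrix the characteristic polynomial is $\prod_i(z-\beta_i)$, so $\chi$ intertwines the eigenvalue stratification with $D_{S_m}$, and appending a zero row and column (the $Mat$-filtration) multiplies the characteristic polynomial by $z$, matching the $V$-filtration that appends the marked point $0$.

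The crux is the embedding $\iota$ together with the commutativity itself, and I would isolate the needed fact as a small lemma. In order for $\iota$ to be a morphism of \emph{filtered} spaces it must intertwine the $V$-filtration, which appends roots at the marked point $0$ (namely $p(z)\mapsto z\,p(z)$), with the $U$-filtration, which appends roots at $\infty$ (multiplication of the binary form by $y$). This forces the affine-chart identification to send the root at $0$ to the root at $\infty$; read off in reversed coordinates it is exactly the reciprocal-polynomial map, so the monic polynomial with root multiset $\{\beta_1,\ldots,\beta_n\}$ is sent by $\pi\circ\iota$ to the projective multiset $\{1/\beta_1,\ldots,1/\beta_n\}$ padded by copies of $\infty$, i.e. $\pi\circ\iota$ realises $\CN\hookrightarrow\CP^1$ by $\beta\mapsto 1/\beta$. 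Granting this, the chase is immediate: the top composite $\kappa\circ inv^{\infty}$ yields the multiset $\{1/\beta_i\}$ in $S^{(\infty)}$, while the bottom composite produces the characteristic polynomial $\prod_i(z-\beta_i)$ and then, through $\pi\circ\iota$, the same multiset $\{1/\beta_i\}$. The computation is uniform even when some $\beta_i=0$ or coincide (then $1/0=\infty$ is the marked point and multiplicities simply carry through), and since both composites are morphisms of semialgebraic spaces it suffices in any case to verify the identity on the dense semialgebraic subset of tuples with distinct nonzero entries and invoke continuity.

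The main obstacle I anticipate is precisely the bookkeeping around $\iota$: making rigorous the claim that the ``tautological'' affine-chart embedding is \emph{forced} by filtration-compatibility to act as inversion on the root data, and confirming that under this identification $D_{S_m}$ on $V$ pulls back to the $\Omega$-stratification on $S^{(\infty)}$ rather than to the $\frac{1}{\Omega}$-stratification. This is where the duality between $S$ and $S_m$ does its work, and where one must be careful about the placement of $\infty$ relative to $\Omega_{ss}$, i.e. the unbounded/bounded dichotomy built into the definition of the dual theory. Everything else is routine compatibility-checking between the two filtrations that append roots at $0$ and at $\infty$, and the identification of the two as dual degree-changing perturbations.
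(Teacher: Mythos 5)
Your proposal is correct and follows essentially the same route as the paper: both reduce the theorem to a point chase on a tuple $(s_1,\ldots,s_n)\in S_m^n$, computing that the top composite gives the root multiset $\{s_j^{-1}\}$ while the bottom composite gives the characteristic polynomial $\prod_j(y-s_j)$, which after homogenisation and rescaling by $(-1)^n\prod_j s_j^{-1}$ is the binary form $\prod_j(x-s_j^{-1}y)$ with the same projective root data, and then both invoke the definition of the dual theory $S_m$ (stability set $\frac{1}{\Omega}$) to match stability indices. Your extra care about why $\iota$ is forced by filtration-compatibility (roots appended at $0$ in $V$ versus at $\infty$ in $U$) to act as inversion on roots is a useful elaboration of what the paper leaves implicit in the phrase ``proceeding to a binary form,'' but it is not a different argument.
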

\begin{proof}
Take some $(s_1,\ldots,s_n)=s\in S_m^n.$ $\kappa(inv(s))$ is a sequence of coefficients of homogeneous binary form $\prod_{j=1}^n(x-s_j^{-1}y)$ defined up to non-zero complex constant multiple.
The other way around $\pi(\chi(diag(s)))$ is a sequence of coefficients of a polynomial $\prod_{j=1}^n(y-s_j).$
Proceeding to a binary form and taking a constant multiple $(-1)^n\prod_{j=1}^ns_j^{-1}$ we get the same sequence.

Thus diagram is commutative for any fixed $n\in\N$ as a diagram in a category of filtered real varieties. Note that the polynomial $\prod_{j=1}^n(x-s_j^{-1})$ have the same stability index relative to the stability theory $S$ as the stability index of $\prod_{j=1}^n(y-s_j)$ relative to the stability theory $S_m.$ This produces a morphism of stratified spaces.
\end{proof}

The essential sense of the duality is a correspondence between two types of degree-changing deformations of a polynomial, namely
$$
\begin{array}{l}
\text{Polynomials and stability theories:}\\ 
\qquad\qquad\qquad\qquad\qquad\qquad\qquad a_nz^n+\ldots +a_0\mapsto \epsilon z^{n+1}+a_nz^n+\ldots+a_0\\
\text{Matrices and monic stability theories:} \\
\qquad\qquad\qquad\qquad\qquad\qquad\qquad a_nz^n+\ldots +a_0\mapsto z(a_nz^n+\ldots +a_0)+\epsilon.
\end{array}
$$

Now we are able to build a connection between $D$-stratifications introduced here and classical concept of $D$-decomposition for a robust stability problem.

Namely, the following result is straightforward:
\begin{theorem}[$D$-stratification and $D$-decomposition]\label{DdecDstrat}
\phantom{A}
\begin{enumerate}
\item[]
 \item 
 Let $S$ be stability theory with stability set $\Omega.$ 
 Let $h_0+h_1f_1(z)+\ldots+h_rf_l(z)$ be an affine polynomial family of degree $n$ on $z.$ 
 
 That family could be seen as a morphism $\varphi\colon \R^r\to U_n.$
 Then regions of $D$-decomposition of parameter space $(h_0,\ldots,h_r)$ are connected components of $$\varphi^{-1}((k,0,n-k)^{aff}_S\cap Im\,\varphi),\quad k=0,\ldots,n$$
 Border of $D$-decomposition is $$\varphi^{-1}((\cup_{i=0}^n\cup_{k=0}^{n-i}(k,i,n-k-i))_S^{aff})\cap Im\,\varphi)$$
 \item 
  Let $S$ be a monic stability theory with stability set $\Omega.$ 

 Let $h_0A_0+\ldots+h_rA_r$ be a family of $n\times n$ matrices.
 
 That family could be seen as a morphism $\varphi\colon \R^r\to Mat(\CN,\infty).$
 Then regions of $D$-decomposition of parameter space $(h_0,\ldots,h_r)$ are connected components of $$(\chi\circ\varphi)^{-1}((k,0,n-k)^{aff}_S\cap Im\,(\chi\circ\varphi)),\quad k=0,\ldots,n.$$
 Border of $D$-decomposition is $$(\chi\circ\varphi)^{-1}((\cup_{i=0}^n\cup_{k=0}^{n-i}(k,i,n-k-i)_S)\cap Im\,(\chi\circ\varphi)).$$
\end{enumerate}

\end{theorem}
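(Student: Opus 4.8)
The plan is to prove both statements by unwinding the classical definition of $D$-decomposition and matching it term by term with the $D$-stratification of Definition \ref{affDstratif}; the only non-formal ingredient is the continuity of roots as functions of coefficients, which is precisely what Theorem \ref{classicalpoly} packages by asserting that $\varphi$ (respectively $\chi\circ\varphi$) is a morphism of stratified filtered real algebraic varieties.

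For the first part I would start from the classical description: the $D$-decomposition of the parameter space with coordinates $(h_0,\ldots,h_r)$ is obtained by deleting the \emph{boundary set} $B$ of those parameters for which the family has at least one root on $\partial\Omega=\Omega_{ss}$, and declaring the $D$-decomposition regions to be the connected components of $\R^r\setminus B$. A parameter value lies on a root-boundary crossing exactly when its $\Omega$-stability index $(r_s,r_{ss},r_{un})$ satisfies $r_{ss}\geq 1$; since the family has constant degree $n$ no root escapes to $\infty$, so $r_s+r_{ss}+r_{un}=n$ throughout. Hence $B=\varphi^{-1}\bigl(\bigl(\cup_{i\geq 1}\cup_{k=0}^{n-i}(k,i,n-k-i)\bigr)_S^{aff}\cap Im\,\varphi\bigr)$, the asserted border, and its complement is $\varphi^{-1}\bigl(\cup_{k=0}^n(k,0,n-k)^{aff}_S\cap Im\,\varphi\bigr)$.

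The one genuine step is the identification of connected components, i.e. the classical ``continuity argument.'' I would show that the index $k$ is locally constant on $\R^r\setminus B$: by Theorem \ref{classicalpoly} the root multiset depends semialgebraically and continuously on $(h_0,\ldots,h_r)$, and the strata $(k,0,n-k)^{aff}_S$ for distinct $k$ are mutually separated, their closures meeting only inside the semistable strata where $r_{ss}\geq 1$. Thus any path in $\R^r\setminus B$ along which $k$ changed would be forced through $B$, a contradiction. Consequently each connected component of $\R^r\setminus B$ sits inside a single pullback $\varphi^{-1}((k,0,n-k)^{aff}_S\cap Im\,\varphi)$, and conversely these pullbacks exhaust $\R^r\setminus B$; so the $D$-decomposition regions are exactly their connected components for $k=0,\ldots,n$, as claimed.

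For the matrix statement I would replace $\varphi$ by $\chi\circ\varphi$ and note that the eigenvalue multiset of the matrix family $h_0A_0+\ldots+h_rA_r$ is the root multiset of its characteristic polynomial, which is monic of degree $n$. By Theorem \ref{matrixpoly} the eigenvalue stability index relative to the monic theory $S$ coincides with the stability index of this characteristic polynomial, and $\chi$ is a morphism of stratified filtered real algebraic varieties; the continuity argument of the previous paragraph then applies verbatim with $\chi\circ\varphi$ in place of $\varphi$. The entire difficulty is confined to the local-constancy step: once the impossibility of the root count jumping without a boundary crossing is phrased through the stratified-morphism property of Theorem \ref{classicalpoly}, the rest of the proof is bookkeeping, which is why the result is essentially straightforward.
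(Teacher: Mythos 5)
The paper offers no proof of Theorem \ref{DdecDstrat} at all: it is introduced with the phrase ``the following result is straightforward'' and the text moves directly to the next section. Your proposal therefore fills a deliberate gap rather than paralleling an existing argument, and it does so along exactly the lines the paper intends: identify the classical border with the pullback of the strata having $r_{ss}\geq 1$ (you correctly read the union in the border formula as starting at $i=1$; as printed with $i=0$ it would be all of $Im\,\varphi$, which is evidently a typo), and then use local constancy of the stable-root count on the complement, which follows from the adjacency structure $\overline{\Omega_s}\cap\Omega_{un}=\varnothing$ together with the stratified-morphism content of Theorem \ref{classicalpoly}. The reduction of the matrix case to the polynomial one via $\chi$ and Theorem \ref{matrixpoly} is likewise the intended route.

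Two small caveats. First, your parenthetical ``since the family has constant degree $n$ no root escapes to $\infty$'' is not generally true for an affine family (the leading coefficient is an affine function of $(h_0,\ldots,h_r)$ and can vanish); but it is also not needed, because the stability theory lives on $\CP^1$ and the projective stability index already assigns roots at $\infty$ to the stratum containing $\infty$, so $r_s+r_{ss}+r_{un}=n$ holds by convention and the degree-drop locus is automatically absorbed into the correct strata. Second, your local-constancy step uses that a root cannot pass from $\Omega_s$ to $\Omega_{un}$ without entering $\Omega_{ss}$, i.e. that $\overline{\Omega_{un}}\cap\Omega_s=\varnothing$ as well as $\overline{\Omega_s}\cap\Omega_{un}=\varnothing$; the latter is automatic, but the former can fail for non-open $\Omega$ (e.g. $\Omega=\{Re\,z\leq 0\}$, where $\Omega_{ss}=\varnothing$ yet the stable and unstable counts jump across the imaginary axis). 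The theorem as stated does not assume $\Omega$ open, so you should either add that hypothesis (it is the standing assumption of classical $D$-decomposition and of most results in Section 5) or note explicitly that the argument requires each stratum of $Str(S)$ to be open in its closure, as elsewhere in the paper. With that proviso, the proof is correct.
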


\section{Topology of a $D$-stratum}
Following lemma allows to reduce topology of $D$-strata to the topology of strata of the stability theory, hence forming a basis for the consequent considerations.

\begin{lemma}\label{dispower}
Let $S$ be a connected semialgebraic space. Suppose that $S=\cup_{i=1}^mS_i$ is a union of semialgebraic subspaces such that $S_i\cap S_j=\varnothing$. Suppose that for each $i\in\{1,\ldots,m\}$ $S_i$ is either open or closed.
Take some $\lambda=(\lambda_1,\ldots,\lambda_m)\in\N^m,$ such that $\sum_{i=1}^m\lambda_i=k.$

Define a map 
$$\varphi\colon \prod_{i=1}^mS_i^{(\lambda_i)}\to S^{(k)},\quad(\{s^i_1,\ldots s^i_{\lambda_i}\})_{i\in\{1,\ldots,m\}}\mapsto \cup_{i=1}^m\{s^i_1,\ldots s^i_{\lambda_i}\}.$$

Then $\varphi$ is a semialgebraic homeomorphism onto image.

%Moreover $\varphi$ is semialgebraic
\end{lemma}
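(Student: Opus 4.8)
The plan is to check in turn that $\varphi$ is well defined and semialgebraic, that it is injective with an explicitly describable image, and finally — the heart of the matter — that the splitting map inverse to $\varphi$ is continuous, which is exactly where the hypothesis that each $S_i$ is open or closed is used. First I would produce $\varphi$ as a quotient of an evident map on ordered tuples: concatenating representatives gives a semialgebraic map $\prod_{i=1}^m S_i^{\lambda_i}\to S^k$ (each inclusion $S_i\hookrightarrow S$ being semialgebraic), and postcomposing with the quotient $\kappa_k\colon S^k\to S^{(k)}$ gives a semialgebraic map constant on $\prod_i\Sigma_{\lambda_i}$-orbits. Since each $\kappa_{\lambda_i}$ is a proper semialgebraic quotient (Proposition \ref{semialg}), the universal property of the quotient topology makes the induced $\varphi$ continuous and semialgebraic.

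For injectivity and the image I would use that $S=\sqcup_i S_i$ is a partition, so every multiset $M\in S^{(k)}$ decomposes uniquely as $M=\sqcup_i (M\cap S_i)$, where $M\cap S_i$ is the sub-multiset of those points of $M$ lying in $S_i$. This makes $\varphi$ injective with image exactly $\{M\in S^{(k)}\mid |M\cap S_i|=\lambda_i\ \text{for all}\ i\}$, the multisets with the prescribed distribution among the strata, and it identifies the candidate inverse as the splitting map $\psi\colon M\mapsto (M\cap S_i)_{i}$.

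The crux is continuity of $\psi$ on the image. Working over $\R$, where semialgebraic sets are metrizable and sequential continuity suffices, convergence $M_n\to M_0$ in $S^{(k)}$ means one can choose ordered representatives $x^n=(x^n_1,\dots,x^n_k)$ and $x^0=(x^0_1,\dots,x^0_k)$ with $x^n_j\to x^0_j$ for each $j$; take all $M_n$ and $M_0$ in the image. For an open stratum $S_i$ and any index $j$ with $x^0_j\in S_i$, openness forces $x^n_j\in S_i$ eventually; since there are exactly $\lambda_i=|M_0\cap S_i|$ such indices and $|M_n\cap S_i|=\lambda_i$, the sub-multiset $M_n\cap S_i$ must eventually consist precisely of these points, so no index with $x^0_j\notin S_i$ can have $x^n_j\in S_i$. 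Consequently, for any $j$ whose limit lies in a closed stratum $S_i$, the point $x^n_j$ eventually avoids every open stratum, hence lies in the union of the closed strata; and it cannot lie infinitely often in a closed stratum $S_{i'}$ with $i'\neq i$, since closedness of $S_{i'}$ together with $x^n_j\to x^0_j$ would force $x^0_j\in S_{i'}$, contradicting disjointness. Therefore $x^n_j\in S_{i}$ eventually for every $j$, whence $M_n\cap S_i\to M_0\cap S_i$ in $S_i^{(\lambda_i)}$ for each $i$, i.e. $\psi$ is continuous. To conclude I would note that a semialgebraic bijection automatically has a semialgebraic inverse (swap the two factors of its semialgebraic graph), so $\psi$ is semialgebraic, and hence $\varphi$ is a semialgebraic homeomorphism onto its image.

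The main obstacle I anticipate is precisely this last continuity step: controlling, under the symmetric-product topology, the possible migration of points between strata along a convergent sequence of multisets. The open-or-closed dichotomy is what makes the bookkeeping close — open strata cannot lose limit points and, by the fixed count $\lambda_i$, cannot gain spurious ones, while closed strata trap their own points — and I expect that without this hypothesis a point approaching a stratum from its topological boundary would break the count argument and $\psi$ would fail to be continuous.
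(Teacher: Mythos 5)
Your proof is correct, but it reaches the conclusion by a different technical route than the paper. Where you invert $\varphi$ directly --- proving sequential continuity of the splitting map $\psi\colon M\mapsto (M\cap S_i)_i$ by lifting a convergent sequence of multisets to convergent ordered representatives and then tracking which stratum each coordinate eventually lands in --- the paper never mentions the inverse. It instead sets $Q=\prod_i S_i^{\lambda_i}$, observes that $\kappa_k$ restricted to $\kappa_k^{-1}(\mathrm{Im}\,\varphi)=\Sigma_k Q$ is closed (quotient by a finite group), and proves the static separation statement that $\overline{Q}\cap\sigma Q=\varnothing$ whenever $\sigma Q\neq Q$; this makes $\kappa_k|_Q$ closed, and since $\kappa_k|_Q=\varphi\circ(\prod_i\kappa_{\lambda_i})|_Q$ with the second factor a closed surjection, $\varphi$ is closed and continuous, hence a homeomorphism onto its image. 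The combinatorial heart is the same in both arguments --- a case analysis on pairs of strata in which disjoint open strata are separated, closed strata trap their own points, and the mixed case is resolved by a counting/pigeonhole step (your ``exactly $\lambda_i$ indices are already accounted for'' is precisely the paper's ``there must be another component mapped the other way''). What the paper's closed-map formulation buys is independence from metrizability and sequences: it survives verbatim in the abstract Delfs--Knebusch category of semialgebraic spaces over an arbitrary real closed field, where sequential continuity is meaningless, whereas your argument is tied to the euclidean topology over $\R$ (there it is perfectly rigorous, since $S^{(k)}$ carries the quotient metric $\min_\sigma\max_j d(x_j,y_{\sigma(j)})$ and representatives can indeed be chosen to converge coordinatewise). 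Conversely, your version is more transparent about \emph{why} the lemma is true and explicitly identifies the image as the set of multisets with prescribed distribution $|M\cap S_i|=\lambda_i$, which the paper leaves implicit. For semialgebraicity the paper leans on Propositions 6.12, 7.9 and Theorem 6.10 of Delfs--Knebusch rather than your graph-swap observation; your route is fine provided you then note, as you do, that continuity must be checked separately since a semialgebraic graph alone does not give it.
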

\begin{proof}

Since $S_i$ are pairwise disjoint, $\varphi$ is injective. 
Denote $\prod_{i=1}^mS_i^{\lambda_i}$ as $Q$.

Since $\kappa_k$ is continuous, $\kappa_{k}|_{\prod_{i=1}^mS_i^{\lambda_i}}$ is also continuous.

Let us prove that $\kappa_{k}|_{Q}$ is closed. Note that $\kappa_k|_{\kappa_{k}^{-1}(Im\,\varphi)}$ is closed as a quotient map by a finite group action.
It is easy to see that $\Sigma_kQ=\kappa_{k}^{-1}(Im\,\varphi)$ and, moreover for each $\sigma\in \Sigma_k$ either $\sigma Q=Q$ or $\sigma(Q)\cap Q=\varnothing.$

Prove that for each $\sigma_0,\sigma_1\in \Sigma_k$ if $\sigma_1Q\neq\sigma_2Q$  then $\sigma_1Q$ and $\sigma_2Q$ are separated. It is sufficient to prove that for $\sigma_1=e, \sigma=\sigma_2\not\in\prod_{i=1}^m\Sigma_{\lambda_i}.$
Denote by $\pi_j\colon S^k\to S$ a canonical projection onto $j$-th component of product.

Suppose that there exist $x\in \overline{Q}\cap Q.$ It is equivalent to the assumption that 
for each $j\in\{1,\ldots,k\}$ $\pi_j(x)\in\overline{S_r}$ and $\pi_j(x)\in S_q,$ where $S_r$ and $S_q$ are $j$-th components of $Q$ and $\sigma Q$ respectively.

There are $3$ possible cases.
\begin{enumerate}
 \item $S_r$ is closed, then either $\overline{S_r}\cap S_q=S_r\cap S_q=\varnothing$ or $S_r=S_q,$ 
 \item $S_r$ and $S_q$ are open. Hence they are either separated or equal,
 \item $S_r$ is open, $S_q$ is closed. Then $\sigma$ maps coordinate corresponding to the closed $S_q$ to the component corresponding to the open $S_r.$ Hence there exist another component, with coordinate corresponding to the open $S_t$ mapped to the coordinate corresponding to the closed $S_h.$ So we are in the first case, while equality is not possible, as $S$ is connected. 
\end{enumerate}

Hence if $Q\neq\sigma Q$ then each point from $\overline{Q}$ differs from points of $Q$ on at least one coordinate. 
Therefore if $G$ is closed in $Q$ then it is closed in $\kappa_{k}^{-1}(Im\,\varphi).$ Therefore $\kappa_k|_{Q}$ is a continuous closed map.
Consider the map $(\prod_{i=1}^m\kappa_{\lambda_i})|_{Q}.$ It is a quotient map for a finite group action. Hence it is continuous and closed.

Note that 
$$\varphi\circ(\prod_{i=1}^m\kappa_{\lambda_i})|_{Q}=\kappa_{k}|_{Q}.$$

Hence $\varphi$ is continuous and closed. Hence $\varphi$ is a homeomorphism onto image.

Note that by Proposition \ref{semialg} $\kappa_{k}$ and $\kappa_{\lambda_i}$ are semialgebraic. Proposition 6.12 \cite{DK1981} ensures that $(\prod_{i=1}^m\kappa_{\lambda_i})$ is semialgebraic.  By Theorem 6.10 \cite{DK1981} $\kappa_{k}|_{Q}$ and $(\prod_{i=1}^m\kappa_{\lambda_i})|_{Q}$ are semialgebraic. Since $(\prod_{i=1}^m\kappa_{\lambda_i})|_{Q}$ and $\kappa_{k}$ are surjective and $\varphi$ is injective, using Proposition 7.9 \cite{DK1981} we obtain that $\varphi$ is semialgebraic.

\end{proof}

To cover wider class of possible stability theories, such as aperiodicity, we need to make a slight formal generalisation of Lemma \ref{dispower}. 

\begin{lemma}\label{dispowertweak}
Let $S$ be a connected semialgebraic space. 

Suppose that $S=\cup_{i=1}^m\cup_{j=1}^{t_i}S^j_i$ is a union of semialgebraic subspaces such that $S^j_i\cap S^q_r=\varnothing$. Suppose that for each $i\in\{1,\ldots,m\}$ $\cup_{j=1}^{t_i} S^j_i,$  is either open or closed in $S$ and for each $j\in\{1,\ldots t_i\}$ $S^j_i$ is open or closed in $\cup_{j=1}^{t_i} S^j_i.$
Take some $$\lambda=(\lambda^1_1,\ldots,\lambda_1^{t_1},\ldots,\lambda_m^{1},\ldots,\lambda_m^{t_m})\in\N^{\sum_{i=1}^mt_i},$$ such that $\sum_{j=1}^{t_i}\lambda^j_i=k_i.$

Define a map 
$$\begin{array}{l}
\varphi\colon \prod_{i=1}^{m}\prod_{j=1}^{t_i}{S^j_i}^{(\lambda^j_i)}\to S^{(\sum_{i=1}^mk_i)},
\\
\varphi\colon(\{s^i_{j,1},\ldots s^{i}_{j,\lambda^{j}_{i}}\})_{i\in\{1,\ldots,m\},j\in\{1,\ldots,t_i\}}\mapsto \cup_{i=1}^{m}\cup_{j=1}^{t_i}\{s^i_{j,1},\ldots s^i_{j,\lambda^j_i}\}.\end{array}$$

Then $\varphi$ is a semialgebraic homeomorphism onto image.
\end{lemma}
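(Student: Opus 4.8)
The plan is to reduce Lemma \ref{dispowertweak} to the already-established Lemma \ref{dispower} by factoring the gluing map $\varphi$ through an intermediate symmetric product, thereby exploiting the two nested levels of the hypothesis one at a time. Write $T_i=\cup_{j=1}^{t_i}S^j_i$ and $k_i=\sum_{j=1}^{t_i}\lambda^j_i$, and set $K=\sum_{i=1}^m k_i$. Introduce the inner gluing maps
$$\varphi_i\colon \prod_{j=1}^{t_i}(S^j_i)^{(\lambda^j_i)}\to T_i^{(k_i)},\qquad (\{s^i_{j,\cdot}\})_j\mapsto \cup_{j=1}^{t_i}\{s^i_{j,\cdot}\},$$
and the outer gluing map
$$\psi\colon \prod_{i=1}^m T_i^{(k_i)}\to S^{(K)},\qquad (M_i)_i\mapsto \cup_{i=1}^m M_i.$$
A direct check shows $\varphi=\psi\circ\bigl(\prod_{i=1}^m\varphi_i\bigr)$, so it suffices to prove that $\psi$ and each $\varphi_i$ are semialgebraic homeomorphisms onto their images: such maps are closed under products and composition (the restriction of a homeomorphism-onto-image to a subspace is again one, and semialgebraic maps compose and form products by the results of \cite{DK1981} already used in Lemma \ref{dispower}).

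First I would dispose of the outer map $\psi$. The sets $T_i$ are pairwise disjoint and, by hypothesis, each $T_i=\cup_{j=1}^{t_i}S^j_i$ is open or closed in the connected space $S$. Hence Lemma \ref{dispower}, applied to the decomposition $S=\cup_{i=1}^m T_i$ with exponents $(k_1,\ldots,k_m)$, yields immediately that $\psi$ is a semialgebraic homeomorphism onto its image.

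The inner maps carry the real content, and they are also where I expect the main obstacle. Fix $i$. The pieces $S^1_i,\ldots,S^{t_i}_i$ are pairwise disjoint and each is open or closed in $T_i$, which is precisely the shape of the hypothesis of Lemma \ref{dispower} with $T_i$ as ambient space, save for one point: Lemma \ref{dispower} also demands connectedness of the ambient space, whereas $T_i$ need not be connected (a closed $T_i$ may, for instance, split into several clopen pieces). Bridging this connectivity gap is the crux. I would close it by reinspecting the proof of Lemma \ref{dispower}: connectedness is invoked there only to keep the open/closed alternative clean and to run the separation step, which shows that for $\sigma\notin\prod_j\Sigma_{\lambda^j_i}$ the blocks $Q$ and $\sigma Q$ are separated. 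That separation rests solely on the elementary facts that the closure of an open piece misses any disjoint open piece and that a closed piece equals its own closure; a short counting argument over the coordinates (matching, for each piece, the number of coordinates it occupies in $Q$ and in $\sigma Q$) then forces $\sigma$ to preserve the piece-labelling, a contradiction. None of this uses connectedness of the ambient space, so the conclusion of Lemma \ref{dispower} holds verbatim for the possibly disconnected $T_i$, giving each $\varphi_i$ as a semialgebraic homeomorphism onto its image.

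Assembling the pieces, $\prod_{i=1}^m\varphi_i$ is a semialgebraic homeomorphism onto its image as a product of such maps, and composing with $\psi$ transfers this to $\varphi$, completing the argument. As an alternative to reinspecting the separation step, one could instead pass to the connected components of each $T_i$: every $S^j_i$ restricts to an open or closed subset of each component, so Lemma \ref{dispower} applies component-wise, and the cross-component assembly is a gluing along clopen pieces, itself again covered by Lemma \ref{dispower}. I expect this component-wise route to be more bookkeeping-heavy, so I would prefer the direct reinspection above.
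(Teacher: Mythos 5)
Your factorization $\varphi=\psi\circ\bigl(\prod_{i}\varphi_i\bigr)$ with Lemma \ref{dispower} applied to the outer map $\psi$ and to each inner map $\varphi_i$ is exactly the paper's proof, which simply asserts that all of these maps are semialgebraic homeomorphisms by Lemma \ref{dispower} and composes them via the cited results of \cite{DK1981}. Your further observation that $\cup_{j=1}^{t_i}S^j_i$ need not be connected --- so that Lemma \ref{dispower} does not literally apply to the inner maps --- flags a point the paper passes over silently, and your patch is sound: the separation step only needs that at every coordinate where the two blocks differ either the first piece is closed or the second is open, and the multiset-counting argument rules out the remaining bad case without any appeal to connectedness of the ambient space.
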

\begin{proof}
Consider a family of morphisms 
$$
\varphi_{i},\colon \prod_{j=1}^{t_i}(S^j_i)^{(\lambda^j_i)}\to(\cup_{j=1}^{t_i} S^j_i)^{(k_i)},\quad i=1,\ldots,m
$$
and a morphism
$$
\psi\colon \prod_{i=1}^m(\cup_{j=1}^{t_i} S^j_i)^{(k_i)}\to S^{(\sum_{i=1}^{m}k_i)}  
$$

$\varphi_1,\ldots,\varphi_m,\psi$ are semialgebraic homeomorphisms by Lemma \ref{dispower}. 

Hence, by  Proposition 6.12 \cite{DK1981} and Theorem 6.10 \cite{DK1981}, $\psi(\varphi_1,\ldots,\varphi_m)$ is a semialgebraic homeomorphism. 

Now it is enough to note that $\varphi=\psi(\varphi_1,\ldots,\varphi_m).$
\end{proof}
Following proposition is a direct consequence of Lemma \ref{dispowertweak}.
\begin{proposition}\label{unmixing}
Let $S$ be a non-trivial (monic) stability theory. Suppose that each stratum of $Str(S)$ is open in its closure.
 Let $(k,l,m)_{S}$ be a $D_n^S$-stratum. Then it is semialgebraically homeomorphic %semialgebraically homeomorphic
 to $$\Omega_s^{(k)}\times\Omega_{ss}^{(l)}\times\Omega_{un}^{(m)}.$$
\end{proposition}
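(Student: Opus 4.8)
The plan is to deduce the statement directly from Lemma \ref{dispowertweak}, whose two-level structure is tailor-made for exactly this situation. The homeomorphism will be realised by the union-of-multisets map $\varphi$ of that lemma, which sends a triple of multisets, one lying in each stratum, to their union inside $S^{(n)}$ with $n=k+l+m$ (here $S^{(n)}=(\CP^1)^{(n)}$, or $\CN^{(n)}$ in the monic case, as identified in Theorem \ref{classicalpoly}). The ambient space $\CP^1$ (resp.\ $\CN$) is connected, so the connectedness hypothesis of the lemma is satisfied.

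The key step is to choose the grouping of the three strata correctly. Running Lemma \ref{dispower} na\"ively with $\Omega_s,\Omega_{ss},\Omega_{un}$ as a single-level decomposition fails, because $\Omega_{ss}=\overline{\Omega}\setminus\Omega$ need be neither open nor closed in $\CP^1$ (for hyperbolicity, for instance, $\Omega_{ss}$ is a line). Instead I would group the strata as $\{\Omega_s,\Omega_{ss}\}$ and $\{\Omega_{un}\}$. For the first group the union is $\Omega_s\cup\Omega_{ss}=\overline{\Omega}$, which is closed in $\CP^1$; for the second the union is $\Omega_{un}=\CP^1\setminus\overline{\Omega}$, which is open. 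This checks the outer open-or-closed condition of Lemma \ref{dispowertweak}. For the inner condition I would observe that, since $\overline{\Omega_s}=\overline{\Omega}$, the hypothesis that $\Omega_s$ is open in its closure says precisely that $\Omega_s$ is open in $\overline{\Omega}$; its complement $\Omega_{ss}=\overline{\Omega}\setminus\Omega_s$ is then closed in $\overline{\Omega}$, and $\Omega_{un}$ is trivially open in itself. This is the only place where the hypothesis ``open in its closure'' is used.

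With this grouping, assigning the exponents $k$ to $\Omega_s$, $l$ to $\Omega_{ss}$ and $m$ to $\Omega_{un}$, Lemma \ref{dispowertweak} yields that $\varphi\colon \Omega_s^{(k)}\times\Omega_{ss}^{(l)}\times\Omega_{un}^{(m)}\to S^{(n)}$ is a semialgebraic homeomorphism onto its image. It remains to identify this image with the $D$-stratum $(k,l,m)_S$. Since $\Omega_s,\Omega_{ss},\Omega_{un}$ are pairwise disjoint, every degree-$n$ multiset of stability index $(k,l,m)$ splits uniquely into its parts in the three regions, so $\mathrm{Im}\,\varphi$ is exactly the set of such multisets, which is $(k,l,m)_S$ by Definition \ref{Dstratif}. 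Hence the $D$-stratum is semialgebraically homeomorphic to the stated product. The monic case is verbatim the same, with $\CN$ in place of $\CP^1$.

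The main obstacle here is conceptual rather than computational: recognising that the lemma must be invoked with the two-level grouping above, and that ``open in its closure'' is exactly the hypothesis that renders $\Omega_s$ open, and therefore $\Omega_{ss}$ closed, inside $\overline{\Omega}$. Everything else — the injectivity of $\varphi$ and the semialgebraicity bookkeeping — is already packaged inside the statement of Lemma \ref{dispowertweak}.
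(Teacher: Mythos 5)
Your proof is correct and follows exactly the route the paper takes: the paper's own ``proof'' is the single sentence that the proposition is a direct consequence of Lemma \ref{dispowertweak}, and your two-level grouping $\{\Omega_s,\Omega_{ss}\}$ (union $\overline{\Omega}$, closed) versus $\{\Omega_{un}\}$ (open), with ``open in its closure'' supplying openness of $\Omega_s$ and hence closedness of $\Omega_{ss}$ inside $\overline{\Omega}$, is precisely the instantiation the paper leaves implicit. Your identification of $\mathrm{Im}\,\varphi$ with $(k,l,m)_S$ is likewise the intended one.
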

One can also get a refined version of that proposition:
\begin{proposition}\label{unmixingconnected}
Let $S$ be a non-trivial (monic) stability theory. Suppose that each stratum of $Str(S)$ is open in its closure. % such that either $\Omega_s$ or $\Omega_{ss}$ are non-empty closed sets.
% 
% Let $Q=\{Q_1,\ldots, Q_r\}$ be a set of connected components of $\Omega_{ss}.$ Denote as $\Omega_{s}=\sqcup_{u\in U} u$ decomposition of $\Omega_{s}$ into connected components, as $\Omega_{un}=\sqcup_{v\in V} v$ decomposition of $\Omega_{un}.$

Then $(k,l,m)_{S}$ could be decomposed into disjoint union of connected components of type 

$$R\cong\prod_{i\in h_R\subseteq {\Omega_s}^{con}}i^{\lambda_i}\prod_{i\in t_R\subseteq {\Omega_{ss}}}i^{\lambda_i}\prod_{i\in w_R\subseteq {\Omega_{un}}^{con}}i^{\lambda_i}.$$

Here $\sum_{i\in h_R}\lambda_i=l,$ $\sum_{i\in t_R}\lambda_i=k,$ $\sum_{i\in w_R}\lambda_i=m,$ and $R$ varies over all possible triples of partitions.
\end{proposition}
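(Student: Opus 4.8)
The plan is to refine Proposition \ref{unmixing} by passing from the three coarse strata $\Omega_s,\Omega_{ss},\Omega_{un}$ to their connected components and then applying the stronger Lemma \ref{dispowertweak} in place of Lemma \ref{dispower}. Proposition \ref{unmixing} already gives a semialgebraic homeomorphism $(k,l,m)_S\cong\Omega_s^{(k)}\times\Omega_{ss}^{(l)}\times\Omega_{un}^{(m)}$, so the only new content is to split each factor $\Omega_i^{(\bullet)}$ into a product over the connected components of $\Omega_i$. First I would observe that the decomposition $S=\bigcup_{i=1}^m\bigcup_{j=1}^{t_i}S_i^j$ required by Lemma \ref{dispowertweak} is exactly met by taking the three families indexed by $i\in\{s,ss,un\}$ as the outer layer (each $\Omega_i$ is open in its closure by hypothesis, hence $\bigcup_j S_i^j=\Omega_i$ is open or closed in $S=\CP^1$), and the connected components $\Omega_i^{con}$ of each $\Omega_i$ as the inner layer $S_i^j$. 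Since each $\Omega_i$ is locally connected (being semialgebraic), its connected components are open in $\Omega_i$, and being pairwise disjoint they are also closed in $\Omega_i$; this verifies the per-component openness-or-closedness hypothesis of Lemma \ref{dispowertweak}.

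Next I would fix a $D$-stratum $(k,l,m)_S$ and analyze how it breaks into pieces. A point of $(k,l,m)_S$ is a multiset of $k+l+m$ points of $\CP^1$ with prescribed stability index; distributing the $k$ stable roots among the components $\Omega_s^{con}$, the $l$ semistable roots among $\Omega_{ss}^{con}$, and the $m$ unstable roots among $\Omega_{un}^{con}$ amounts to choosing, for each component, a nonnegative multiplicity $\lambda_i$, subject to the constraints $\sum_{i\in h_R}\lambda_i=k$, $\sum_{i\in t_R}\lambda_i=l$, $\sum_{i\in w_R}\lambda_i=m$. Each such choice of triple of partitions (more precisely, each assignment of multiplicities to components) determines a subset on which the roots occupy a fixed collection of components, and Lemma \ref{dispowertweak} identifies that subset semialgebraically with the product $\prod_i i^{(\lambda_i)}$ of the corresponding symmetric powers of the components. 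The disjoint union over all admissible multiplicity assignments then recovers the whole stratum. I would note in passing that the statement as written has a minor indexing discrepancy (the roles of $k,l,m$ versus $h_R,t_R,w_R$ appear transposed relative to the convention $\Omega_s^{(k)}\times\Omega_{ss}^{(l)}\times\Omega_{un}^{(m)}$ of Proposition \ref{unmixing}); I would align the constraint labels so that the stable-component multiplicities sum to $k$, and so on.

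The one genuine point requiring care, and the step I expect to be the main obstacle, is establishing that distinct multiplicity assignments give genuinely \emph{separated} (not merely disjoint) pieces, so that they are the actual connected components and the union is a decomposition into clopen pieces. This is precisely the separation analysis already carried out inside the proof of Lemma \ref{dispower}, where the three-case argument (closed--closed, open--open, open--closed) shows that two orbits $Q$ and $\sigma Q$ are either equal or separated. Since each component $i$ is connected, each factor $i^{(\lambda_i)}$ is connected (a continuous image of the connected space $i^{\lambda_i}$ under $\kappa_{\lambda_i}$), hence each product $R\cong\prod_i i^{(\lambda_i)}$ is connected; combined with the separation furnished by Lemma \ref{dispowertweak}, each $R$ is therefore a full connected component of the stratum. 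Assembling these observations, the stratum is the disjoint union of the connected pieces $R$ indexed by all admissible triples of partitions, which is exactly the asserted decomposition.
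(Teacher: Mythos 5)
Your proposal is correct and follows essentially the same route as the paper, whose entire proof is the one-line observation that the connected components of strata that are open in their closures again have this property, so that Lemma \ref{dispowertweak} applies to the refined decomposition $Str(S)^{con}$. The additional details you supply (local connectedness of semialgebraic sets, connectedness of each product piece, the separation argument identifying the pieces as genuine connected components) are correct elaborations of what the paper leaves implicit, and your observation that the constraints $\sum_{i\in h_R}\lambda_i=l$ and $\sum_{i\in t_R}\lambda_i=k$ have $k$ and $l$ transposed relative to the convention of Proposition \ref{unmixing} is also accurate.
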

\begin{proof}
Suppose that strata of $Str(S)$ are either open in its closure in the ambient space. Then strata of $Str(S)^{con}$ also have this property. Hence Lemma \ref{dispowertweak} is applicable.   %question there
\end{proof}
\begin{definition}
Let $T^n=(S^1)^n$ be an $n$-dimensional torus. Denote by $q$-skeleton of $n$-torus $T^n_q$ a union of all $q$-dimensional coordinate subtorii $$\bigcup_{I\subseteq \{1,\ldots,n\}, |I|=q}\{(s_1,\ldots, s_n)\in T^n| \forall i\in I\,s_i=1\}.$$ 

Examples of skeleta of tori could be seen on Figure 1.2.
\end{definition}

Now we are able to describe topology of $D$-strata.

\begin{lemma}\label{setgraph}
 Let $S$ be a connected semialgebraic subset of the real plane $\R^2=\CN.$ Then there exist a connected graph $G(S)$ homotopy equivalent to the  $S.$ 
 Moreover $S$ is homotopy equivalent to the bouquet of $b_1(S)$ circles, where $b_1$ is the first Betti number of $S.$ 
\end{lemma}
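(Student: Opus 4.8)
The plan is to reduce the statement to combinatorics of a finite simplicial complex via a semialgebraic triangulation, and then to exploit planarity to strip away all two-dimensional cells up to homotopy. First I would invoke the semialgebraic triangulation theorem (see \cite{BPR1998}): the connected semialgebraic set $S\subseteq\R^2$ is semialgebraically homeomorphic to the geometric realisation $|K|$ of a finite simplicial complex $K$, necessarily connected and of dimension at most $2$ since $S$ lies in the plane. The composite $|K|\xrightarrow{\sim}S\hookrightarrow\R^2$ realises $K$ as a topologically embedded planar complex. Two features of this embedding will be used: the images of distinct open $2$-simplices are disjoint open subsets of $\R^2$ (the image of an open $2$-simplex is a $2$-dimensional semialgebraic subset of the plane, hence has nonempty interior, and the triangulation is injective), and the union of the images of the closed $2$-simplices is a compact planar set.

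Next I would show that $|K|$ is homotopy equivalent to a graph by removing the $2$-simplices one at a time through elementary collapses. At each stage, if any $2$-simplex remains, let $P$ be the union of the closed images of the remaining $2$-simplices; then $P$ is compact and is the closure of a nonempty open subset of $\R^2$, so its frontier in $\R^2$ is nonempty. A $1$-simplex lying on this frontier is a face of exactly one of the remaining $2$-simplices, hence a free face, and the pair consisting of this edge and its $2$-simplex admits an elementary collapse. Such a collapse is a homotopy equivalence, preserves connectedness, and leaves a subcomplex that is again planar with one fewer $2$-simplex. Iterating, after finitely many steps no $2$-simplices remain and we reach a connected graph $G(S)$ with $|K|\simeq G(S)$, whence $S\simeq G(S)$.

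Finally, the second assertion is immediate from the structure theory of graphs (see \cite{Hat2002}): choosing a spanning tree of the connected finite graph $G(S)$ and collapsing it yields a homotopy equivalence onto a wedge of circles whose number equals $b_1(G(S))$; since homotopy equivalence preserves Betti numbers, this number is $b_1(S)$.

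The main obstacle I anticipate is the plane-topology input in the collapsing step, namely the claim that an edge on the frontier of the two-dimensional part is incident to a single two-simplex. This requires that, although the triangulation is only a topological embedding, two two-simplices sharing an edge sit on opposite sides of it near the interior of that edge; establishing this cleanly, and dealing with vertices where the two-dimensional part is pinched (so that $K$ fails to be a manifold), is where care is needed. An alternative that sidesteps the delicate local analysis is to prove $H_2(S)=0$ directly from the disjointness of open-simplex images — a nonzero $2$-cycle would leave an uncancelled frontier edge in its boundary — and then to combine planarity, which forces $\pi_1$ to be free and $|K|$ to be aspherical, with Whitehead's theorem to exhibit the homotopy equivalence with a graph; I would fall back on this homological route should the collapsing argument prove awkward to write rigorously.
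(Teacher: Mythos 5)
Your core argument is sound and takes a genuinely different route from the paper's. The paper, after reducing to the bounded case via Corollary 9.3.7 of \cite{BPR1998} and triangulating, invokes the Jordan--Brouwer separation theorem to attach to each $2$-dimensional component one external and finitely many internal borders, then performs an informal ``blowing'' procedure to merge all $2$-dimensional regions into one and to convert the $t$ internal borders into a bouquet of $t$ circles with trees attached, finally citing Stillwell for the reduction of a connected graph to a wedge of circles. Your elementary-collapse argument replaces the Jordan--Brouwer and blowing steps with a cleaner induction: since two affine $2$-simplices of a planar complex sharing an edge lie on opposite sides of it, an edge lying on the frontier of the union $P$ of the remaining closed $2$-simplices is a free face, and the frontier is nonempty and not zero-dimensional as long as $P$ has nonempty interior, so a collapse is always available. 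This is more rigorous than the paper's treatment and handles pinch points automatically, since collapsibility is a purely combinatorial local condition. One inaccuracy you should repair: a connected semialgebraic subset of $\R^2$ is in general \emph{not} semialgebraically homeomorphic to the geometric realisation of a finite simplicial complex --- an open disc is connected and semialgebraic but not compact. The triangulation theorem only identifies $S$ with a union of \emph{open} simplices of a finite planar complex; what you need (and what the paper asserts) is the weaker statement that $S$ is \emph{homotopy equivalent} to a finite planar complex, which requires first passing to a bounded representative and then a retraction argument onto a subcomplex. Since your collapsing procedure uses only the planarity of $K$ and a homotopy equivalence $S\simeq|K|$, this substitution does not affect the rest of your proof. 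Your homological fallback is less convincing as written ($H_2(S)=0$ does not by itself give $\pi_2=0$, and ``planarity forces $\pi_1$ free'' is itself a nontrivial theorem usually proved by exactly the graph reduction you are trying to establish), but it is not needed.
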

\begin{proof}
Note that by Corollary 9.3.7 \cite{BPR1998} we can assume $S$ to be bounded. Hence, by Triangulation Theorem \cite{Cos2000} $S$ is homotopy equivalent to some finite planar simplicial complex $K.$

Jordan-Brouwer separation theorem \cite[ Ch. 4, Sec. 7, Theorem 15]{Spa1966} assures that each $2$-dimensional component of complex has $1$ connected external border, and, possibly some internal borders. Let there be $t$ internal borders for all of the $K$

Now we able to proceed with the geometric construction.

Take $2$-dimensional components $A$ and $B$ connected by the $1$-dimensional path $C.$ We can blow $C$ and move all internal borders through the $C$ to $A$ or $B.$ As $K$ is connected we can transform $K$ into a finite simplicial complex with only $1$ connected $2$-dimensional region.
Any internal border is a union of simplices. Hence it could be seen as a set of empty borders of $2$-simplices, possibly with some $1$-complexes attached from the inside, 

Blowing these borders of $2$-simplices, moving them accordingly, we get a homotopy equivalence of $K$ to the bouquet of $t$ circles with some $1$-complexes attached. Transforming loops into cycles of minimal length and reducing all paths to a minimal possible length we've obtained a graph $G(S).$

Applying \cite[ Ex. 3.3.2.1]{Sti1993} we get the Lemma. 
\end{proof}
\begin{definition}[\cite{Ber1962}]
Recall that {\it cyclomatic number} $cycl(R)$ of the graph $R$ with $m$ edges, $n$ vertices and $c$ connected components is the number $m-n+c$ which is equal to the number of edges that does not belong to the spanning forest of $R.$% First Betti number of $b_1(R)=cycl(R).$ 
\end{definition}
\begin{theorem}[Homotopy type of a $D$-stratum]\label{hott}
Let $S$ be a non-trivial %topologically simple 
(mo\-nic) stability theory. Suppose that each stratum of $Str(S)$ is open in its closure.

Denote by $\vee$ an operation of taking bouquet of pointed topological spaces (gluing them at the marked point).

Let  

$$R\cong\prod_{i\in h_R\subseteq \Omega_s^{con}}i^{(\lambda_i)}\prod_{i\in t_R\subseteq \Omega_{ss}}^{con}i^{(\lambda_i)}\prod_{i\in w_R\subseteq \Omega_{un}^{con}}i^{(\lambda_i)}.$$

be a connected component of $(k,l,m)_{S}.$ Put $C=h_R\cup t_R\cup w_R.$

Here $\sum_{i\in h_R}\lambda_i=l,$ $\sum_{i\in t_R}\lambda_i=k,$ $\sum_{i\in w_R}\lambda_i=m,$ and $R$ varies over all possible triples of partitions.

Denote by $F_k$ a free group with $k$ generators.

%Let $q_R\subseteq h_R$ be a set of all elements from $h_R$ homeomorphic to circles.

Then $R$ is homotopy equivalent to

$$
%\begin{array}{l}
(S^1)^{\sum_{i\in C, \lambda_i>1, 0<b_1(i)\leq \lambda_i}b_1(i)}\times\prod_{\begin{array}{l}i\in C\\ b_1(i)>\lambda_i>1\end{array}}T_{\lambda_i}^{b_1(i)}\times\prod_{\begin{array}{l}i\in C\\ \lambda_i=1\end{array}} \vee_{j=1}^{b_1(i)}S^1
%\end{array}
$$

Fundamental group of $R$ is isomorphic to the 
$$\Z^{\sum_{i\in C, \lambda_i>1}b_1(i)}\times\prod_{i\in C,\lambda_i=1}F_{b_1(i)}.$$ 
\end{theorem}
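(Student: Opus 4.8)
The plan is to reduce the statement to a single homotopy computation about bouquets of circles, via two mechanisms: the geometric splitting of Proposition \ref{unmixingconnected} and the homotopy invariance of the symmetric product. First I would invoke Proposition \ref{unmixingconnected} to write the connected component as a product
$$R\cong\prod_{i\in C}i^{(\lambda_i)},$$
where each factor $i$ is a connected component of one of the strata $\Omega_s,\Omega_{ss},\Omega_{un}$, hence a connected semialgebraic subset of the plane $\CN=\R^2$. By Lemma \ref{setgraph} each such $i$ is homotopy equivalent to a bouquet of $b_1(i)$ circles (and to a point when $b_1(i)=0$).

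The next step is to transport these homotopy equivalences across the symmetric product. I would first record that $(-)^{(n)}$ is a homotopy functor: a homotopy $H\colon X\times I\to Y$ induces a well-defined continuous $\widetilde H\colon X^{(n)}\times I\to Y^{(n)}$, $\widetilde H(\{x_1,\ldots,x_n\},t)=\{H(x_1,t),\ldots,H(x_n,t)\}$, since this map is symmetric in the $x_j$ and $\kappa_n\times\mathrm{id}_I$ is a quotient map ($I$ being compact Hausdorff). Hence $i\simeq\vee_{b_1(i)}S^1$ yields $i^{(\lambda_i)}\simeq(\vee_{b_1(i)}S^1)^{(\lambda_i)}$, while contractible factors ($b_1(i)=0$) contribute a single point and drop out of the product. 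This reduces the whole theorem to the symmetric powers of bouquets of circles.

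The heart of the argument, and the step I expect to be the main obstacle, is the identification
$$(\vee_k S^1)^{(\lambda)}\simeq T^k_{\min(\lambda,k)},$$
the $\min(\lambda,k)$-skeleton of the $k$-torus (the union of its $\min(\lambda,k)$-dimensional coordinate subtori). To pin down \emph{which} skeleton arises I would use the natural inclusion into the infinite symmetric product $(\vee_k S^1)^{(\lambda)}\hookrightarrow(\vee_k S^1)^{(\infty)}$, which by Dold--Thom is a $K(\Z^k,1)=T^k$ (the reduced homology of $\vee_k S^1$ being $\Z^k$ in degree one). A multiset of $\lambda$ points distributes among the $k$ loops and affects the $j$-th torus coordinate only through points lying on the $j$-th loop; thus at most $\min(\lambda,k)$ coordinates can be made nontrivial, and each such configuration of active coordinates is attained, so the reachable locus is exactly $T^k_{\min(\lambda,k)}$. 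That this inclusion is in fact a homotopy equivalence onto the skeleton is the computation of symmetric products of bouquets of circles carried out by Ong \cite{Ong2003} through the correspondence with general position hyperplane arrangements; I would import this as a black box, the mechanism being a cell-by-cell deformation retraction of $(\vee_k S^1)^{(\lambda)}$ onto the skeleton, higher cells being unreachable with only $\lambda$ points.

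Finally I would assemble the product and read off the invariants. Splitting $C$ according to $\lambda_i=1$, to $\lambda_i>1$ with $b_1(i)\le\lambda_i$, and to $\lambda_i>1$ with $b_1(i)>\lambda_i$ turns $T^{b_1(i)}_{\min(\lambda_i,b_1(i))}$ into, respectively, $T^{b_1(i)}_1=\vee_{b_1(i)}S^1$, the full torus $(S^1)^{b_1(i)}$, and $T^{b_1(i)}_{\lambda_i}$; gathering the full-torus factors into one torus $(S^1)^{\sum b_1(i)}$ reproduces exactly the stated homotopy type. For the fundamental group I would use $\pi_1(\prod)=\prod\pi_1$ together with $\pi_1(T^k_q)=\Z^k$ for $q\ge 2$ (the skeleton then contains the $2$-skeleton of $T^k$, which already carries all the commutation relations, and attaching cells of dimension $\ge 3$ leaves $\pi_1$ unchanged) and $\pi_1(T^k_1)=\pi_1(\vee_k S^1)=F_k$. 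This yields $\Z^{\sum_{\lambda_i>1}b_1(i)}\times\prod_{\lambda_i=1}F_{b_1(i)}$, completing the proof.
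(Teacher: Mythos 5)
Your proposal is correct and follows essentially the same route as the paper's proof: Proposition \ref{unmixingconnected} for the product decomposition, Lemma \ref{setgraph} to replace each planar component by a bouquet of $b_1(i)$ circles, Ong's Theorem 1.2 for the symmetric powers of bouquets, and the product formula for $\pi_1$. The only substantive differences are that you explicitly justify the homotopy invariance of the symmetric product functor (a step the paper uses tacitly) and that you compute $\pi_1(T^k_q)=\Z^k$ for $q\ge 2$ by a cellular argument rather than by citing Hattori; both are sound.
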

\begin{proof}
Decomposition of $(k,l,m)_{S}$ into the connected components follows from Lem\-ma \ref{unmixingconnected}. Note that by Lemma \ref{setgraph} these connected components are homotopy equivalent to the bouquets of circles (which may consist from just one circle or from zero circles \-- contractible case). 

One can use Theorem 1.2 \cite{Ong2003} to determine homotopy type of each component. 

Fundamental group of $T_k^q$ is $\Z^q$ by Theorem 3 \cite{Hat1975}(or, equivalently one can use \cite{KT2013} and Hurewicz isomorphism theorem \cite[ Ch.7 Sec.5 Theorem 5]{Spa1966}) and the fundamental group of $\vee_{k=1}^qS^1$ is $F_q.$ 

Remember that the fundamental group of a product of spaces is a product of fundamental groups \cite[ Ch.2 Ex G.1.]{Spa1966}.

Finally, note that the fundamental group of $G(h)$ is free with cyclomatic number of generators \cite[ Ch.2 Sec.7 Corollary 5]{Spa1966}.
Applying Hurewicz Isomorphism Theorem \cite[ Ch.7 Sec.5 Theorem 5]{Spa1966} we get that for closed $h\in Str(S)^{con}$ $b_1(h)=cycl(G(h)).$ 
\end{proof}

\begin{theorem}[Betti numbers of a $D$-stratum]\label{betti}
Let $S$ be a non-trivial %topologically simple 
(monic) stability theory. Suppose that each stratum of $Str(S)$ is open in its closure. % such  that $Str(S)$ is non-trivial and consists from open and closed sets. % such that $\Omega_{ss}$ is a real algebraic curve without self-intersections. 
Denote as $b_i^j$ $i$-th Betti number of $j$-th stratum of $S.$
Then $u$-th Betti number of stratum $(k,l,m)_S$ is 

$$
\begin{array}{l}
\sum_{\begin{array}{l}r+t+q=u,\\0\leq r\leq k\\0\leq q\leq l\\ 0\leq t\leq m\end{array}}{b^s_1\choose r}{b^{ss}_1\choose q}{b^{un}_1\choose t}\times\\\qquad\qquad\qquad\times{b_0^{s}+k-r-1\choose k-r}{b_0^{ss}+l-q-1\choose l-q}{b_0^{un}+m-t-1\choose m-t}
\end{array}
$$
\end{theorem}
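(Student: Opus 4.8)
The plan is to reduce the computation to the known homotopy decomposition from Theorem~\ref{hott} and then apply the Künneth formula together with a combinatorial bookkeeping of how the symmetric-product factors contribute. By Proposition~\ref{unmixing} the stratum $(k,l,m)_S$ is semialgebraically homeomorphic to $\Omega_s^{(k)}\times\Omega_{ss}^{(l)}\times\Omega_{un}^{(m)}$, so by the Künneth theorem its Poincaré polynomial is the product of the Poincaré polynomials of the three symmetric-product factors. Hence the whole problem factorises, and it suffices to compute the Betti numbers of a single symmetric power $\Omega_j^{(N)}$ in terms of the Betti numbers $b_0^j, b_1^j$ of the stratum $\Omega_j$ itself.

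First I would treat one factor, say $\Omega_s^{(k)}$. By Lemma~\ref{setgraph} each connected component of $\Omega_s$ is homotopy equivalent to a bouquet of $b_1$ circles, so $\Omega_s$ is homotopy equivalent to a disjoint union of $b_0^s$ such bouquets. Its symmetric power $\Omega_s^{(k)}$ decomposes, by Lemma~\ref{unmixingconnected}, into components indexed by the ways of distributing $k$ points among the $b_0^s$ connected components; a component where the $i$-th piece receives $\lambda_i$ points looks like a product of symmetric powers of bouquets of circles. The key input here is Ong's result (Theorem~1.2~\cite{Ong2003}) on symmetric products of bouquets of circles, which underlies Theorem~\ref{hott}: a single circle's $N$-th symmetric power $(S^1)^{(N)}\simeq S^1$ contributes one unit of $b_1$, and more generally the symmetric power of a bouquet of $b_1$ circles contributes $b_1$ independent $1$-cycles regardless of how many points are placed (for $N\geq 1$). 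Each such $1$-cycle is an exterior (odd) generator, so choosing $r$ of the total $b_1^s$ available circle-classes to be ``active'' contributes a factor $\binom{b_1^s}{r}$ and raises the cohomological degree by $r$.

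Next I would account for the connected-component count at fixed homological degree. The number of connected components of $\Omega_s^{(k)}$ that carry a prescribed subset of active $1$-cycles is governed by the number of weak compositions of the remaining mass among the $b_0^s$ connected components, which is the stars-and-bars count $\binom{b_0^s+k-r-1}{k-r}$. This is precisely the $b_0$-factor appearing in the statement, and it matches the connected-component formula promised in the second named theorem of the introduction (the case $u=0$). Combining: the degree-$r$ part of the Poincaré polynomial of $\Omega_s^{(k)}$ is $\binom{b_1^s}{r}\binom{b_0^s+k-r-1}{k-r}$. Writing the analogous expressions for the $\Omega_{ss}^{(l)}$ and $\Omega_{un}^{(m)}$ factors with degree indices $q$ and $t$, and multiplying the three Poincaré polynomials, the coefficient of the total degree $u=r+q+t$ is exactly the claimed sum.

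The main obstacle I anticipate is justifying carefully that the cohomology of each symmetric-power factor is \emph{torsion-free} and that the cross-terms from Ong's computation multiply as a genuine exterior algebra, so that Künneth introduces no $\mathrm{Tor}$ corrections and the Betti numbers multiply cleanly. Concretely, I would need to confirm that placing multiple points on a single circle or on a bouquet never creates $2$-torsion (as can happen for symmetric products of higher-dimensional spaces); here the $1$-dimensionality of the strata, via Lemma~\ref{setgraph}, is what saves us, since all factors reduce to symmetric powers of graphs whose homology is free. Once torsion-freeness is established, the remaining steps are the routine stars-and-bars count for $b_0$ and the binomial count for selecting active $1$-cycles, and the stated formula follows by collecting coefficients of $u=r+q+t$.
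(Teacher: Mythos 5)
Your proposal is correct and reaches the stated formula, but it routes the key computational step differently from the paper. Both arguments start the same way: Proposition~\ref{unmixing} identifies $(k,l,m)_S$ with $\Omega_s^{(k)}\times\Omega_{ss}^{(l)}\times\Omega_{un}^{(m)}$, and the K\"unneth formula \cite{Kun1923} reduces everything to the Betti numbers of a single factor $\Omega_j^{(N)}$. At that point the paper simply invokes Macdonald's theorem \cite{McD1962a}, which packages the Poincar\'e polynomials of all symmetric powers of a space with Betti numbers $b_0,b_1$ (and no higher ones, by Lemma~\ref{setgraph}) into the generating function $(1+xt)^{b_1}/(1-t)^{b_0}$, and then just expands; the denominator handles disconnectedness automatically. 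You instead re-derive the coefficient $\binom{b_1^j}{r}\binom{b_0^j+N-r-1}{N-r}$ by hand: decompose $\Omega_j^{(N)}$ into connected components via Lemma~\ref{unmixingconnected}, use Ong's homotopy type (torus skeleta, as in Theorem~\ref{hott}) to see that each component contributes a truncated exterior algebra on its circle classes, and then count by stars and bars the components compatible with a chosen $r$-element set of active circles. This is in effect a bijective proof of the expansion of Macdonald's generating function; it is longer but stays within the machinery already deployed for Theorem~\ref{hott} and makes the combinatorial meaning of the two binomial factors transparent. The one point you flag as an obstacle is genuinely the crux of your route but is easily discharged: the $N$-skeleton of a $b$-torus has cellular chain complex with zero differentials, so its homology is free of rank $\binom{b}{r}$ in each degree $r\le N$; and since Betti numbers are ranks (equivalently, dimensions over $\mathbb{Q}$), the K\"unneth $\mathrm{Tor}$ terms never affect the product formula in any case.
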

\begin{proof}
Proposition \ref{unmixing} guarantees that to compute Betti numbers of stratum $(k,l,m)_S$ it is enough to compute Betti numbers of symmetric products of $S$-stratum. 

In order to do that one can use I.G. Macdonald theorem \cite{McD1962a}, which gives a Poincar\'e polynomial of symmetric product and K\"unneth formula \cite{Kun1923}, which guarantees that Poincar\'e polynomial of product of spaces is product of Poincar\'e polynomials.

Namely, for each stratum of $S$ we can easily compute its Betti numbers, $b_0^j$\--- number of connected components, $b_1^j$\--- number of holes. As, according to Lemma \ref{setgraph} each stratum of $S$ is homotopy equivalent to the disjoint union of bouquets of circles (may be $0$ or $1$ circle), all higher Betti numbers are zeros.

McDonald theorem gives the generating function of a Poincar\'e polynomial:
$$
\begin{array}{l}
\frac{(1+xt)^{b_1}}{(1-t)^{b_0}}=(1+xt)^{b_1}(\sum_{w=0}^{\infty}{b_0^j+w-1\choose w}t^w)=\\=(\sum_{v=0}^{b_1^j}x^vt^v)(\sum_{w=0}^{\infty}{b_0^j+w-1\choose w}t^w)=\\=
\sum_{w=0}^{\infty}t^w(\sum_{v=0}^{w}x^v{b_1^j\choose r}{b_0^j+w-v-1\choose w-v}).
\end{array}
$$

Applying K\"unneth formula we obtain that the Poincar\'e polynomial of stratum is equal to

$$
\begin{array}{l}
(\sum_{r=0}^{k}x^r{b_1^s\choose r}{b_0^s+k-r-1\choose k-r})(\sum_{q=0}^{l}x^r{b_1^s\choose r}{b_0^s+l-q-1\choose l-q})\times\\\times(\sum_{t=0}^{m}x^r{b_1^s\choose t}{b_0^s+m-t-1\choose m-t})=\\=\sum_{u=0}^{k+l+m}x^u\sum_{r+t+q=u,0\leq r\leq k, 0\leq q\leq l, 0\leq t\leq m}({b^s_1\choose r}{b^{ss}_1\choose q}{b^{un}_1\choose t}\times\\\times{b_0^{s}+k-r-1\choose k-r}{b_0^{ss}+l-q-1\choose l-q}{b_0^{un}+m-t-1\choose m-t}).
\end{array}
$$
\end{proof}
Taking $u=0,$ we obtain the following proposition. 
\begin{proposition}\label{connectcomp}
 Let $S$ be a non-trivial %topologically simple 
 (monic) stability theory. Suppose that each stratum of $Str(S)$ is open in its closure. % such  that $Str(S)$ is non-trivial and consists from open and closed sets. Denote as $b_j$ number of connected components of $j$-th stratum of $S.$
 
 Then stratum $(k,l,m)_S$ has ${b^{s}_0+k-1\choose k}{b^{ss}_0+l-1\choose l}{b^{un}_0+m-1\choose m}$ connected components.
\end{proposition}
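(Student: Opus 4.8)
The plan is to obtain this immediately as the $u=0$ specialisation of Theorem \ref{betti}, using that the zeroth Betti number of a space equals its number of connected components. First I would substitute $u=0$ into the Betti-number formula and observe that the constraint $r+t+q=0$ together with $r,t,q\geq 0$ forces $r=t=q=0$, so the triple sum collapses to a single term. In that term the factors ${b_1^s\choose 0}$, ${b_1^{ss}\choose 0}$, ${b_1^{un}\choose 0}$ each equal $1$, and the three surviving binomials become exactly ${b_0^s+k-1\choose k}$, ${b_0^{ss}+l-1\choose l}$, ${b_0^{un}+m-1\choose m}$, which is the claimed count. Since the hypotheses of the present proposition coincide verbatim with those of Theorem \ref{betti}, this substitution is legitimate with no further work.

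Alternatively, I would give a self-contained derivation that bypasses the full homology computation. By Proposition \ref{unmixing} the stratum $(k,l,m)_S$ is semialgebraically homeomorphic to the product $\Omega_s^{(k)}\times\Omega_{ss}^{(l)}\times\Omega_{un}^{(m)}$, so its number of connected components is the product of the component counts of the three symmetric powers. It then suffices to count $\pi_0$ of a symmetric power $X^{(j)}$ when $X$ has $c$ connected components. Here I would argue that a multiset of $j$ points of $X$ is determined up to path-connectivity by how many of its points lie in each connected component of $X$: points within one component can be slid continuously and resymmetrised, while no path can carry a point across components. Hence $\pi_0(X^{(j)})$ is in bijection with the weak compositions $j=n_1+\dots+n_c$, of which there are ${c+j-1\choose j}$, and multiplying the three factors yields the formula.

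I do not expect a genuine obstacle, as the statement is a corollary of results already in hand. The only point needing mild care in the direct argument is the claim that the $\pi_0$-classes of $X^{(j)}$ are exactly the ``component-type'' vectors $(n_1,\dots,n_c)$: this uses that each connected semialgebraic set is path-connected (so points inside a component really can be joined by a path before symmetrising) and that the quotient map $\kappa_j$ is compatible with the decomposition of $X$ into its connected pieces, which is precisely what is packaged into Lemma \ref{dispower}. Given that, the cleanest writeup is simply the one-line reduction to Theorem \ref{betti} at $u=0$.
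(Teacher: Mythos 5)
Your primary argument is exactly the paper's proof: the paper derives this proposition verbatim by setting $u=0$ in the Betti-number formula of Theorem \ref{betti}, and your substitution and simplification are correct. The alternative direct count via Proposition \ref{unmixing} is a sound bonus but not needed.
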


Topology of strata for two most important cases: connected $\Omega_{ss}$ without self-intersections and pole placement case(finite $\Omega_{ss}$) could be described with higher precision. 

\begin{proposition}\label{compactconnected}
 Suppose that $\Omega_{ss}$ is homeomorphic to $S^1.$ E.g. in the case of compact convex closure of open $\Omega_s$, in the case  of Hurwitz stability theory, Schur stability theory or (quasi)hyperbolicity theory.
 
 Then strata $(k,0,m)$ are homeomorphic to $\R^{2(k+m)}$ and strata $(k,l,m), l>0$ are homeomorphic to $S^1\times D^{l-1}\times \R^{2(k+m)}$ if $l$ is odd and to $S^1\widetilde{\times}D^{l-1}\times\R^{2(k+m)}$ if $l$ is even.
 
 Here $\widetilde{\times}$ denote unique non-orientable bundle over $S_1$, $D^k$ is a closed disc.
\end{proposition}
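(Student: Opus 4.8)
The plan is to combine Proposition \ref{unmixing} with a direct topological identification of the three symmetric-product factors. In all the listed cases each stratum of $Str(S)$ is open in its closure ($\Omega_s,\Omega_{un}$ are open in $\CP^1$, while $\Omega_{ss}\cong S^1$ is closed, hence equal to its closure), so Proposition \ref{unmixing} applies and gives a semialgebraic homeomorphism
$$
(k,l,m)_S\cong\Omega_s^{(k)}\times\Omega_{ss}^{(l)}\times\Omega_{un}^{(m)}.
$$
Thus it suffices to identify each factor separately and then reassemble, reordering the Euclidean directions at the end.

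First I would treat the two planar factors. Since $\Omega_{ss}=\partial\Omega_s$ is a simple closed curve on $\CP^1\cong S^2$, the Jordan--Schoenflies theorem splits $\CP^1\setminus\Omega_{ss}$ into two open discs, which are exactly $\Omega_s$ and $\Omega_{un}$; hence both are homeomorphic to $\CN\cong\R^2$. Symmetric product is functorial for homeomorphisms (a homeomorphism $X\to Y$ induces a $\Sigma_n$-equivariant homeomorphism $X^n\to Y^n$, descending to $X^{(n)}\to Y^{(n)}$), so $\Omega_s^{(k)}\cong\CN^{(k)}$ and $\Omega_{un}^{(m)}\cong\CN^{(m)}$. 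By Proposition \ref{proj} the root--coefficient correspondence identifies $\CN^{(n)}$ with the space of monic polynomials of degree $n$, i.e. with $\CN^n\cong\R^{2n}$; thus $\Omega_s^{(k)}\cong\R^{2k}$ and $\Omega_{un}^{(m)}\cong\R^{2m}$. For $l=0$ the middle factor $\Omega_{ss}^{(0)}$ is a single point, and the claim $(k,0,m)\cong\R^{2(k+m)}$ follows at once.

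The essential remaining step, and the main obstacle, is the computation of $(S^1)^{(l)}$ together with its orientability behaviour. I would realise it as a mapping torus. Writing $S^1=\R/\Z$ and lifting an unordered $l$-tuple to a nondecreasing chain of real representatives $x_1\le x_2\le\cdots\le x_l\le x_1+1$, a configuration is encoded by the base point $x_1\bmod 1\in S^1$ together with the cyclic gaps $d_i=x_{i+1}-x_i$ (indices mod $l$, with $x_{l+1}=x_1+1$), which satisfy $d_i\ge 0$ and $\sum_i d_i=1$ and hence form the simplex $\Delta^{l-1}$. Pushing the base point once around $S^1$ forces the smallest representative to pass an adjacent one, cyclically relabelling the gaps; therefore $(S^1)^{(l)}$ is the mapping torus of the cyclic shift $\sigma\colon\Delta^{l-1}\to\Delta^{l-1}$, $(d_1,\ldots,d_l)\mapsto(d_2,\ldots,d_l,d_1)$. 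This exhibits $(S^1)^{(l)}$ as a $D^{l-1}$-bundle over $S^1$ (the fibre $\Delta^{l-1}$ being a closed $(l-1)$-disc), recovering the classical theorem of Morton.

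Finally I would settle orientability from the monodromy $\sigma$. As a linear map $\sigma$ is the permutation matrix of an $l$-cycle, of sign $(-1)^{l-1}$; on the affine hyperplane $\{\sum_i d_i=1\}$ its determinant equals this sign, since the complementary invariant line $\R(1,\ldots,1)$ contributes $+1$. Hence $\sigma$ is orientation-preserving on the fibre exactly when $l$ is odd and orientation-reversing when $l$ is even. Because an orientation-preserving self-homeomorphism of a disc is isotopic to the identity (Alexander's trick), for odd $l$ the bundle is trivial, $(S^1)^{(l)}\cong S^1\times D^{l-1}$, while for even $l$ it is the unique non-orientable bundle $S^1\widetilde{\times}D^{l-1}$. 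Assembling with the planar factors and merging the Euclidean directions via $\R^{2k}\times\R^{2m}=\R^{2(k+m)}$ yields $S^1\times D^{l-1}\times\R^{2(k+m)}$ for odd $l$ and $S^1\widetilde{\times}D^{l-1}\times\R^{2(k+m)}$ for even $l$, as claimed. The only genuinely delicate points are the mapping-torus identification of $(S^1)^{(l)}$ and the sign computation of the monodromy; everything else is bookkeeping with Propositions \ref{unmixing} and \ref{proj}.
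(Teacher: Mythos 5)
Your argument is correct and follows the paper's route: the paper's own proof is a one-line appeal to Proposition \ref{unmixing} together with Morton's theorem on symmetric products of the circle \cite{Mor1967}, which is exactly your decomposition $(k,l,m)_S\cong\Omega_s^{(k)}\times\Omega_{ss}^{(l)}\times\Omega_{un}^{(m)}$ followed by the identification of the three factors. The only difference is that you additionally supply a self-contained derivation of Morton's theorem (the mapping-torus and monodromy-sign argument) and of $\Omega_s^{(k)}\cong\R^{2k}$, both of which the paper leaves to citations; these extra details are sound, up to the cosmetic point that triviality of the bundle for odd $l$ is most cleanly obtained from the monodromy being a linear rotation isotopic to the identity, rather than from Alexander's trick in its usual boundary-fixing form.
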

\begin{proof}
This follows immediately from the Proposition \ref{unmixing} and Morton's theorem on symmetric product of a circle \cite{Mor1967}.
\end{proof}

For the description of geometry of pole placement problem one also need some definitions from the theory of subspace arrangements, which could be found in \cite{OT1992}.

\begin{proposition}\label{poleplacement}
Let $S$ be a stability theory with finite $\Omega_s,$ $|\Omega_s=r.|$
Consider a $D$-stratification of $S^{(n)}.$

\begin{enumerate}
 \item $\cup_{i=1}^{q}(i,n-i)_S$ is a general position arrangement of $r$ complex projective hyperplanes.% $H=\{h_1,\ldots,h_r\};$
 \item $\bigcup_{q\leq s}(s,n-s)_S$ is %a highly regular 
 arrangement of ${r+q-1\choose q}$ $(n-s)$-dimensional complex projective subspaces; 
 
 Intersection poset of that arrangement is an intersection poset for the set of all multisubsets of $\{1,\ldots,r\}$ of cardinality no less then $q$ and no greater then $n.$
 Codimension of subspace intersection equals cardinality of corresponding multisubset.
 
\end{enumerate}
\end{proposition}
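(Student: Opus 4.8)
The plan is to transport the whole question onto the space of binary forms. By Proposition \ref{proj} the symmetric product $S^{(n)}=(\CP^1)^{(n)}$ is identified with $\CP^n$, the projectivised space of degree-$n$ binary forms $f(x,y)$, a point being the multiset of its $n$ linear factors. Since $\Omega_s=\{p_1,\ldots,p_r\}$ is finite it is closed, so $\Omega_{ss}=\overline{\Omega_s}\setminus\Omega_s=\varnothing$ and every $D$-stratum has the shape $(i,0,n-i)_S$, abbreviated $(i,n-i)_S$. To each stable point $p_j=[\alpha_j:\beta_j]$ I attach the linear form $\ell_j=\beta_j x-\alpha_j y$ vanishing exactly at $p_j$, and for a multiset $M=(m_1,\ldots,m_r)$ on $\{1,\ldots,r\}$ I set $\ell^M=\prod_j\ell_j^{m_j}$ and $L_M=\{[f]\in\CP^n:\ell^M\mid f\}$.

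First I would set up the dictionary: a form $f$ carries at least the roots prescribed by $M$ (with multiplicity) in $\Omega_s$ iff $\ell^M\mid f$, so that $\bigcup_{s\geq q}(s,n-s)_S$, the locus of forms with at least $q$ roots in $\Omega_s$, equals $\bigcup_{|M|=q}L_M$. The crucial algebraic input is that distinct points yield pairwise coprime linear forms, whence, the ring of binary forms being a UFD, for $|M|=q\leq n$ the forms divisible by $\ell^M$ constitute exactly $\ell^M\cdot\{g:\deg g=n-q\}$, so that $L_M$ is a projective subspace of dimension $n-q$, i.e. of codimension exactly $q$. Counting the size-$q$ multisets of an $r$-element set gives the stated $\binom{r+q-1}{q}$ subspaces. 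For $q=1$ this already settles the first assertion: the $L_{\{j\}}=H_j$ are $r$ hyperplanes, and they lie in general position because any $k\leq n$ of them meet in $L_{\{j_1,\ldots,j_k\}}\cong\CP^{n-k}$, while any $n+1$ of them would force divisibility by a form of degree $n+1$ and hence meet only in the excluded zero form; this is the general-position correspondence noted by Ong, the Vandermonde phenomenon of points on the rational normal curve.

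For the intersection poset I would prove $L_M\cap L_{M'}=L_{M\vee M'}$, where $M\vee M'$ is the coefficientwise maximum, equivalently the least common multiple of the divisors $\ell^M,\ell^{M'}$; this is again immediate from coprimality of the $\ell_j$. Consequently the flats of the arrangement $\bigcup_{|M|=q}L_M$ are precisely the subspaces $L_N$ with $N$ running over multisets obtainable as joins of size-$q$ multisets, ordered by divisibility, and I read off that $N$ ranges over all multisubsets of $\{1,\ldots,r\}$ of cardinality between $q$ and $n$, with $\operatorname{codim}L_N=|N|$; multisets of cardinality exceeding $n$ contribute only the empty subspace and drop out. This is exactly the claimed identification of the intersection poset.

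The genuinely delicate points here are bookkeeping rather than depth. One must keep in mind that the $D$-strata count roots \emph{with multiplicity}, so the natural index set is multisets rather than subsets, and one must confine the exact-codimension statements to the range $|N|\leq n$, treating $|N|>n$ as the empty intersection. The single substantive ingredient is the coprimality of the $\ell_j$ at distinct points, which simultaneously delivers the exact codimensions $\dim L_M=n-|M|$, the general position of the $H_j$, and the join formula for intersections; the remainder is the combinatorics of multisets and the corresponding dimension count.
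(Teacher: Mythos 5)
Your argument is essentially the paper's own proof written out in more detail: both identify $S^{(n)}$ with $\CP^n$ via Proposition \ref{proj}, translate ``carries at least the roots prescribed by $M$'' into divisibility by $\ell^M$ (a linear, codimension-$|M|$ condition on coefficients), and read the intersection poset off from joins (unions) of multisets, so your coprimality/UFD bookkeeping and the explicit general-position check are welcome elaborations of steps the paper leaves implicit. One caution on your final ``read off'' (a slip the paper's own statement shares): a join of size-$q$ multisets necessarily has every multiplicity at most $q$, so the flats are exactly the $L_N$ with $q\leq|N|\leq n$ \emph{and} $\max_j N(j)\leq q$, not literally all multisubsets of cardinality in $[q,n]$.
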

\begin{proof}
By Proposition \ref{proj}, union of strata  $\cup_{i=1}^{q}(i,n-i)_S$ is a set of all homogeneous binary forms having at least $1$ root at some points of $\Omega_s.$ Homogeneous binary form with roots at some specified points of $\CP^1$ defines a hyperplane in $\CP^{n},$ as that condition is linear on coefficients. As $k$-wise intersections are subspaces containing at least $k$-roots in some $k$ different points of $\Omega_s$ we get the first claim, which is the projectivised version of Theorem 1.1 \cite{Ong2003}.

Proof of claim 2, is analogous to the previous one. One should only note that the subspace of homogeneous binary forms having roots at each point of a fixed $q$-multisubset of $\Omega_s$ is a complex projective subspace of complex codimension $q$ and the number of distinct subspaces is equal to the number of $q$-multisubsets of $\Omega_s.$ As intersections between those subspaces are given by unions of multisubsets we obtain an intersection poset structure. 
\end{proof}

Now it is easy to describe topology of stratum belonging to the stratification $\underline{S^{(n)}}$ for some stability theory $S.$
\begin{proposition}\label{birthdeath1}
 Let $S$ be a stability theory.
 
 Then stratum $(k,l,m), k+l+m=r$ of $\underline{S^{(n)}}$ is semialgebraically homeomorphic to the stratum $(k,l,m)$ of monic stability theory $(S\setminus{\infty})^{(r)}.$
\end{proposition}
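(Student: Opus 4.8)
The plan is to realize the claimed homeomorphism as the iterated filtration embedding that ``freezes'' $n-r$ roots at the marked point $\infty$, and then to read off that its image is exactly the prescribed $\underline{S^{(n)}}$-stratum.

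First I would make the stratum of $\underline{S^{(n)}}$ explicit. Since every symmetric power of $\CP^1$ is a real algebraic variety (Proposition \ref{proj}), Proposition \ref{isolocal} applies and gives $\underline{S^{(n)}}=T^{(n)}$, where $T$ is the refinement of $Str(S)$ that splits the marked point off its stratum, namely $T=\{\Omega_s\setminus\{\infty\},\,\Omega_{ss}\setminus\{\infty\},\,\Omega_{un}\setminus\{\infty\},\,\{\infty\}\}$. Consequently the stratum labelled $(k,l,m)$ of $\underline{S^{(n)}}$, with $k+l+m=r\leq n$, consists of those size-$n$ multisets in $\CP^1$ carrying exactly $k$ points in $\Omega_s\setminus\{\infty\}$, $l$ in $\Omega_{ss}\setminus\{\infty\}$, $m$ in $\Omega_{un}\setminus\{\infty\}$, and therefore multiplicity exactly $n-r$ at $\infty$. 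Here I would record the one compatibility that needs checking: because $\CN=\CP^1\setminus\{\infty\}$ is open, the euclidean closure of $\Omega$ taken in $\CP^1$ meets $\CN$ in the euclidean closure of $\Omega\cap\CN$ taken in $\CN$, so the three strata of $T$ lying in $\CN$ coincide stratum-by-stratum with the three strata $\Omega_s,\Omega_{ss},\Omega_{un}$ of the monic theory $S\setminus\{\infty\}$. In particular the $(k,l,m)$-stratum of $(S\setminus\{\infty\})^{(r)}$ is exactly the set $A$ of size-$r$ multisets in $\CN$ with distribution $(k,l,m)$.

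Next I would define the map. Let $\rho_i\colon (\CP^1)^{(i)}\to(\CP^1)^{(i+1)}$ be the filtration embedding of Proposition \ref{stratif}, which adjoins one copy of the marked point $\infty$, and set $\Phi=\rho_{n-1}\circ\cdots\circ\rho_r$, so that $\Phi$ adjoins $n-r$ copies of $\infty$. Viewing $A\subseteq\CN^{(r)}\subseteq(\CP^1)^{(r)}$, I would restrict $\Phi$ to $A$. The assignment $M'\mapsto M'\cup\{\infty,\ldots,\infty\}$ (with $n-r$ copies adjoined) is visibly injective, and its inverse on the image is ``delete the $n-r$ copies of $\infty$'', which is well defined precisely because on the target stratum the multiplicity at $\infty$ is pinned to $n-r$. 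The crux is then the set-theoretic identity $\Phi(A)=B$, where $B$ is the $(k,l,m)$-stratum of $\underline{S^{(n)}}$: the inclusion $\Phi(A)\subseteq B$ holds because adjoining copies of $\infty$ does not disturb the points already lying in $\CN$ and produces multiplicity $n-r$ at $\infty$; the reverse inclusion holds because any $M\in B$ has exactly $n-r$ points at $\infty$, and deleting them returns an element of $A$ whose $\Phi$-image is $M$.

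Finally I would upgrade the bijection to a semialgebraic homeomorphism. Each $\rho_i$ is a closed embedding of filtered real algebraic varieties by Proposition \ref{proj}, hence semialgebraic and a homeomorphism onto its closed image; the composition $\Phi$ inherits both properties. Restricting a semialgebraic homeomorphism to the semialgebraic subset $A$ yields a semialgebraic homeomorphism onto $\Phi(A)$, exactly as restrictions are handled in Lemma \ref{dispower} via the Delfs--Knebusch restriction and surjectivity results \cite{DK1981}. Combined with $\Phi(A)=B$ from the previous step, this gives the asserted semialgebraic homeomorphism between the $(k,l,m)$-stratum of $(S\setminus\{\infty\})^{(r)}$ and the $(k,l,m)$-stratum of $\underline{S^{(n)}}$. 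I expect the only genuinely delicate point to be the identification $\Phi(A)=B$ together with the closure-compatibility between $\CP^1$ and $\CN$; once the multiplicity at $\infty$ is seen to be rigidly equal to $n-r$ on $B$, everything else is formal.
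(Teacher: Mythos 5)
Your proof is correct and follows the same skeleton as the paper's: both begin by invoking Proposition \ref{isolocal} to identify the $(k,l,m)$-stratum of $\underline{S^{(n)}}$ with the stratum of multiplicity index $(k,l,m,n-r)$ of $T^{(n)}$, where $T$ splits $\{\infty\}$ off as its own closed stratum. The only divergence is in how the homeomorphism is certified. The paper applies Lemma \ref{dispower} to the two-piece decomposition $\CP^1=\CN\sqcup\{\infty\}$ (one open piece, one closed piece), obtaining a semialgebraic homeomorphism $\CN^{(r)}\times\{\infty\}^{(n-r)}\to(\CP^1)^{(n)}$ onto its image; since $\{\infty\}^{(n-r)}$ is a single point, this is exactly your map $\Phi$. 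You instead factor $\Phi$ as the composition $\rho_{n-1}\circ\cdots\circ\rho_r$ of the filtration embeddings and use that each is a closed embedding (Proposition \ref{proj}); this more elementary justification is available here precisely because one factor of the decomposition is a point, so it sidesteps the group-quotient and separation bookkeeping inside Lemma \ref{dispower}, at the cost of not generalising to decompositions with more than one non-trivial piece. Your two supplementary checks --- that the closure of $\Omega$ taken in $\CP^1$ restricts on the open subset $\CN$ to the closure taken in $\CN$, so the strata of $T$ match those of the monic theory, and the set-theoretic identity $\Phi(A)=B$ pinned down by the multiplicity $n-r$ at $\infty$ --- are genuine details that the paper's two-line proof leaves implicit, and both are argued correctly.
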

\begin{proof}
 By Proposition \ref{isolocal} stra\-tum $(k,l,m)$ is equal to the stratum with mul\-ti\-pli\-ci\-ty index $(k,l,m,n-r)$ of stra\-ti\-fi\-ca\-tion $(\{s\setminus \{e\}|s\in Str(S)\}\cup\{\{e\}\})^{(n)}.$
 
 As points are clo\-sed in $S,$ we can use Lem\-ma \ref{dispower}. Hen\-ce we ha\-ve $\underline{S^{(n)}}\cong\cup_{i=0}^nS^{(n-i)}$, where $S^{(n-i)}$ is a union of strata $(k,l,m,i)$ for all $k,l,m\in\N, k+l+m=r.$
\end{proof}

This is by no way a complete description of topology of $D^n_S$ strata. Using results by R.J. Milgram \cite{Mil1969} one can try to compute cohomology ring of stratum. Theorems of A. Hattori \cite{Hat1975} opens a possibility of computing higher homotopy groups of a stratum. 

Moreover, even more important set of open questions is the description of the structure of borders and higher-dimensional corners of the $D^n_S$-stratum. Careful study of refined stratifications $\widehat{S}$ and $\widehat{\underline{S}}$ is important here.
\section{Geometry of adjacency}
In order to study geometry of adjacencies between strata one can proceed from the topological structure to the graph-theoretic one. As that transfer is functorial it is possible to transfer symmetric product along the way. 

Examination of that construction and its consequences is the content of that section.
\begin{definition}\label{adjdig}
Let $T$ be a stratified real algebraic variety with finite stratification $L=\{L_1,\ldots,L_l\}.$ 

Define {\it ad\-ja\-ce\-ncy digraph} $Adj((T,L))$ of $(T,L)$ as a digraph with $V(G_L^T)=\{1,\ldots,l\}$ as set of vertices. 

$i,j\in V(Adj((T,L)))$ are con\-nec\-ted by an ed\-ge $(i,j)\in E(Adj((T,L)))$ if and only if $\overline{L_i}\cap L_j$ is non-empty in euclidean topology. 

In that case we will write that $L_i$ is adjacent to $L_j.$ 
\end{definition}
\begin{definition}
{\it Filtered digraph} $G\colon G_0\overset{\gamma_0}{\to} G_1\overset{\gamma_1}{\to}\ldots$ is a sequence of embeddings of digraphs. 

Let $G$ be a digraph with marked vertice $e.$

Sequence of morphisms  
$$G\overset{\gamma_1}{\to}G^2\to\ldots,\quad\gamma_i\colon (v_1,\ldots,v_i)\mapsto (v_1,\ldots v_i,e)$$ is a  filtered digraph $(G,e)^{\infty}$ \-- {\it infinite product} of $(G,e).$

{\it Infinite symmetric product} of digraph $G$ with marked point $e$ is a filtered digraph $G^{(\infty)}$ given as sequence of quotients defined by filtered action by permutations of filtered group $\Sigma^{\infty}=\Sigma_1\subset\Sigma_2\subset\ldots$ on infinite product of $G.$ 
\end{definition}

\begin{figure}[tbp]
\centering
\subcaptionbox{Schur and Hurwitz stabilities.}{$\xymatrix{\Omega_s\ar[r]&\Omega_{ss}&\Omega_{un}\ar[l]}$}\qquad
\subcaptionbox{Pole placement}{$\xymatrix{\Omega_s&\Omega_{un}\ar[l]}$}\\
\subcaptionbox{Aperiodicity}{$\xymatrix{\Omega_s\ar[rr]&&\Omega_{ss}\\&\Omega_{un}\ar[ur]\ar[ul]&}$}\\
\caption{Adjacency digraphs for different monic stability theories. Loops are not shown.}
\end{figure}
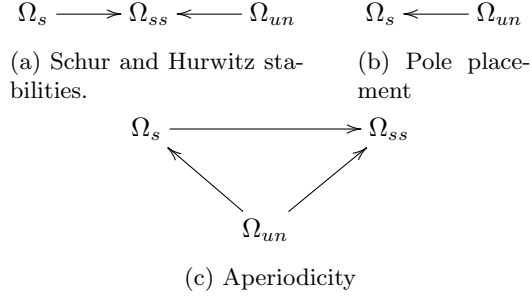 

\begin{lemma}\label{funct}
Adjacency graph is a functor from category of (filtered) stratified real algebraic varieties to the category of (filtered) digraphs.
\end{lemma}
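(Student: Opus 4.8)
The plan is to specify the action of $Adj$ on morphisms and then verify the two functor axioms, settling the non-filtered case first and deducing the filtered case levelwise. Given a morphism $\varphi\colon (L,S)\to (T,Q)$ of stratified real algebraic varieties, the defining property of such a morphism provides, for each stratum $s\in S$, a stratum of $Q$ containing $\varphi(s)$. Since the strata of $Q$ are pairwise disjoint and $\varphi(s)\neq\varnothing$, this containing stratum is unique; I denote it $\varphi_*(s)$. This yields a well-defined map $\varphi_*\colon S\to Q$ on the vertex sets of the two adjacency digraphs. The map $\varphi_*$ need not be injective, but this is harmless: every vertex of an adjacency digraph carries a loop, since $\overline{L_i}\cap L_i\supseteq L_i\neq\varnothing$, so collapsing several strata to a single vertex still respects edges.

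Next I would check that $\varphi_*$ carries edges to edges, which is the crux of the argument. A morphism of real algebraic varieties (respectively of semialgebraic spaces) is continuous in the euclidean topology, hence $\varphi(\overline{s})\subseteq\overline{\varphi(s)}$ for every stratum $s$. Suppose $(s,t)\in E(Adj((L,S)))$, i.e.\ $\overline{s}\cap t\neq\varnothing$, and choose $x\in\overline{s}\cap t$. Then, using $\varphi(s)\subseteq\varphi_*(s)$,
$$\varphi(x)\in\varphi(\overline{s})\subseteq\overline{\varphi(s)}\subseteq\overline{\varphi_*(s)},$$
while simultaneously $\varphi(x)\in\varphi(t)\subseteq\varphi_*(t)$. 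Therefore $\varphi(x)\in\overline{\varphi_*(s)}\cap\varphi_*(t)$, so this set is non-empty and $(\varphi_*(s),\varphi_*(t))\in E(Adj((T,Q)))$. Hence $Adj(\varphi):=\varphi_*$ is a morphism of digraphs.

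I would then verify the functor axioms. The identity morphism fixes every stratum, so $(\mathrm{id})_*=\mathrm{id}$ and $Adj(\mathrm{id})=\mathrm{id}$. For a composition $(L,S)\xrightarrow{\varphi}(T,Q)\xrightarrow{\psi}(U,P)$, set $q=\varphi_*(s)$ and $p=\psi_*(q)$; then $\psi(\varphi(s))\subseteq\psi(q)\subseteq p$, so by uniqueness of the containing stratum $(\psi\circ\varphi)_*(s)=p=\psi_*(\varphi_*(s))$ for every $s$. Thus $Adj(\psi\circ\varphi)=Adj(\psi)\circ Adj(\varphi)$, which establishes functoriality in the non-filtered case.

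Finally, for the filtered case I would apply the above levelwise. A filtered stratified variety is a diagram $(L_0,S_0)\to(L_1,S_1)\to\cdots$ whose closed embeddings $\lambda_i$ are themselves morphisms of stratified varieties (by the filtered-variety condition $\lambda_i(s)\subset\widetilde{s}\in S_{i+1}$), so applying $Adj$ produces a filtered digraph with filtration morphisms $Adj(\lambda_i)$. A morphism of filtered stratified varieties is a family $\{\varphi_i\}$ commuting with the filtration embeddings; since $Adj$ is a functor at each level it preserves this commutation, so $\{Adj(\varphi_i)\}$ commutes with the $Adj(\lambda_i)$ and $Adj(\tau_i)$ and is a morphism of filtered digraphs. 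The \emph{main obstacle} is the edge-preservation step, which rests on euclidean continuity of variety (and semialgebraic) morphisms together with the uniqueness of the stratum of $Q$ containing $\varphi(s)$; once these two facts are secured, the functor axioms follow formally.
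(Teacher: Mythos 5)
Your proposal is correct and follows essentially the same route as the paper: both define the vertex map by sending a stratum to the unique stratum containing its image, and both establish edge preservation via the continuity inclusion $\varphi(\overline{s})\subseteq\overline{\varphi(s)}\subseteq\overline{\varphi_*(s)}$, then handle the filtered case levelwise. Your writeup is in fact somewhat more complete, since you explicitly verify the identity and composition axioms, which the paper leaves implicit.
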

\begin{proof}
Take a filtered stratified real algebraic variety $$(R,S)\colon (R_0,S_0)\overset{\lambda_0}{\to}(R_1,S_1)\overset{\lambda_1}{\to}(R_2,S_2)\to\ldots$$

Using definition of filtered stratified real algebraic variety we obtain an tautological embedding on sets of vertices of adjacency digraphs $Adj(\lambda_i)\colon S_i\to S_{i+1}.$

Moreover, as  for each $s\in S_i$ $\lambda_i(\overline{s})\subseteq\overline{\lambda_i(s)}\subseteq\overline{Adj(\lambda_i(s))},$ if $(s,t)$ is an edge in $Adj((R_i,S_i))$ then $(Adj(\lambda_i)(s),Adj(\lambda_i)(t))$ is an edge in $Adj((R_{i+1},S_{i+1})).$

Take a morphism of filtered stratified real algebraic varieties $\zeta\colon (R,S)\to (T,Q):$
$$
\xymatrix{
(R_0,S_0)\ar[r]^{\lambda_0}\ar[d]^{\zeta_0}&(R_1,S_1)\ar[r]^{\lambda_1}\ar[d]^{\zeta_1}&(R_2,S_2)\ar[d]^{\zeta_2}\ar[r]^{\lambda_2}&\ldots\\
(T_0,Q_0)\ar[r]^{\zeta_0}&(T_1,Q_1)\ar[r]^{\zeta_1}&(T_2,Q_2)\ar[r]^{\zeta_2}&\ldots
}
$$

Any $\zeta_i$ induces morphisms on set of vertices of adjacency digraphs. The same arguments as used for the morphisms defining filtrations shows that $Adj(\zeta_i)$ is a morphism of digraphs.

Finally, note that morphism of digraphs is completely defined by map on it's sets of vertices. This finishes the proof.
\end{proof}
\begin{figure}[H]
\centering
\subcaptionbox{Schur and Hurwitz stabilities.}{
$\xymatrix{\{2\Omega_s\}\ar[d]\ar[r]&\{2\Omega_{ss}\}&\{2\Omega_{un}\}\ar[d]\ar[l]\\
\{\Omega_s,\Omega_{ss}\}\ar[ur]&\{\Omega_{un},\Omega_{s}\}\ar[u]\ar[r]\ar[l]&\{\Omega_{ss},\Omega_{un}\}\ar[ul]}$}\\
\subcaptionbox{Pole placement}{$\xymatrix{\{2\Omega_s\}&&\{2\Omega_{un}\}\ar[dl]\ar[ll]\\&\{\Omega_s,\Omega_{un}\}\ar[ul]&}$}\\
\subcaptionbox{Aperiodicity}{$\xymatrix{&\{2\Omega_{un}\}\ar[ddr]\ar[ddd]\ar[ddl]\ar[dl]\ar[dr]&\\\{2\Omega_s\}\ar[d]\ar[rr]&&\{2\Omega_{ss}\}
\\\{\Omega_s,\Omega_{ss}\}\ar[urr]&&\{\Omega_{ss},\Omega_{un}\}\ar[u]\ar[ll]\\&\{\Omega_s,\Omega_{un}\}\ar[uul]\ar[uur]\ar[ul]\ar[ur]&}$}
\caption{Adjacency digraphs for the space of monic quadratic polynomials relative to the different monic stability theories. Loops are not shown.}
\end{figure} 

It should be noted that given definition of a symmetric product of the digraph differs from one considered for non-oriented graphs in \cite{AGRR2007}. In the latter paper authors consider only ``non-singular'' part of a symmetric product, that corresponds to vertices with non-repeating components.

\begin{proposition}\label{spoweradj}
Let $T$ be a stratified real algebraic variety with finite stratification $L$ and marked point $e\in F\in L.$

Take some $n\in\N\cup\{\infty\}$ such that $T^{(n)}$ is real algebraic variety for each $m\leq n$ .

Then there exist an isomorphism of filtered digraphs  $$\tau_n\colon Adj((T,L))^{(n)}\to Adj(T^{(n)},L^{(n)}).$$
\end{proposition}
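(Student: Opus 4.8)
The plan is to exhibit $\tau_n$ as the tautological identification on vertices and then to show it matches the two edge relations. By Proposition \ref{stratif} the strata of $L^{(n)}$ are indexed by the size-$n$ multisubsets of $L$: a point $x\in T^{(n)}$ lies in the stratum $P_\mu$ associated to a multiset $\mu=\{\!\{i_1,\ldots,i_n\}\!\}$ of strata of $L$ precisely when, counting each of the $n$ points of $\kappa_n^{-1}(x)$ by the $L$-stratum containing it, one recovers $\mu$. On the other side, by construction the vertices of $Adj((T,L))^{(n)}$ are the $\Sigma_n$-orbits of $V(Adj((T,L)))^n=L^n$, i.e. again the size-$n$ multisubsets of $L$. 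I define $\tau_n$ to be this common identification; since both filtrations are given by adjoining the marked stratum $F\ni e$ (via $\rho_i$ on one side and $\gamma_i$ on the other), $\tau_n$ commutes with the filtration morphisms automatically, so it remains only to prove that $\tau_n$ is a bijection on edges.

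First I would compute the edges of the digraph symmetric product. An arrow in the product digraph $Adj((T,L))^n$ is an arrow coordinatewise, so an orbit $[\mu]\to[\nu]$ is an edge of $Adj((T,L))^{(n)}$ if and only if some orderings $(i_1,\ldots,i_n)$ of $\mu$ and $(c_1,\ldots,c_n)$ of $\nu$ satisfy $(i_j,c_j)\in E(Adj((T,L)))$ for every $j$. Equivalently, there is a bijective matching pairing each element of $\mu$ with an element of $\nu$ so that each matched pair $(a,b)$ is an edge of $Adj((T,L))$, that is $\overline{L_a}\cap L_b\neq\varnothing$ by Definition \ref{adjdig}.

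Next I would translate geometric adjacency of the strata $P_\mu,P_\nu$ into the same matching condition. Since $\kappa_n$ is a quotient by a finite group it is closed, whence $\overline{P_\mu}=\kappa_n(\overline{\kappa_n^{-1}(P_\mu)})$; moreover $\kappa_n^{-1}(P_\mu)=\bigcup L_{i_1}\times\cdots\times L_{i_n}$ over all orderings of $\mu$, and closure commutes with finite products, so $\overline{\kappa_n^{-1}(P_\mu)}=\bigcup \overline{L_{i_1}}\times\cdots\times\overline{L_{i_n}}$. Given $y\in\overline{P_\mu}\cap P_\nu$, I lift $y$ to $(y_1,\ldots,y_n)$ with $y_j\in L_{c_j}$ and $\{\!\{c_1,\ldots,c_n\}\!\}=\nu$; then some permutation $\sigma$ of this lift lies in $\overline{L_{i_1}}\times\cdots\times\overline{L_{i_n}}$ for an ordering of $\mu$, forcing $\overline{L_{i_j}}\cap L_{c_{\sigma(j)}}\neq\varnothing$ for each $j$, which is exactly a matching of $\mu$ and $\nu$ by edges. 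Conversely, given such a matching I pick for each pair $(a,b)$ a point in $\overline{L_a}\cap L_b$ together with a semialgebraic path approaching it from within $L_a$ (the strata are semialgebraic, so such paths exist); assembling these coordinates and applying $\kappa_n$ produces a point of $P_\nu$ that is a limit of points of $P_\mu$, hence lies in $\overline{P_\mu}\cap P_\nu$.

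Combining the two previous paragraphs, $[\mu]\to[\nu]$ is an edge of $Adj((T,L))^{(n)}$ exactly when $\overline{P_\mu}\cap P_\nu\neq\varnothing$, i.e. when it is an edge of $Adj(T^{(n)},L^{(n)})$, so $\tau_n$ is an isomorphism of digraphs, and naturality in the filtration gives the isomorphism of filtered digraphs. I expect the main obstacle to be the closure computation: rigorously passing between adjacency of symmetric-product strata and the coordinatewise matching description requires the closedness of the finite quotient $\kappa_n$ together with the fact that the closure of a product stratum is the product of the closures, and then reading off the matching correctly after lifting and permuting. The case $n=\infty$ then follows by passing to the colimit, using that each $\tau_n$ respects $\rho_i$ and $\gamma_i$.
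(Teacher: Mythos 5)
Your proposal is correct and follows essentially the same route as the paper: identify vertices of both digraphs with size-$n$ multisets of strata, reduce adjacency in the product to coordinatewise adjacency via $\overline{\prod Q_i}\cap\prod M_i=\prod(\overline{Q_i}\cap M_i)$, and use that $\kappa_n$ is a closed quotient by a finite group action to transfer the edge relation to the symmetric product, finishing with functoriality (Lemma \ref{funct}) for the filtered structure. Your write-up just spells out the lifting/permuting argument that the paper leaves implicit.
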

\begin{proof}
Let us fix some $1<m\leq n.$ Prove that $Adj((T,L)^m)$ isomorphic to $(Adj((T,L)))^m.$ Namely, consider $Q=(Q_1,\ldots,Q_m), M=(M_1,\ldots M_m)\in L^m.$ Then $$\overline{\prod_{i=1}^mQ_i}\cap(\prod_{i=1}^mM_i)=(\prod_{i=1}^m\overline{Q_i}\cap M_i).$$

Consider symmetric product morphism $\kappa_i\colon (T,L)^m\to (T,L)^{(m)}.$ By Lem\-ma \ref{funct} there is a digraph morphism $Adj(\kappa_m)\colon Adj((T,L))^m\to Adj((T,L)^{(m)}).$

It is obvious that $V(Adj((T,L)^{(m)}))=V(Adj((T,L))^{(m)})$ and that $$E((Adj((T,L)))^{(m)})\subseteq E(Adj((T,L)^{(m)})).$$ $\kappa_m$ is quotient map for an action of finite group and hence closed. Therefore we get $E((Adj((T,L)))^{(m)})\subseteq E(Adj((T,L)^{(m)})).$

Finally Lemma \ref{funct} transforms our isomorphism into an isomorphism of filtered digraphs.
\end{proof}

Examples for adjacency digraphs of stability theories and for stratified spaces of polynomial coefficients could be seen on Fig.3 and 4, respectively.

One can also easily solve the problem of determining the presence of an edge between any two vertices of the symmetric product's adjacency digraph.
\begin{theorem}[Criterium of adjacency between strata]\label{adjcrit}
 Let $(R,S)$ be a stratified real algebraic variety with finite stratification $S=\{s_1,\ldots,s_k\}$ Suppose that for each $m\leq n$ $R^{(m)}$ is a real algebraic variety.
 Let $\tau=(t_1,\ldots,t_k)$ and $\eta=(q_1,\ldots, q_k),$ $\sum_{i=1}^kt_i=\sum_{i=1}^kq_i=m$ be strata of $(R,S)^{(m)}.$
 %Denote by $I\subset\{1,$
 
 Then $(\tau,\eta)\in E(Adj((R,S))^{(m)})$ iff there exists such a family of natural numbers $\{\mu_t|t\in E(Adj(R,S))\}$ that the following system of linear equations has a solution:

$$
\begin{array}{l}
\sum_{j\colon (j,i)\in E(Adj(R,S))}\mu_{(j,i)}=e_i,\\
\sum_{j\colon (i,j)\in E(Adj(R,S))}\mu_{(i,j)}=t_i,\qquad i\in\{1,2,3\}.
\end{array}
$$
\end{theorem}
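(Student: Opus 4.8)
The plan is to reduce the adjacency criterion for the symmetric product to a purely combinatorial counting statement about how strata of $(R,S)^{(m)}$ sit inside the product $(R,S)^m$ via the quotient morphism $\kappa_m$. By Proposition \ref{spoweradj} we already know that $Adj((R,S))^{(m)}$ is isomorphic to $Adj((R,S)^{(m)})$, so it suffices to determine when $(\tau,\eta)$ is an edge in the symmetric-product digraph $Adj((R,S))^{(m)}$. First I would unwind the definition: an edge $(\tau,\eta)$ records that the closure of the stratum $\tau$ meets the stratum $\eta$. Lifting to the ordered product, $\tau$ and $\eta$ correspond to $\Sigma_m$-orbits of tuples in $S^m$, where $\tau=(t_1,\ldots,t_k)$ prescribes that exactly $t_i$ coordinates lie in $s_i$, and similarly $\eta$ prescribes $q_i$ (the statement writes these target multiplicities as $e_i$). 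Using the factorization of closures in a product, $\overline{\prod_i Q_i}\cap\prod_i M_i=\prod_i(\overline{Q_i}\cap M_i)$ from the proof of Proposition \ref{spoweradj}, adjacency at the level of ordered tuples is coordinatewise: coordinate $j$ carrying source stratum $s_a$ can degenerate to target stratum $s_b$ precisely when $(a,b)\in E(Adj(R,S))$.

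**The key combinatorial step.**
The heart of the argument is to see that adjacency $\overline{\tau}\cap\eta\neq\varnothing$ in the symmetric product is equivalent to the existence of a single ordered representative tuple whose coordinate-by-coordinate degeneration realizes the multiplicity vector $\tau$ as source and $\eta$ as target. I would encode such a representative as a bipartite-type assignment: for each edge $(i,j)$ of $Adj(R,S)$, let $\mu_{(i,j)}$ be the number of coordinates that start in stratum $s_i$ and land in stratum $s_j$. Then the number of coordinates starting in $s_i$ is $\sum_{j\colon(i,j)\in E}\mu_{(i,j)}=t_i$, and the number landing in $s_j$ is $\sum_{i\colon(i,j)\in E}\mu_{(i,j)}=e_j$ — which is exactly the displayed linear system. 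Conversely, any non-negative integer solution $\{\mu_t\}$ of this system prescribes how to build an ordered tuple in $\overline{\tau}$-type whose $\kappa_m$-image degenerates into $\eta$, since along each edge the corresponding stratum genuinely adjoins. Thus the existence of the solution is both necessary and sufficient.

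**Assembling the equivalence.**
With the correspondence between ordered representatives and flows $\{\mu_t\}$ established, I would finish by checking the two implications. For necessity: if $\overline{\tau}\cap\eta\neq\varnothing$ in $(R,S)^{(m)}$, lift to an adjacency in $(R,S)^m$, read off which source coordinate degenerates into which target stratum, and set $\mu_{(i,j)}$ to be the count of each type; this yields a solution. For sufficiency: given a solution, the coordinatewise closure identity guarantees that the corresponding ordered stratum has the target stratum in its closure, and pushing forward under the closed map $\kappa_m$ (closed because it is a finite group quotient, as used throughout Section 5) preserves the adjacency, giving $(\tau,\eta)\in E(Adj((R,S))^{(m)})$.

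**Main obstacle.**
The main subtlety I anticipate is the passage between the ordered and symmetrized pictures: a priori $\kappa_m(\overline{A})$ and $\overline{\kappa_m(A)}$ need not coincide for general continuous maps, so I must invoke that $\kappa_m$ is closed to guarantee $\kappa_m(\overline{A})=\overline{\kappa_m(A)}$ and hence that ordered-level adjacency transfers faithfully to the quotient. Equally I must verify that a flow solution can always be realized by a genuine representative tuple rather than merely a formal count — this requires that whenever $(i,j)$ is an edge, a coordinate in $s_i$ really can be chosen with $s_j$ in the closure of its neighborhood, which is precisely the content of the edge relation in $Adj(R,S)$. Once these two points are secured, the reduction to the transportation-style linear system is routine.
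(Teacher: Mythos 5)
Your proposal is correct and follows essentially the same route as the paper: reduce via Proposition \ref{spoweradj} to edges of the symmetric product digraph $Adj((R,S))^{(m)}$, interpret such an edge as a multiset of $m$ edges of $Adj(R,S)$ with prescribed source multiplicities $t_i$ and target multiplicities $q_i$, and observe that the edge multiplicities $\mu_{(i,j)}$ are exactly the solutions of the displayed transportation system. The extra care you take with closedness of $\kappa_m$ and the coordinatewise factorization of closures is content the paper delegates to the proof of Proposition \ref{spoweradj} rather than a different argument.
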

\begin{proof}
Note that, by Proposition \ref{spoweradj}, $(\tau,\eta)\in E(Adj((R,S)^{(m)}))$ iff there exist a sequence on $m$ pairs $w=((i_1,j_1),\ldots,(i_m,j_m)),\quad i_f,j_f\in \{1,\ldots,k\}$ such that for each $r\in\{1\ldots m\}\quad$ $(s_{i_r},s_{j_r})\in E(Adj(S))$ and $|\{r|i_r=x\}|=t_x,\quad |\{r|j_r=v\}|=q_v.$

Take some $w.$ Note that, if one try to compare $w$ with a sequence of loops, appearance of each pair $(i,j)$ in $w$ decrease multiplicity of vertice $s_i$ in $\tau$ by $1$ and increase multiplicity of $s_j$ by one. Proceeding, by changing all of the loops into an edges of $w,$ one will get a decrease of multiplicity of each vertice by the number of outgoing edges in a sequence and increase of that by the number of ingoing edges.

So the set of edge multiplicities of the sequence $w$ is a solution of the linear system in consideration.
\end{proof}

Thus we've described geometry of adjacency for $D$-stratifications:
\begin{theorem}[Geometry of adjacency graph]\label{adjdigstab}
 Let $S$ be a (monic) stability theory. Then $Adj(S^{(n)})\cong Adj(S)^{(n)},$ $Adj(\underline{S^{(n)}})\cong Adj((S,\underline{Str(S)}))^{(n)}.$
\end{theorem}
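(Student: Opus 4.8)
The plan is to deduce both isomorphisms directly from the commutation of the adjacency-digraph functor with symmetric products (Proposition \ref{spoweradj}), after checking that its hypotheses hold and after carefully matching up the relevant stratifications. First I would verify the standing assumption of Proposition \ref{spoweradj} for the underlying stratified variety of a stability theory: taking $T=\CP^1$ (or $T=\CN$ in the monic case) with $L=Str(S)$ and marked point $\infty$ (resp. $0$), Proposition \ref{proj} guarantees that every symmetric power $(\CP^1)^{(m)}$ (resp. $\CN^{(m)}$) is a genuine real algebraic variety, so Proposition \ref{spoweradj} applies for all $m\le n$.

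For the first isomorphism I would invoke Proposition \ref{spoweradj} with $(T,L)=(\CP^1,Str(S))$, which yields
$$Adj((\CP^1,Str(S)))^{(n)}\cong Adj((\CP^1)^{(n)},Str(S)^{(n)}).$$
It then remains to identify the two sides with the objects named in the statement. The left-hand side is $Adj(S)^{(n)}$ by definition of the adjacency digraph of a stability theory. For the right-hand side, Proposition \ref{stratif} together with Theorem \ref{classicalpoly} identifies the symmetric-product stratification $Str(S)^{(n)}$ on $(\CP^1)^{(n)}$ with the $D$-stratification of $S^{(n)}$, so the right-hand side is $Adj(S^{(n)})$. This gives $Adj(S^{(n)})\cong Adj(S)^{(n)}$.

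For the second isomorphism I would run the same argument, but starting from the refined, point-separating stratification $\underline{Str(S)}=\{s\setminus\{\infty\}\mid s\in Str(S)\}\cup\{\{\infty\}\}$. Proposition \ref{spoweradj} applied to $(\CP^1,\underline{Str(S)})$ gives
$$Adj((S,\underline{Str(S)}))^{(n)}\cong Adj\bigl((\CP^1)^{(n)},(\underline{Str(S)})^{(n)}\bigr).$$
The key input is now Proposition \ref{isolocal}, whose conclusion $T^{(m)}=\underline{S^{(m)}}$ says precisely that the symmetric product of the point-separating stratification equals the local stratification of the symmetric product; that is, $(\underline{Str(S)})^{(n)}=\underline{S^{(n)}}$. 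Substituting this identification yields $Adj(\underline{S^{(n)}})\cong Adj((S,\underline{Str(S)}))^{(n)}$.

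The monic case is entirely parallel, replacing $\CP^1$ by $\CN$ and $\infty$ by $0$ and using the isomorphism $\CN^{(\infty)}\cong V$ from Proposition \ref{proj}. I expect the only genuinely delicate point to be the bookkeeping of stratifications: confirming that the marked-point convention used in Proposition \ref{isolocal} matches the $\underline{Str(S)}$ appearing in the statement, and that Theorem \ref{classicalpoly} really identifies $Str(S)^{(n)}$ with the $D$-stratification rather than with some coarser or finer refinement. Once these identifications are pinned down, the theorem becomes a formal consequence of Propositions \ref{spoweradj} and \ref{isolocal}, with no further computation required.
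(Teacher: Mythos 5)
Your proposal is correct and follows exactly the paper's own route: the first isomorphism is deduced from Proposition \ref{spoweradj}, and the second from Proposition \ref{isolocal} combined with Proposition \ref{spoweradj}. You actually spell out more detail than the paper does (verifying via Proposition \ref{proj} that the symmetric powers are real algebraic varieties, and pinning down the identification of $Str(S)^{(n)}$ with the $D$-stratification), which is a faithful elaboration rather than a different argument.
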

\begin{proof}
 First isomorphism is follows immediately from the Proposition \ref{spoweradj}, while the second is a consequence of Proposition \ref{isolocal} and Proposition \ref{spoweradj}.
\end{proof}

One can also describe all possible adjacency graphs for stability theories.
\begin{proposition}\label{AdjS}
 Let $G$ be a mar\-ked di\-graph with at most three ver\-ti\-ces mar\-ked by some subset of a set $\{s, ss, un\}.$

 Suppose that following conditions hold:
 \begin{enumerate}
  \item There is a loop in each vertex.
  \item $G$ is weakly connected.
  \item There are no ingoing edges at vertex $un.$
  \item If $\{s, ss\}\subset V(G)$ then there is an edge $(s,ss).$
  \item If $ss\in V(G)$ then $s\in V(G).$
 \end{enumerate}
  Then there exist a stability theory $S$ with $Adj(S)\cong G$ with isomorphism sending appropriately marked vertices into corresponding vertices of $Adj(S).$
  Moreover, each adjacency graph of stability theory satisfies these conditions.
\end{proposition}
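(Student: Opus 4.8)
The plan is to prove the statement in two directions: first that the five conditions are necessary for any adjacency digraph $Adj(S)$ of a stability theory, and then that every labelled digraph on a subset of $\{s,ss,un\}$ satisfying them is realized by some $\Omega$. Since a stability theory has at most three strata, the sufficiency direction reduces to a finite enumeration, so the whole proof is a matter of (i) a handful of short topological observations for necessity and (ii) exhibiting one explicit semialgebraic $\Omega\subseteq\CP^1$ per admissible graph.

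For necessity I would read each condition directly off the stratum definitions $\Omega_s=\Omega$, $\Omega_{ss}=\overline{\Omega}\setminus\Omega$, $\Omega_{un}=\CP^1\setminus\overline{\Omega}$. The loops (condition 1) come from $L_i\subseteq\overline{L_i}$ for every nonempty stratum. Weak connectedness (condition 2) follows because if the vertex set split into two groups with no edges between them in either direction, then the union of the strata in one group and the union in the other would be mutually separated (each disjoint from the other's closure, as the closure of a finite union is the union of closures), contradicting the connectedness of $\CP^1$. Condition 3 holds because $\Omega_{un}$ is open while $\Omega_s,\Omega_{ss}\subseteq\overline{\Omega}$, so $\overline{\Omega_s},\overline{\Omega_{ss}}\subseteq\overline{\Omega}$ cannot meet $\Omega_{un}=\CP^1\setminus\overline{\Omega}$. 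Condition 4 is immediate from $\Omega_{ss}\subseteq\overline{\Omega}=\overline{\Omega_s}$, whence $\overline{\Omega_s}\cap\Omega_{ss}=\Omega_{ss}$ is nonempty whenever $ss$ is a vertex; and condition 5 is just the implication $\Omega_{ss}\neq\varnothing\Rightarrow\overline{\Omega}\neq\varnothing\Rightarrow\Omega\neq\varnothing$. (Here the loop $un\to un$ required by condition 1 is understood not to count as an ingoing edge in condition 3, consistently with the convention that loops are suppressed in the figures.)

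For sufficiency I would first enumerate the admissible vertex sets, which condition 5 restricts to $\{s\}$, $\{un\}$, $\{s,un\}$, $\{s,ss\}$ and $\{s,ss,un\}$, and then, within each, list the edge sets allowed by conditions 2--4. The degenerate vertex sets are realized by $\Omega=\CP^1$, by $\Omega=\varnothing$, and by a closed disk (which forces the single edge $un\to s$), while $\{s,ss\}$ splits into the two cases distinguished by whether a semistable point accumulates onto a stable one, realized respectively by $\CP^1$ minus a point and by an open dense set together with one adjoined boundary point. The substantive case is $\{s,ss,un\}$: here $s\to ss$ is forced, ingoing edges into $un$ are forbidden, and weak connectedness forces at least one of $un\to s$, $un\to ss$, leaving exactly six diagrams indexed by the presence of the optional edges $(ss,s)$, $(un,s)$, $(un,ss)$.

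The heart of the argument, and the main obstacle, is realizing these six graphs with independent control of the three optional edges. I would first record the topological dictionary: writing $\partial\overline{\Omega}$ for the topological boundary of $\overline{\Omega}$, one has $\overline{\Omega_{un}}\cap\overline{\Omega}=\partial\overline{\Omega}$, so the edge $un\to s$ is present iff $\partial\overline{\Omega}\cap\Omega\neq\varnothing$ and $un\to ss$ is present iff $\partial\overline{\Omega}\cap\Omega_{ss}\neq\varnothing$. Hence these two edges are governed by whether points of the outer boundary are coloured stable (lie in $\Omega$) or semistable, and can be toggled independently by colouring different boundary arcs differently. The third edge $ss\to s$ is present iff some stable point is a limit of semistable points; I can switch it on by deleting a lower-dimensional interior piece from $\Omega$ (for instance an open radial segment of a disk, whose endpoints stay stable while its interior becomes semistable) and switch it off by keeping the semistable locus closed and disjoint from $\Omega$. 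Taking $\overline{\Omega}$ to be a closed disk and combining a colouring choice on the bounding circle with the optional interior deletion then produces all six diagrams, with representative models such as the open disk (edge $un\to ss$ only), the open disk with a single boundary point adjoined (all three optional edges), the open disk with an interior segment removed (edges $ss\to s$ and $un\to ss$), and their variants with part of the circle recoloured stable for the cases containing $un\to s$. Since each model involves only finitely many strata, verifying the four relevant closure intersections in each is routine, and exhausting the finite list completes the proof.
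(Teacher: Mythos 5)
Your proof is correct and follows the same overall architecture as the paper's — necessity read off from the openness/closedness of the strata, sufficiency by a finite enumeration with explicit semialgebraic witnesses — but your sufficiency half is organized differently and is, in fact, more complete. The paper realizes the three-vertex graphs with ad hoc half-plane examples keyed only to the presence of the edges $(ss,s)$ and $(un,s)$: it exhibits witnesses for the combinations (both present), (only $(un,s)$ present) and (neither present), silently omits the combination where $(ss,s)$ is present but $(un,s)$ is absent, and never varies the edge $(un,ss)$, which happens to be present in every one of its examples even though weak connectedness only forces one of $(un,s)$, $(un,ss)$. Your identification of the three independent optional edges, the resulting six-case enumeration, and the uniform ``closed disk with a coloured boundary circle plus an optional interior slit'' scheme — where the colouring of $\partial\overline{\Omega}$ toggles $(un,s)$ and $(un,ss)$, and the choice of deleting an open versus a closed interior segment toggles $(ss,s)$ — realizes all six graphs and thereby repairs the paper's case analysis. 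The dictionary $\overline{\Omega_{un}}\cap\overline{\Omega}=\partial\overline{\Omega}$ driving your construction is correct, the necessity arguments coincide with the paper's (including the sensible convention that the loop at $un$ does not count against condition 3), and the individual models (open disk; open disk with one boundary point adjoined; open disk minus an open interior segment; the recoloured variants; $\CP^1$, $\varnothing$, a closed disk, and the two dense-$\Omega$ examples for $\{s,ss\}$) all check out as semialgebraic and produce the intended adjacencies.
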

\begin{proof}
If $G$ have only one vertice, then one can take $\Omega=\varnothing$ or $\Omega=\CN\bf{ P}^1$ depending on marking.
Suppose that $G$ has two vertices. If these vertices has markings $s, un$ the it is enough to take quasihyperbolicity or Hurwitz quasistability.
If markings are $s,ss,$ then one can take $\Omega_{ss}=[0,1]$ if there are no edge $(ss, s)$ and $\Omega_{ss}=[0,1)$  if there is one.

Suppose that  $G$ has $3$ vertices. If there are both edges $(ss, s)$ and $(un,s)$ one can take $\Omega=\{z|Im\, z<0\}\cup\{\infty\}\cup\{iz|z\in\R, z\leq 0\}.$
If there are no edge $(ss, s)$ but there exist edge $(un,s)$ it is possible to take $\Omega=\{z|Im z<0\}\cup\{iz|z\in\R, z<0\}.$
If there are no edges of both types one can take Hurwitz stability.

Now we can prove that proposition in the other direction. Since if $T$ in non-empty then $\overline{T}\cap T$ is also non-empty, the first condition holds. $\CP^1$ is connected. Hence second condition holds. Third condition follows from the fact that $\Omega_{un}$ is open, while the fourth and fifth from an embedding $\Omega_{ss}\subset\overline{\Omega_s}.$  
\end{proof}
Proof of the next proposition is completely analogous to the previous one.
\begin{proposition}\label{Adjunderline}
Let $G$ be a marked digraph with at most four vertices marked by some subset of a set $\{s, ss, un, \infty\}.$
 Then there exist a stability theory $S$ with $Adj(\underline{S})\cong G$ with isomorphism sending appropriately marked vertices into corresponding vertices of $Adj(\underline{S})$ iff following conditions hold:
 \begin{enumerate}
  \item There is a loop in each vertex, 
  \item $G$ is weakly connected,
  \item There are no ingoing edges at vertex $un,$
  \item If $\{s, ss\}\subset V(G)$ then there is an edge $(s,ss).$
  \item If $ss\in V(G)$ then $s\in V(G).$
  \item $\infty\in V(G)$
  \item There are no outgoing edges from $\infty.$
 \end{enumerate}
\end{proposition}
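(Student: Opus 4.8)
The plan is to follow the two-directional strategy used for Proposition \ref{AdjS}, the only genuinely new ingredient being the precise description of $\underline{Str(S)}$. By Proposition \ref{isolocal} (in its $m=1$ instance) the stratification $\underline{Str(S)}$ of $\CP^1$ is obtained from $Str(S)=\{\Omega_s,\Omega_{ss},\Omega_{un}\}$ by splitting off the marked point into its own stratum $\{\infty\}$ and deleting $\infty$ from whichever of the three bulk strata contained it. Thus $Adj(\underline{S})$ carries the three bulk vertices $s,ss,un$ (those that survive as non-empty strata) together with a fourth vertex $\infty$, and the whole argument reduces to understanding how this extra vertex sits inside the adjacency digraph.

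For the necessity direction, conditions 1--5 transfer essentially verbatim from the proof of Proposition \ref{AdjS}: the loops come from $\overline{T}\cap T\neq\varnothing$ for every non-empty stratum $T$; weak connectivity from connectedness of $\CP^1$; the absence of ingoing edges at $un$ from the fact that $\Omega_{un}\setminus\{\infty\}$ is still open in $\CP^1$ (a point being closed, removing it preserves openness); and the edge $(s,ss)$ together with the implication $ss\in V(G)\Rightarrow s\in V(G)$ from the inclusion $\Omega_{ss}\subseteq\overline{\Omega_s}$. Conditions 6 and 7 are exactly the two features created by the splitting: $\{\infty\}$ is by construction a stratum of $\underline{Str(S)}$, forcing $\infty\in V(G)$, while $\{\infty\}$ being a single closed point has closure meeting no other (disjoint) stratum, so its only outgoing edge is the loop.

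For the sufficiency direction I would run the same case enumeration as in Proposition \ref{AdjS}, organised by which of $s,ss,un$ occur, and then in each case attach the point $\infty$ so as to realise precisely the prescribed ingoing edges into $\infty$. An ingoing edge $(L,\infty)$ records $\infty\in\overline{L}$, so one tunes whether a region accumulates at $\infty$: placing $\infty$ in the interior of a region forces an ingoing edge from it (an open $\Omega$ cannot isolate $\infty$), whereas keeping a region bounded away from $\infty$ suppresses the edge. The explicit regions from Proposition \ref{AdjS} already realise all admissible digraphs on $\{s,ss,un\}$, and to each one attaches $\infty$ either inside or outside $\overline{\Omega}$, reusing the unbounded models (half-planes, sectors) against the bounded ones (discs, segments).

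The hard part will be the sufficiency bookkeeping, and specifically verifying that every pattern of ingoing edges into $\infty$ permitted by conditions 1--7 is genuinely realisable \emph{simultaneously} with the required adjacencies among $s,ss,un$. The delicate point is the rigidity imposed by $\Omega_{ss}=\overline{\Omega_s}\setminus\Omega_s$: since $\overline{\Omega_{ss}}\subseteq\overline{\Omega_s}$, accumulation of the $ss$-stratum at $\infty$ cannot occur in isolation from accumulation of the $s$-stratum, so an edge $(ss,\infty)$ drags along an edge $(s,\infty)$. I would therefore check, case by case, that whenever such a constraint is active it is already forced by the stated conditions (or else place $\infty$ on $\partial\Omega$ so that it falls into the $ss$-stratum and the relevant closures behave exactly as prescribed), after which each admissible $G$ yields a stability theory $S$ with $Adj(\underline{S})\cong G$, completing the argument in full parallel with Proposition \ref{AdjS}.
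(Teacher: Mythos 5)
Your overall strategy --- describing $\underline{Str(S)}$ via Proposition \ref{isolocal} as the stratification $\{\Omega_s\setminus\{\infty\},\ \Omega_{ss}\setminus\{\infty\},\ \Omega_{un}\setminus\{\infty\},\ \{\infty\}\}$ and then rerunning the argument of Proposition \ref{AdjS} --- is exactly the route the paper takes (its entire proof is the sentence that it is ``completely analogous to the previous one''), and your necessity half is correct: the loop, connectivity, openness and closure arguments carry over verbatim, $\infty\in V(G)$ because $\{\infty\}$ is always a stratum of $\underline{Str(S)}$, and no non-loop edge leaves $\infty$ because $\overline{\{\infty\}}=\{\infty\}$ meets no other stratum.

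The gap is in the sufficiency half, which you explicitly defer to a ``case by case'' check; that check cannot succeed, because conditions 1--7 are not in fact sufficient as listed. Concretely, take $G$ with $V(G)=\{s,un,\infty\}$ and edges the three loops together with $(s,\infty)$ and $(un,\infty)$, but no edge between $s$ and $un$. This satisfies all seven conditions (it is weakly connected \emph{through} $\infty$), yet it is not realisable: $\Omega_{ss}\setminus\{\infty\}=\varnothing$ forces $\Omega_s\setminus\{\infty\}$ to be closed in $\R^2=\CP^1\setminus\{\infty\}$, its complement $\Omega_{un}\setminus\{\infty\}$ is open, and the absence of the edge $(un,s)$ would force $\Omega_{un}\setminus\{\infty\}$ to be closed as well, hence clopen in the connected plane --- impossible when both strata are non-empty. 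The underlying point is that once $\infty$ is split off as its own vertex, condition 2 no longer forces the restriction of $G$ to $\{s,ss,un\}$ to be weakly connected, which is what the explicit realisations of Proposition \ref{AdjS} that you propose to reuse actually require. A second omission is one you yourself half-notice: $(ss,\infty)\in E(G)$ forces $(s,\infty)\in E(G)$, since $\Omega_{ss}\setminus\{\infty\}\subseteq\overline{\Omega_s\setminus\{\infty\}}$, but this implication appears nowhere among conditions 1--7, so a digraph containing $(ss,\infty)$ but not $(s,\infty)$ again satisfies the hypotheses without being realisable. You cannot, as you suggest, verify that such constraints are ``already forced by the stated conditions'' --- they are not --- so either the statement must be strengthened by the corresponding extra conditions, or the plan of attaching $\infty$ to the models of Proposition \ref{AdjS} must be abandoned for those $G$; as written the sufficiency direction fails.
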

\section{Standard stability theories}
Matrix-polynomial duality and corresponding duality between monic and non-monic stability theories allows us to axiomatize classical stability theories.

Invariance under complex conjugation means that the stability theory in consideration has invariant structure for the case of polynomials with real coefficients, namely that conjugate pairs of roots always belong to the same stratum.

\begin{theorem}[Standard stability theories]\label{standard}
 Let $S$ be a stability theory with non-empty $\Omega_{s}$ and $\Omega_{ss}=\partial\Omega_s=\partial\Omega_{un}.$
 
Let, moreover following conditions holds:
 \begin{enumerate}
  \item  $\Omega_{ss}$ is a non-empty irreducible real algebraic curve without isolated points.%% without self-intersections,
  \item Inversion $\lambda\mapsto\frac{1}{\lambda}$ is an automorphism of stratified space $S.$%self-dual
  \item  Complex conjugation $\lambda\mapsto \overline{\lambda}$  is an automorphism of stratified space $S.$
  \item  $0$ and $\infty$ aren't both stable or both unstable.
 \end{enumerate}
 Then, up to the interchange between $\Omega_s$ and $\Omega_{un}$ or getting their union, $S$ is either Hurwitz stability theory( in the case of union we obtain Fenichel stability), Schur stability theory or hyperbolicity theory(with $\Omega_{ss}$ as a real line).
 These three will be called {\it standard stability theories.}
\end{theorem}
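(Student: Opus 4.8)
The plan is to reduce the statement to a single geometric classification: determine all irreducible real algebraic curves $C=\Omega_{ss}$ on $\CP^1$ invariant under the group $V$ generated by inversion $\iota\colon z\mapsto 1/z$ and complex conjugation $c\colon z\mapsto\overline z$. First I would note that $\iota$ and $c$ commute (both compositions send $z$ to $1/\overline z$), so $V\cong(\Z/2)^2$ is the Klein four-group whose third nontrivial element $\iota c\colon z\mapsto 1/\overline z$ is inversion in the unit circle. Thus conditions (2) and (3) say precisely that $C$ is $V$-invariant, while condition (1) is irreducibility. The clause ``up to interchange of $\Omega_s$ and $\Omega_{un}$ or taking their union'' reflects that only the boundary $\Omega_{ss}$ is intrinsic to the problem: once $\Omega_{ss}$ is fixed, the two complementary regions may be labelled stable/unstable either way, or merged, the merge being exactly the Fenichel case. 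So the theorem is equivalent to classifying $V$-invariant irreducible $C$ subject to condition (4).

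Next I would encode $C$ by its reduced real defining polynomial written in $z,\overline z$, say $Q(z,\overline z)=\sum_{j,k}c_{jk}z^j\overline z^k$, where reality of $C$ forces the Hermitian symmetry $\overline{c_{jk}}=c_{kj}$ and hence $\deg_z Q=\deg_{\overline z}Q=:m$. Since $C$ is irreducible, $Q$ is determined up to a real scalar, so each $g\in V$ acts by $Q\circ g=\chi(g)\cdot(\mathrm{unit})\cdot Q$ with $\chi(g)=\pm1$, defining a character $\chi\colon V\to\{\pm1\}$. Translating the two generators: conjugation-invariance gives $\overline{c_{jk}}=\chi(c)\,c_{jk}$, so after rescaling by $1$ or $i$ all coefficients become real; inversion-invariance, after clearing denominators by $z^m\overline z^m$, gives the reciprocal relation $c_{m-j,\,m-k}=\chi(\iota)\,c_{jk}$. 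After removing any $z\overline z=|z|^2$ factor (which would cut out only the origin, not a curve) the polynomial $Q$ genuinely has bidegree $(m,m)$ with this palindromic symmetry, and at this point its structure is that of a two-variable (anti)palindromic polynomial. Here I would invoke the Markovsky--Rao classification \cite{MR2008}: the reciprocal symmetry organises the irreducible factors into $\iota$-stable pieces and reciprocal pairs, which is the tool controlling how such $Q$ can possibly factor.

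The heart of the argument, and the step I expect to be the main obstacle, is the rigidity furnished by irreducibility: I would show that $V$-invariance together with irreducibility forces $m=1$. The mechanism is that a $V$-invariant curve is a union of $V$-orbits, a generic orbit having four points, so a $Q$ of bidegree $(m,m)$ with $m\ge 2$ satisfying the three symmetry constraints above must split off a lower-degree invariant factor (exhibited through the palindromic factorisation structure), contradicting irreducibility. Once $m=1$ is established, solving the three linear symmetry constraints character by character leaves only four cases: the character $(\chi(c),\chi(\iota))=(+,-)$ yields exactly the unit circle $z\overline z-1$ (Schur); $(-,-)$ yields exactly the real axis $\tfrac{1}{2i}(z-\overline z)$ (hyperbolicity); $(-,+)$ yields no curve at all; and the trivial character $(+,+)$ yields the pencil $a(1+|z|^2)+2b\,\Re z$, which contains the imaginary axis $z+\overline z$ (Hurwitz) together with a family of off-centre circles.

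Finally I would bring in condition (4) to finish. Since $\iota$ swaps $0\leftrightarrow\infty$ while $c$ fixes them, the requirement that $0$ and $\infty$ are not both stable nor both unstable means $C$ must either pass through both points or separate them. The off-centre circles of the trivial character have $Q(0)$ and the leading $|z|^2$-coefficient of equal sign, so $0$ and $\infty$ lie on the \emph{same} side; these are therefore eliminated, leaving only the imaginary axis (which passes through both $0$ and $\infty$). The real axis likewise passes through $0$ and $\infty$, and the unit circle separates them, so both satisfy condition (4) automatically. Matching the three surviving normal forms to Hurwitz, hyperbolicity and Schur, with the Fenichel theory appearing as the union variant of the imaginary-axis case, completes the proof.
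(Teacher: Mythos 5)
Your reduction to classifying irreducible real algebraic curves on $\CP^1$ invariant under the Klein four-group $V$ generated by inversion and conjugation is the right framing, and your analysis of the bidegree-$(1,1)$ case (unit circle, real axis, and the pencil containing the imaginary axis together with the off-centre circles orthogonal to the unit circle) is correct and matches the paper's final list. But the step you yourself flag as the heart of the argument --- that $V$-invariance together with irreducibility forces $m=1$ --- is false, and the proposed mechanism cannot be repaired. Invariance of an irreducible curve does not produce a factorisation: $Q\circ g=\pm Q$ can hold for an irreducible $Q$ of any bidegree, since $V$ then acts by automorphisms of the single curve rather than by permuting components, and the observation that a generic $V$-orbit has four points puts no bound on the degree of an invariant irreducible curve. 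The paper itself supplies the counterexamples: Proposition \ref{orbit}, Proposition \ref{palind} and the families of Figure \ref{families} exhibit irreducible conjugation- and inversion-invariant real curves of degrees $4$ and $6$, e.g. $(x^2+y^2)^2-cx(x^2+y^2)+cx^2+10c(x^2+y^2)-cx+1=0$. What excludes these from Theorem \ref{standard} is not irreducibility but condition (4): for all of them $0$ and $\infty$ lie in the same stratum.

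Consequently condition (4) cannot be deferred to a final clean-up of an $m=1$ pencil; it is what drives the degree collapse, and this is how the paper's proof is organised. Writing the defining polynomial in polar coordinates, condition (4) splits into the case where $0$ and $\infty$ both lie on $\Omega_{ss}$ (the radial factor from the Markovsky--Rao decomposition \cite{MR2008} is then a monomial in $r$, so the irreducible curve is a line through the origin, giving the real or imaginary axis) and the case where the defining polynomial has opposite signs at $0$ and at $\infty$ (the radial factor is then antipalindromic, hence divisible by $r-1$ in every direction, and irreducibility forces $\Omega_{ss}=\{x^2+y^2=1\}$). An additional argument is also needed --- and missing from your sketch --- to convert condition (4), which is stated in terms of strata, into the sign statement: one must show that an irreducible defining polynomial changes sign across the curve at nonsingular points, so that the two sign families of complementary regions can be identified with $\Omega_s$ and $\Omega_{un}$. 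Your endgame for the trivial character would survive as a special case, but without a correct replacement for the degree bound the proposal does not prove the theorem.
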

\begin{proof}
Note that instead of invariance under inversion $\lambda\mapsto\frac{1}{\lambda}$ and conjugation $\lambda\mapsto\overline{\lambda}$ one can use invariance under conjugation and conjugate of inversion
$\lambda\mapsto\frac{1}{\overline{\lambda}}.$
Take some defining polynomial $f(x,y)$ of curve $\Omega_{ss}$ and write it in the polar coordinates $x\mapsto r\cos\varphi,\quad y\mapsto r\sin\varphi.$
Invariance properties lead us to the following equivalences:
$$\begin{array}{l}f(r,\cos\varphi,\sin\varphi)=0\Leftrightarrow f(\frac{1}{r},\cos\varphi,\sin\varphi)=0,\\f(r,\cos\varphi,\sin\varphi)=0\Leftrightarrow(r,\cos\varphi,-\sin\varphi)=0\end{array}$$

By Theorem 2 \cite{MR2008} for each $\varphi\in [0,2\pi)$ $f$ can be decomposed into product $f=hq,$ such that $h$ consists from all only non-negative real roots of $f$ and either antipalindromic or palindromic on $r.$ Invariance under inversion implies that either $\{0,\infty\}\subset\Omega_{ss}$ or $\{0,\infty\}\cap\Omega_{ss}=\varnothing.$

Suppose that $f$ has different signs on $0$ and on $\infty$ or both $0$ and $\infty$ belongs to the curve defined by $f.$
In the latter case $h$ in polar coordinates is monomial on $r.$ Hence $h$ does not depend on $r.$ Hence $h$ either defines an isolated point $(0,0)$ or $h$ defines a ray. In the first case one can take another $\varphi$ and get the same situations, or the situation when $h$ defines a ray. If $h$ defines a ray then, assuming that $\Omega_{ss}$ is irreducible curve we get $f$ is either $x$ (quasihyperbolicity), or $y$ (Hurwitz stability).

Assume now that $f$ has different signs of $0$ and on $\infty.$ Note that $f$ changes sign between $0$ and $\infty$ iff $h$ changes sign. Hence $h$ is antipalindromic on $r.$
Hence, by Lemma 3 \cite{MR2008}  there exist a palindromic polynomial $g(r,\varphi)$ such that $h=(r-1)g.$ Therefore the set of points with $r=1$ is a subset of the zero set of $h.$ Note that, as  $\Omega_{ss}$ does not have isolated points and dependence between roots and coefficients is smooth, there are infinitely many such directions. Hence, as $\Omega_{ss}$ is irreducible, $f$ has a zero set defined by the polynomial $x^2+y^2-1.$% Note that in this, only possible case $f=h$ and has different signs at zero and at infinity.

Now we need to prove that if $\Omega_{ss}$ is an irreducible real algebraic curve and $0, \infty$ belong to different strata then one can take such $f$ that $f(0)$ and $f(\infty)$ have different signs.

As $\Omega_{ss}$ is irreducible there exist an irreducible polynomial  $f(x,y)$ defining $\Omega_{ss}.$ 

Note that if $z=f(x,y)$ does not change sign on the direction  of transversal to irreducible curve $\Omega_{ss}$ at some non-singular point of $\Omega_{ss}$ then it does not change sign at all non-singular
points. Namely,  if it does not change sign at one non-singular point, then it does not change sign on infinitely many of them. Take derivatives until first non-zero one appear. Order of that derivatives will be
different on non-singular parts of a curve where $z=f(x,y)$ does not change size and where it does change the sign. That contradicts irreducibility.

Suppose that the function $z=f(x,y)$ does not change sign on a general position point of some connected component of $\Omega_{ss}.$ %%Смена знаков, индукция по числу производных.
Consider an algebraic curve $C$ defined by equations $g_x=\frac{\partial f(x,y)}{\partial x},\quad g_y=\frac{\partial f(x,y)}{\partial y}.$
$C$ intersects $\Omega_{ss}$ at the infinite number of points. So it is equal to $\Omega_{ss}.$
Therefore $\Omega_{ss}$ is defined by some irreducible factor $f_1$ of $gcd(g_x, g_y).$ But $\deg f_1< \deg f.$  Proceed by induction.

So $\Omega_{ss}$ decomposes $\CP^1$ into two finite families of regions defined by the sign of the defining polynomial. 
That decomposition has the property that the border between regions that belong to the same family consists of finite number of points.

It is easy to prove that if there is a decomposition of plane with  given border and such a property exists then that decomposition is unique.
Namely, if one take a region and assign it to the family then, using induction, one can uniquely determine the family of any other region.

Recall that $\Omega_{ss}=\partial\Omega_s=\partial\Omega_{un}.$ Hence one of that families could be identified with $\Omega_s$ and the other with $\Omega_{un}.$% could be only defined as that   
Hence if $0,$ $\infty$ belongs to different strata there exist $f(x,y)$ with different signs at $0$ and at $\infty.$
\end{proof}
Note that among $4$ conditions of theorem \ref{standard} first and the fourth are those that put some boundaries on class of stability theories.

 \afterpage{\clearpage}
\begin{figure}[!p]
\begin{tabular}{cc}
 {\animategraphics[loop,controls, scale=0.2]{6}{./figures/Animation/Animation1/1-frame-}{0}{99}}\\\\
 (a) \quad $\begin{array}{l}\Omega=\{(x^2+y^2)^2-cx(x^2+y^2)+\\+cx^2 +10c(x^2+y^2)-cx+1< 0\},\,c\in[-5,5].\end{array}$\\\\
 {\animategraphics[loop,controls, scale=0.2]{6}{./figures/Animation/Animation2/2-frame-}{0}{99}}\\\\
 (b) $\begin{array}{l}\Omega=\{(x^2+y^2)^2-cx(x^2+y^2)-x^2+\\ +\frac{3}{2}c(x^2+y^2)-cx+1< 0\},\,c\in[-5,5].\end{array}$\\\\
 {\animategraphics[loop,controls, scale=0.2]{18}{./figures/Animation/Animation3/3-frame-}{0}{549}}\\\\ 
 (c)\quad $\begin{array}{l}\Omega=\{(x^2+y^2)^3-cx(x^2+y^2)^2+cx^2(x^2+y^2)-\\-cx^3+cx(x^2+y^2)+cx^2-cx+1<0\},\,c\in[-5,50].\end{array}$
\end{tabular}
\phantomcaption
\end{figure}
\begin{figure}[!p]
\ContinuedFloat
\begin{tabular}{cc}
 {\animategraphics[loop,controls, scale=0.2]{9}{./figures/Animation/Animation4/4-frame-}{0}{199}}\\\\
 (d)\quad $\begin{array}{l}\Omega=\{(x^2+y^2)^3-cx(x^2+y^2)^2+\\+cx^2(x^2+y^2)-4c(x^2+y^2)^2-cx^3+cx(x^2+y^2)+\\+cx^2-4c(x^2+y^2)-cx+1 < 0\},\,c\in[-50,50].\end{array}$ \\\\
 {\animategraphics[loop,controls, scale=0.2]{9}{./figures/Animation/Animation7/7-frame-}{0}{199}}\\\\
 (e)\quad $\Omega=\{(x^2+y^2)-cx+1< 0\},\,c\in[-10,10].$\\\\
 {\animategraphics[loop,controls, scale=0.2]{12}{./figures/Animation/Animation6/6-frame-}{0}{299}}\\\\
 (f)\quad$\begin{array}{l}\Omega=\{(x^2+y^2)^3-cx(x^2+y^2)^2\\+cx^2(x^2+y^2)-2c(x^2+y^2)^2-cx^3\\+cx(x^2+y^2)+cx^2-2c(x^2+y^2)-cx+1 < 0\},\,c\in[-150,150].\end{array}$\end{tabular}
\caption{$1$-parametric families of conjugation and inversion invariant irreducible real curves.}
\label{families}
\end{figure}

Condition of irreducibility is of technical nature: any curve with $k$ irreducible components could be translated among the actions inversion and conjugation and produce some invariant border with at most $4k$ irreducible components. Moreover, any union of invariant curves is invariant. Namely, following holds:

\begin{proposition}\label{orbit}
 Let $S$ be a stability theory with  $\Omega_{ss}=\partial\Omega_s=\partial\Omega_{un}.$
 Let, moreover, following conditions holds:
 \begin{enumerate}
  \item $\Omega_{ss}$ is a real algebraic curve,
  \item Inversion $\lambda\mapsto\frac{1}{\lambda}$ is an automorphism of stratified space $S;$%self-dual
  \item Complex conjugation $\lambda\mapsto \overline{\lambda}$  is an automorphism of stratified space $S.$
  \end{enumerate}
 
 Then there exist a polynomial $f(x,y)$ such that zero set of a polynomial
 
 $$
 F(x,y)=(x^2+y^2)^{\tau}f(x,y)f(x,-y)f(\frac{x}{x^2+y^2},\frac{y}{x^2+y^2})f(\frac{x}{x^2+y^2},-\frac{y}{x^2+y^2}),
 $$
 
 is an affine part of $\Omega_{ss}.$
 
 Here $\tau$ is a minimal integer from $[0,2 deg (f(x,y))]$ such that $F(x,y)$ is a polynomial.
 
 Moreover, zero set of each polynomial representable as $F(x,y)$ for some $f(x,y)$ satisfies conditions 1-3.
 \end{proposition}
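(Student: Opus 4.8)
The plan is to read the four arguments of $f$ in the definition of $F$ as the orbit of $f$ under the group generated by the two prescribed automorphisms, and then to exploit that an orbit product is automatically invariant.

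First I would pin down that group. Writing $\lambda=x+iy$ so that $\CN=\R^2$, complex conjugation is $C\colon(x,y)\mapsto(x,-y)$ and inversion $\lambda\mapsto 1/\lambda$ is $I\colon(x,y)\mapsto(x/(x^2+y^2),-y/(x^2+y^2))$. A one–line computation gives $C^2=I^2=\mathrm{id}$ and $CI=IC\colon\lambda\mapsto 1/\overline{\lambda}$, so $G:=\langle C,I\rangle$ is the Klein four–group $\{\mathrm{id},C,I,CI\}$. The point is that the four arguments appearing in $F$ are exactly $f\circ\mathrm{id}$, $f\circ C$, $f\circ CI$, $f\circ I$; hence, away from the origin, $F=(x^2+y^2)^{\tau}\prod_{g\in G}(f\circ g)$, and the factor $(x^2+y^2)^{\tau}$ serves only to clear the denominators $(x^2+y^2)^{\deg f}$ produced by the two pullbacks involving $I$, which is why $\tau\in[0,2\deg f]$.

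Next I would dispatch the converse direction. Since right translation by $C$ (respectively by $I$) permutes $G$, the rational function $\Pi:=\prod_{g\in G}(f\circ g)$ satisfies $\Pi\circ C=\Pi$ and $\Pi\circ I=\Pi$. Combined with $(x^2+y^2)\circ C=x^2+y^2$ and $(x^2+y^2)\circ I=(x^2+y^2)^{-1}$ this yields $F\circ C=F$ and $F\circ I=(x^2+y^2)^{-2\tau}F$. Because $F$ is a nonzero real polynomial in two variables, its real zero set $Z(F)$ is a real algebraic set of dimension at most one, and the two identities show that $Z(F)$ is carried to itself by $C$ and by $I$ away from the origin; taking $f$ nonconstant so that $Z(F)$ is genuinely a curve, this is exactly conditions 1--3.

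For the forward direction I would start from an invariant curve $\Omega_{ss}$, take a reduced real polynomial $g$ whose real zero set is the affine part of $\Omega_{ss}$, and put $f=g$. Invariance of $\Omega_{ss}$ gives $Z(f\circ h)=h^{-1}(\Omega_{ss})=\Omega_{ss}$ for each $h\in G$ (reading the two rational pullbacks through their cleared numerators), so $Z(\Pi)=\Omega_{ss}$ away from the origin, and $F$ has the same zero locus as $\Pi$ there. To see the role of a general $f$ one may instead decompose $\Omega_{ss}$ into irreducible components, observe that $G$ permutes them, and take $f$ to be the product of defining polynomials of one representative per $G$–orbit; then $\bigcup_{h\in G}Z(f\circ h)$ sweeps out every component and again equals $\Omega_{ss}$.

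The hard part is purely local, at the two points $0$ and $\infty$ interchanged by $I$. The clearing factor $(x^2+y^2)^{\tau}$ can attach the origin to $Z(F)$ as an isolated real point, and $I$ is represented by a rational map with a pole there, so I must check that with the \emph{minimal} admissible $\tau$ the one–dimensional part of $Z(F)$ is precisely the affine part of $\Omega_{ss}$. Concretely this means showing that the minimal $\tau$ equals $2\deg f$ minus the exact order to which $(x^2+y^2)$ divides the cleared numerator $\Pi\cdot(x^2+y^2)^{2\deg f}$, and that inversion–invariance forces the compatible behaviour $0\in\Omega_{ss}$ if and only if $\infty\in\Omega_{ss}$; all the remaining steps are the formal orbit–product bookkeeping described above.
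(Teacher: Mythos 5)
Your proposal is correct and takes essentially the same route as the paper: the paper's own (two-line) proof likewise reads the four arguments of $f$ as the orbit of $f$ under the Klein four-group generated by conjugation and inversion, deduces invariance of $Z(F)$ from the fact that these maps permute the factors, and deduces the forward direction from $Z(f\circ h)=h^{-1}(\Omega_{ss})=\Omega_{ss}$. Your final paragraph is in fact more scrupulous than the paper, which does not address the behaviour at the origin introduced by the clearing factor $(x^2+y^2)^{\tau}$ or the choice of minimal $\tau$ at all.
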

 \begin{proof}

Note that the conditions 1-2 are the conditions of invariance of $\Omega_{ss}$ under action of $\faktor{\mathbb Z}{2\mathbb Z}\oplus\faktor{\mathbb Z}{2\mathbb Z}$ on $\CP^1.$ Set of points $$\{(x,y), (x,-y), (\frac{x}{x^2+y^2},\frac{y}{x^2+y^2}),(\frac{x}{x^2+y^2},-\frac{y}{x^2+y^2})\}$$ constitutes an orbit.

Hence, if zero set of $f(x,y)$ is $\Omega_{ss}$ then $F(x,y)$ has the same zero set.

Take some polynomial representable as $F(x,y).$ It's zero set is invariant under inversion and conjugation, as inversion and conjugation induces transposition of it's factors.
 \end{proof}
 
Fourth condition is more interesting. It assumes that the definition of stability has something to do with measuring a root, supposing that it is ``big'' or ``small'' in some sense.
What will happen if that condition will be dropped?

 \begin{proposition}\label{palind}
 Let $S$ be a stability theory with $\Omega_{ss}=\partial\Omega_s=\partial\Omega_{un}.$
 Let, moreover, following conditions holds:
 \begin{enumerate}
  \item $\Omega_{ss}$ is real algebraic curve.
  \item Inversion $\lambda\mapsto\frac{1}{\lambda}$ is an automorphism of stratified space $S.$%self-dual
  \item Complex conjugation $\lambda\mapsto \overline{\lambda}$  is an automorphism of stratified space $S.$
  \item $0$ and $\infty$ are both either stable or unstable.
  \item There exists a polynomial $f(x,y)$ with zero set $\Omega_{ss}$ such that for each $k\in{\mathbb R}\cup\{\infty\}$ {\it complex} roots of $f(x,kx)$ are inversion-invariant and $f(x,y)$ is even on $y.$
  \end{enumerate}
Then $\Omega_{ss}$ could be represented as zero set of an even degree polynomial: 
$$
\sum_{i=0}^{\frac{n}{2}}\sum_{j=0}^{\lfloor\frac{i}{2}\rfloor}a_{ij}x^{i-2j}(x^2+y^2)^{j}(1+(x^2+y^2)^{\frac{n}{2}-i})
$$

Some examples of families polynomials of that type could be seen on Figure \ref{families}.
\end{proposition}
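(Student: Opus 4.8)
The plan is to characterize $\Omega_{ss}$ under the combined symmetry and the ``$0,\infty$ on the same side'' hypothesis (condition 4), contrasting with Theorem \ref{standard} where $0$ and $\infty$ were forced into different strata. The guiding idea is to exploit condition 5, which provides a defining polynomial $f(x,y)$ that is even in $y$ and whose sectional complex roots (in the direction $y=kx$) are invariant under inversion. Following the proof of Theorem \ref{standard}, I would pass to the univariate restriction $f(x,kx)$ for a fixed slope $k$, so that inversion acts as $r\mapsto 1/r$ on the radial variable. The inversion-invariance of the complex root set means precisely that this univariate polynomial is, up to scaling, \emph{palindromic} (self-reciprocal). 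This is where the classification of palindromic polynomials by Markovsky and Rao \cite{MR2008}, already invoked for Theorem \ref{standard}, should do the work, but now under condition 4 one is in the palindromic rather than antipalindromic case: there is no mandatory factor $(r-1)$, and hence no forced unit circle.

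The key steps, in order, would be: first, translate the three invariance conditions 1--3 into algebraic symmetries of $f$, giving evenness in $y$ (from complex conjugation) and reciprocal symmetry of the radial sections (from inversion), exactly as set up in Proposition \ref{orbit}. Second, expand a general polynomial that is even in $y$ in the basis of monomials $x^{i-2j}(x^2+y^2)^j$, since every even-in-$y$ polynomial is a polynomial in $x$ and $x^2+y^2$; the inner sum over $j$ up to $\lfloor i/2\rfloor$ records writing $x^i$-type terms using powers of $x^2+y^2$. Third, impose the palindromic (reciprocal) symmetry: under $(x,y)\mapsto\bigl(\tfrac{x}{x^2+y^2},\tfrac{y}{x^2+y^2}\bigr)$ the monomial $x^{i-2j}(x^2+y^2)^j$ of total degree $i$ is sent to a monomial of complementary degree, and requiring invariance of the zero set forces the coefficients to appear symmetrically—this is the source of the factor $\bigl(1+(x^2+y^2)^{\frac n2-i}\bigr)$, which pairs each term of degree $i$ with its reciprocal partner of degree $n-i$. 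Fourth, check that condition 4 (same side) is precisely what selects the palindromic normalization, even total degree $n$, and the $+$ sign in that factor, as opposed to the antipalindromic case yielding the circle in Theorem \ref{standard}.

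The main obstacle I expect is step three: verifying that reciprocal invariance of the \emph{zero set} (not of the polynomial itself, which is only determined up to a scalar and up to the power $(x^2+y^2)^\tau$ appearing in Proposition \ref{orbit}) forces exactly the displayed coefficient-pairing, and that no spurious lower-degree or isolated-point components are introduced. Concretely, one must show that after clearing the denominators introduced by inversion—multiplying by the appropriate power of $x^2+y^2$ as in the definition of $\tau$ in Proposition \ref{orbit}—the resulting polynomial equality of zero sets upgrades to a proportionality of coefficients, so that $a_{ij}$ and the coefficient of the degree-$(n-i)$ partner must coincide. Handling the case $i=\tfrac n2$ (the self-paired middle degree, where the factor $1+(x^2+y^2)^{0}=2$ degenerates) needs a separate bookkeeping argument. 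Once the coefficient symmetry is established, assembling the terms into the displayed double sum is a routine reindexing, and the evenness in $y$ guarantees every basis element is genuinely a polynomial in $x$ and $x^2+y^2$, closing the forward direction; the converse—that any such $F$ has an inversion- and conjugation-invariant zero set—follows immediately by inspecting the action on the two distinguished monomial factors, just as in the second half of Proposition \ref{orbit}.
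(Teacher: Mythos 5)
Your proposal follows essentially the same route as the paper's proof: restrict $f$ to radial sections (the paper uses polar coordinates, you use $f(x,kx)$, which is the same device), invoke Markovsky--Rao's classification to conclude the sections are palindromic in $r$ with condition 4 ruling out the antipalindromic/circle case already treated in Theorem \ref{standard}, and then use evenness in $y$ to expand $f$ as a polynomial in $x$ and $x^2+y^2$ with the reciprocal coefficient pairing producing the factor $\bigl(1+(x^2+y^2)^{\frac{n}{2}-i}\bigr)$. Your extra care about zero-set versus coefficient invariance and the self-paired middle degree is a reasonable elaboration of details the paper leaves implicit, but it does not change the argument.
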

\begin{proof}

Write $f(x,y)$ in polar coordinates. Using Condition 5 and Theorem 2 \cite{MR2008}, recalling that all antipalindromic possibilities have been explored in proof of Theorem 1, we obtain that $f(r,\cos\varphi,\sin\varphi)$ is palindromial as polynomial on $r$ and is also a polynomial on $r\cos\varphi,r\sin\varphi.$

Using the fact that $$\R[r,\cos\varphi,\sin\varphi]\cong \faktor{\R[r,t,q]}{\langle t^2+q^2-1\rangle}$$ and that $f(x,y)$ is ev\-en on $y$ we obtain
$$
f(x,y)=\sum_{i=0}^{\frac{n}{2}}\sum_{j=0}^{\lfloor\frac{i}{2}\rfloor}a_{ij}x^{i-2j}(x^2+y^2)^{j}(1+(x^2+y^2)^{\frac{n}{2}-i})
$$

\end{proof}

That section is {\it de facto} devoted to the study of irreducible real algebraic curves on $\CP^1$  invariant under the action $\faktor{\mathbb Z}{2\mathbb Z}\oplus\faktor{\mathbb Z}{2\mathbb Z}$ via inversion and conjugation. 

That kind of questions could be understood as a special case of a seemingly open problem.
\begin{problem}
Let $G$ be a finite subgroup of a M\"obius group of fractional-linear transformations acting on $\CP^1.$

How to describe $G$-invariant irreducible real algebraic curves on $\CP^1$? 

Note that these groups could be completely classified \cite{Tot2002}.
\end{problem}


\begin{thebibliography}{100}
\bibitem{Ack1980}{\it Ackermann J.} Parameter Space Design of Robust Control Systems// IEEE Transactions on Automatic Control V,AC-25(1980), P.~1058\--1072
\bibitem{Ack2002}{\it Ackermann J.} Robust Control. The parameter space approach. Second Edition. Springer. 2002.
\bibitem{AK2003}{\it Ackermann J., Kaesbauer D.} Stable polyhedra in parameter space//Automatica. V.39(2003). P.~937\--943
\bibitem{AHCMFA2012}{\it Aguirre-Hernandez B.,Cisneros-Molina J.L. Frias-Armenta M.-E.} Polynomials in Control Theory Parametrized by Their Roots//International Journal of Mathematics and Mathematical Sciences V.2012 (2012), Article ID 595076, 19 pages.
\bibitem{AHFAV2012}  B. Aguirre-Hernandez,  M.-E. Frias-Armenta, F. Verduzco, On differential structures of polynomial spaces in control theory,  Journal of Systems Science and Systems Engineering 21(2012),  372\--382
\bibitem{Aky2009}{\it Akyar H.} On Contractibility Properties of Families of Stable Polynomials//International Mathematical Forum V.4(2009). P.~2149\--2156.
\bibitem{Arn1970a}{\it Arnol'd V.I.} Topological invariants of algebraic functions. //Trans. Moscow Math. Soc.
V.21(1970) P.~30\--52.
\bibitem{Arn1970}{\it Arnol'd V.I.} Topological invariants of algebraic functions. II//Functional Analysis and its Applications, V.4(1970). P.~91\--98,
\bibitem{Arn1972}{\it Arnol'd V.I.} Lectures on bifurcations in versal families//Russian Mathematical Surveys V.27(1972) P.~54\--123.
\bibitem{Arn1986}{\it  Arnol'd V.I.} Hyperbolic polynomials and Vandermonde mappings//Functional Analysis and its Applications, V.20(1986). P.~125\--127.
\bibitem{Arn1989}{\it Arnol'd V.I.} Spaces of functions with moderate singularities//Functional Analysis and its Applications. V.23(1989). P.~169\--177.
\bibitem{AY1997}{\it Ashokkumar C.R., Yedavalli R.K.} Eigenstructure perturbation analysis in disjointed domains for linear uncertain systems.//International Journal of Control V.67(1997)
P.~887–899
\bibitem{AGRR2007}{\it Audenaert K., Godsil C., Royle G., Rudolph T.} Symmetric squares of graphs//Journal of Combinatorial Theory. Series B.
V.97(2007), P.~74\--90.  arXiv:math/0507251
\bibitem{BM2003}{\it Bachelier O., Mehdi D.} Robust Matrix ${\EuScript D}_U$ stability analysis// Int.J. Robust Nonlinear Control V.13(2003) P.533--558.
\bibitem{BHPM2004}{\it Bachelier O., Henrion D., Pradin B., Mehdi D.} Robust Root-Clustering of a Matrix in Intersections or Unions of Regions.//SIAM J. Control Optim., V.43(2004). P.~1078\--1093.
\bibitem{BPR1998}{\it Basu S., Coste M., Roy M.-F.} Real algebraic geometry. Springer-Verlag. 1998.
\bibitem{BPC2006} S. Basu, Pollack R., M.-F. Coste-Roy, Algorithms in real algebraic geometry. Springer-Verlag, Berlin, Heidelberg.  2006.
\bibitem{Bar1993}{\it Barmish B.R.} New Tools for Robustness of Linear Systems. Macmillan. 1993.
\bibitem{Ber1962}{\it Berge C.} Theory of graphs and its applications. Wiley. 1962.
\bibitem{BKC1995}{\it Bhattacharyya S.P., Keel L.H., Chapellat H.} Robust Control: The Parametric Approach. Prentice-Hall. 1995.
\bibitem{BBM2005}{\it Bosche J., Bachelier O., Mehdi D.} An approach for robust matrix root-clustering analysis in a union of regions//IMA Journal of Mathematical Control and Information. V.22(2005). P.~227\--239., P.~403–406.
\bibitem{BS2004}{\it Borcea J., Shapiro S.} Classifying real polynomial pencils.// Int. Math. Res. Not. V.69(2004). P.~3689–3708 arXiv:math/0404215v1
\bibitem{BU1931}{\it Borsuk K., Ulam S.} On symmetrical products of topological spaces//Bull. Amer. Math. Soc. V.37(1931). P.~875\--882. 
\bibitem{Bot1956}{\it Bottema O.} The Routh-Hurwitz condition for the biquadratic equation//Indagationes Mathematicae V.18(1956) P.~403\--406.
\bibitem{Bru1987}{\it Brumfiel G.W.} Quotient Spaces for Semialgebraic Equivalence Relations//Mathematische Zeitschrift V.195 (1987) P.69--78.
\bibitem{CCGL2014}{\it Castle E., Chakrabarti D., Gunderman D., Lehet E.} Hölder estimates for Cauchy-Type Integrals and proper holomorphic mappings of symmetric products // arXiv:1408.1138
\bibitem{CG2015}{\it Chakrabarti D., Gorai S.} Function Theory and Holomorphic Maps on Symmetric Products of Planar Domains// The Journal of Geometric Analysis V.25(2015) P.~2196\--2225  arXiv:1307.4607
\bibitem{Che1986}{\it Chevallier B.} Courbes maximales de Harnack et discriminant//S\'eminaire sur la g\'eom\'etrie alg\'ebrique r\'eelle, II. Publ.Math.Univ. Pariv VII. V.24(1986). P.~41\--65
\bibitem{CG1996}{\it Chilali M., Gahinet P.} $H_{\infty}$ Design with Pole placement constraints: an LMI approach// IEEE Transactions on Automatic Control V.41(1996) P.~358--367.
\bibitem{CGA2002} {\it Chilali M., Gahinet P., Apkarian P.} Robust pole placement in LMI regions//IEEE Transactions on Automatic Control V.44(1999). P.~2257\--2270.
\bibitem{Chi2003}{\it Chipalkatti J.D.} On equations defining Coincident Root loci//Journal of Algebra V.267(2003) P.~246--271.  arXiv:math/0110224
\bibitem{Chi2004}{\it Chipalkatti J.D.} Invariant equations defining coincident root loci//Archiv der Mathematik V.83(2004) P.~422--428
\bibitem{Cos2005}{\it Costara C.} On the spectral Nevanlinna-Pick problem//Studia Math., V.170(2005), P.~23\--55.
\bibitem{Cos2000}{\it Coste M.} An Introduction to Semialgebraic Geometry// Dip. Mat. Univ. Pisa, Dottorato di Ricerca in Matematica, Istituti Editoriali e Poligrafici Internazionali. Pisa. 2000. 
https://perso.univ-rennes1.fr/michel.coste/polyens/SAG.pdf
\bibitem{Ded1992}{\it Dedieu J.-P.} Obreschkoff's theorem revisited: what convex sets are contained in the set of hyperbolic polynomials?//J.of Pure and Applied Algebra V.81(1992) P.~269--278
\bibitem{DG1994}{\it Dedieu J.-P., Gregorac R.J.}Corrigendum. Obreschkoff's theorem revisited: What convex sets are contained in the set of hyperbolic polynomials?: Journal of pure and applied algebra 81 (3) (1992) 269–278//J. of Pure and Applied Algebra V.93(1994). P.~111\--112. 
\bibitem{DK1981a}{\it Delfs H., Knebusch M.} Semialgebraic topology over a real closed field I: Paths and components in the set of rational points of an algebraic variety//Mathematische Zeitschrift V.177(1981) P.~107--129
\bibitem{DK1981}{\it Delfs H., Knebusch M.} Semialgebraic topology over a Real Closed Field II: Basic theory of Semialgebraic Spaces.//Mathematische Zeitschrift V.178(1981) P.~175--213.
\bibitem{DK1985}{\it Delfs H., Knebusch M.} Locally Semialgebraic Spaces. Lecture Notes in Mathematics V.1173. Springer. 1985.
\bibitem{Del1991}{\it Delfs H.} Homology of Locally Semialgebraic Spaces. Lecture Notes in Mathematics V.1484. Springer. 1991.
\bibitem{DW1994}{\it Duan G.-R., M.-Z. Wang} Properties of the entire set of Hurwitz polynomials and stability analysis of polynomial families//IEEE Transactions on Automatic Control. V. 39(1994). P.~2490\--2494.
\bibitem{DW1996}{\it Duan G.-R., Wang M.-Z.} Simple connectivity of the D-stability domain and D-stability of polynomial families//Proceedings of the 35th IEEE Conference on Decision and Control, 1996., V.2 P.~1301\--1302. 
\bibitem{EZ2005}{\it Edigarian A., Zwonek W.} Geometry of the symmetrized polydisc// Archiv Math. V.84(2005) P.~364\--374. arXiv:math/0402033
\bibitem{EG2002}{\it Eremenko A., Gabrielov A.} Pole Placement by Static Output Feedback for Generic Linear Systems//SIAM J. Control Optim. V.41(2002) P.~303\--312.
\bibitem{FM1978}{\it Fam A., Medich J.} A canonical parameter space for linear systems design//IEEE Transactions on Automatic Control V.23(1978). P.~354\--358
\bibitem{Fen1979}{\it Fenichel N.} Geometric singular perturbation theory for ordinary differential equations//Journal of Differential Equations V.15(1979). P.77\--105.
\bibitem{FD1929}{\it Frazer R.A., Duncan W.J.} On the criteria of stability for small motions // Proc. Royal Soc. Ser. A. V.124(1929) P.~652\--654.
\bibitem{Fun1930}{\it Funkhouser H.G.} A Short Account of the History of Symmetric Functions of Roots of Equations//The American Mathematical Monthly V.37(1930) P.~357\--365.
\bibitem{GKY1999}{\it Guest M.A., Kozlowski A., Yamaguchi K.} Spaces of polynomials with roots of bounded multiplicity//Fundamenta Math. V.161(1999). P.~93\--117.  arXiv:math/9807053
\bibitem{GJ1981}{\it Gutman S., Jury E.I.} A General theory for root-clustering in Subregions of the Complex Plane//IEEE Trans. Automat. Control
V.26(1981). P.~853\--863.
\bibitem{Gut1990}{\it Gutman S.} Root Clustering in Parameter Space. Lecture Notes in Control and Information Sciences. V.141. Springer 1990.
\bibitem{G2004}{\it Gryazina E.N.} The $D$-decomposition theory//Automation and Remote Control V.65(2004) P.~1872--1884.
\bibitem{GP2006}{\it Gryazina E.N., Polyak  B.T.} Stability regions in the parameter space: $D$-decomposition revisited//Automatica. V.42(2006). P.~13\--26.
\bibitem{GPT2008}{\it Gryazina E.N., Polyak  B.T., Tremba A.A.}  $D$-decomposition technique state-of-the-art.//Automation and Remote Control V.69(2008) P.~1991\--2026.
\bibitem{Hat2002}{\it Hatcher A.} Algebraic Topology. Cambridge University Press. 2002.
\bibitem{Hat1975}{\it Hattori A.} Topology of $\CN^n$ minus a finite number of affine hyperplanes in general position//J. Fac. Sci. Univ. Tokyo. V.22(1975). P.~205\--219.
\bibitem{HS2008}{\it Henrion D., Sebek M.} Plane geometry and convexity of polynomial stability regions.//Proceedings of ISSAC'08. ACM. 2008. P.~111\--116.  arXiv:0801.2499
\bibitem{HV2007}{\it Helton J.W., Vinnikov V.} Linear matrix inequality representation of sets//Communications on Pure and Applied Mathematics. V.60(2007). P.~654\--674.  arXiv:math/0306180
\bibitem{Hilb1887}{\it Hilbert D.} \"Uber die Singularit\"aten der Diskriminantenfl\"ache// Mathem.Annalen V.30(1887) P.~437--441
\bibitem{HK2010}{\it Hovejin I., Kirillov O.N.} Singularities on the boundary of the stability domain near 1:1-resonance//Journal of Differential Equations V.248(2010), P.~2585\--2607.
\bibitem{HDB1998}{\it Ho M.-T., Datta A., Bhattacharyya S.P.} Design of P, PI and PID controllers for interval plants//Proceedings of the American Control Conference 1998. 1998. P.~2496\--2501.
\bibitem{HDB2000}{\it Ho M.-T., Datta A., Bhattacharyya S.P.} Structure and synthesis of PID controllers. Springer. 2000.
\bibitem{KT2013}{\it Kallel S., Taamallah W.} The Geometry and Fundamental Group of Permutation Products and Fat Diagonals//Canadian Journal of Mathematics V.65(2013). P.~575\---599 arXiv:1010.1507v2
\bibitem{Kal1969}{\it Kalman R.E.} Algebraic characterization of polynomials whose zeros lie in certain algebraic domains
//Proc. Natl. Acad. Sci. V.64(1969), P.~818\--823.
%\bibitem{KI2013} {\it Kallel S., Saihi I.} Homotopy groups of diagonal complements//arXiv:1306.6272
\bibitem{Kam2008}{\it Kamiyama Y.} The homology of spaces of polynomials with roots of bounded multiplicity//Geometry \& Topology Monographs V.13(2008). P.~281\--291.  arXiv:0904.0803 
\bibitem{Kat2003}{\it Katz G.} How tangents solve algebraic equations, or a remarkable geometry of discriminant varieties//Expositiones Mathematicae V.21(2003). P.~219--261.  arXiv:math/0211281 
\bibitem{Kh1987} {\it Khesin B.A.} Homogeneous vector fields and Whitney umbrellas//Russian Mathematical Surveys. V.42(1987). P.171
\bibitem{KS1990} {\it Khesin B.A., Shapiro B.} Swallowtails and Whitney umbrellas are homeomorphic.// J. of Alg. Geometry. V.1(1992). P.~549\--560. 
\bibitem{KO2013} {\it Kirillov O.N., Overton M.L.} Robust stability at the swallowtail singularity.// Frontiers in Physics V.1(2013). Paper 24.
\bibitem{Kne1989}{\it Knebusch M.} Weakly Semialgebraic Spaces. Lecture Notes in Mathematics V.1367. Springer. 1989.
\bibitem{Kos2011}{\it Kostov V.P.} Topics on hyperbolic polynomials on one variable. Soci\`et\`e Math\`ematique de France. 2011.
\bibitem{Koz2002}{\it Kozlov D.N.} Rational homology of spaces of complex monic polynomials with multiple roots// Mathematika. V.49(2002). P.~77\--91.  arXiv:math/0111167
\bibitem{Kur2012}{\it Kurmann S.} Some remarks on equations defining coincident root loci//Journal of Algebra V.352(2012). P.~223\--231  arXiv:1108.4532.
\bibitem{Kun1923}{\it K\"unneth H.} Über die Bettischen Zahlen einer Produktmannigfaltigkeit//Mathematische Annalen V.90(1923). P~.65\--85.
\bibitem{Las2004}{\it Lasserre J.B.} Characterizing polynomials with roots in a semi-algebraic set.//IEEE Transactions on Automatic Control V.49(2004). P.~727\--731.
\bibitem{SL2016}{\it Lee H., Sturmfels B.} Duality of multiple root loci//J. of Algebra V.446(2016). P.~499\--526.  arXiv:1508.00202
\bibitem{Leh1966}{\it Lehnigk S.H.} Stability theorems for linear motions with an Introduction to Liapunov's Direct Method. Prentice-Hall. 1966.
\bibitem{Lev1982}{\it Levantovskii L.V.} Singularities of the boundary of the stability domain.// Functional Analysis and Its Applications. V.16(1982). P.~34\--37.
\bibitem{Lia1954}{\it Liao S.D.} On the topology of cyclic products of spheres// Transactions of the AMS. V.77(1954). P.~520\--551.
\bibitem{McD1962}{\it MacDonald I.G.} Symmetric products of an algebraic curve//Topology V.1(1962). P.~319\--343.
\bibitem{McD1962a}{\it MacDonald I.G.} The Poincar\'e Polynomial of a Symmetric Product//Math.Proc. Cambridge Philos. Soc. V.58(1962). P.~563\--568.
\bibitem{McL1998}{\it MacLane S.} Categories for the working mathematician. Second Edition. Springer. 1998.
\bibitem{MR2008}{\it Markovsky I., Rao S.} Palindromic polynomials, time-reversible systems, and conserved quantities//Proceedings of 16th Mediterranean Conference on Control and Automation. 2008 P.125--130.
\bibitem{Max1868}{\it Maxwell J.C.} On Governors//  Proc. R. Soc. Lond. V.16(1867). P.~270--283.
\bibitem{Maz1999}{\it Mazko A.G.} Localization of Spectrum and Stability of Dynamical Systems.// Proceedings of
the Institute of Mathematics of Ukrainian Academy of Sciences, Vol. 28(1999). Institute of Mathematics of the Ukrainian Academy of Sciences, Kiev, 1999.
\bibitem{Meg1992}{\it Meguerditchian I.} A theorem on the escape from the space of hyperbolic polynomials//Mathematische Zeitschrift. V.211(1992). P.~449\--460.
\bibitem{Mei1995}{\it Meisma G.} Elementary proof of the Routh-Hurwitz test//Systems \& Control Letters V.25(1995). P.~237\--242
\bibitem{MD2014}{\it Melnikov V.G., Dudarenko N.A.} Method of Forbidden Regions in the Dynamic System Matrices Root Clustering Problem//Automation and Remote Control. V.75(2014). P. 1393\--1401.
\bibitem{Mikh2015}{\it Mikhalkin E.N.} Analytic aspects of theory of algebraic functions. Doctoral Thesis(Habilitation). Siberian Federal University. Krasnoyarsk. 2015 (in Russian).
\bibitem{Mil1969}{\it Milgram R.J.} The homology of symmetric products//Transactions of AMS. V.138(1969) P.~251\--265. 
\bibitem{Mit1959}{\it Mitrovic D.} Graphical analysis and synthesis of feedback control systems. I.\--- Theory and Analysis. II.\--- Synthesis. III.\--- Sampled-sata feedback control systems// AIEE Trans.(Appl. and Industry) V.77(1959). P.~476--496.
\bibitem{Mor1967}{\it Morton H. R.} Symmetric products of the circle// Proc. Cambridge Philos. Soc. V.63(1967) P.~349\--352
\bibitem{Nap1998}{\it Napolitano F.} Topology of complements of strata of the discriminant of polynomials//Comptes Rendus de l'Académie des Sciences - Series I - Mathematics V.327(1998) P.~665\--670.
\bibitem{Nei1949}{\it Neimark Y.I.} Stability of the linearized systems. LKVVIA. Leningrad. 1949.  (in Russian)
\bibitem{NP1994}{\it Nemirovskii A.S, Polyak B.T.}  Necessary conditions for stability of polynomials and their utilization //Automation and Remote Control, V.55(1994) P.~1644\--1649
\bibitem{Nik2002}{\it Nikolayev Y.P.} The set of stable polynomials of linear discrete systems: its geometry.//Automation and Remote Control V.63(2002) P.~1080-1088.
\bibitem{Nik2004}{\it Nikolayev Y.P.} The Geometry of $D$-Decomposition of a Two-Dimensional Plane of Arbitrary Coefficients of the Characteristic Polynomial of a Discrete System//Automation and Remote Control V.65(2004) P.~1904-1914. 
\bibitem{Nik2006}{\it Nikolov N.} The symmetrized polydisc cannot be exhausted by domains biholomorphic to
convex domains//Ann. Polon. Math., V.88(2006). P.~279\--283.  arXiv:math/0507190
\bibitem{NPZ2008}{\it Nikolov N., Pflug P., Zwonek W.} An example of a bounded $\CN$-convex domain which is not biholomorphic to a convex domain//Math. Scand. V.102(2008). P.~149\--155.
\bibitem{NTT2016}{\it Nikolov N., Thomas P.J., Trybuta M.} Gromov (non-)hyperbolicity of certain domains in $\CN^n$//Forum Mathematicum. to appear. arXiv:1403.7673
\bibitem{Obr1963}{\it Obreschkoff N.} Verteilung und berechnung der Nullstellen reeller Polynome. Deutscher Verlag der Wissenschaften. 1963.
\bibitem{Ong2003}{\it Ong B.W.} The homotopy type of the symmetric products of bouquets of circles//International Journal of Mathematics  V.14(2003). P.~489\--497
\bibitem{OT1992}{\it Orlik P., Terao H.} Arrangements of hyperplanes. Springer-Verlag. 1992.
%\bibitem{Park2013}{\it Park D.H.} The equivalence conditions for semialgebraically proper maps//Honam Mathematical J. V. 35 (2013) P.~319--327
\bibitem{Peauc2000}{\it Peaucelle D.} Formulation g\`en\`erique de probl\"emes en analyse et commande robuste par des fonctions de Lyapunov dependant de param\"etres.
Ph.D. Thesis, Universit\`e Paul Sabatier Toulouse III, Toulouse-France, July 2000.
\bibitem{PABB2000}{\it Peaucelle D, Arzelier D., Bachelier O., Bernussou J.} A new robust $D$-stability condition for real convex polytopic uncertainty.//Systems and Control Letters.
V.40(2000). P.~21--30
\bibitem{PS2002}{\it Polyak B.T., Shcherbakov P.S.} Superstable Linear Control Systems. I. Analysis// Automation and Remote Control. V.63 (2002). P.~1239\--1254.
\bibitem{Ric1935}{\it Richardson M.} On the homology characters of a symmetric products//Duke Math J. V.1(1935). P.~50\--69.
\bibitem{RS1938}{\it Richardson M., Smith P.A.} Periodic transformations of complexes// Ann. of Math., V.39(1938). P.~611\--633.
\bibitem{Sch1917}{\it Schur I.} Uber Potenzreihen, die im Innern dea Einheitskreises bichrankt sind. I and II.//J. Reine Angew. Math. V.147(1917). P.~205\--232; V.148(1918). P.~122\--145. 
\bibitem{Sch1989}{\it Schwarz N.} The basic theory of real closed spaces//Memoirs of the AMS. 1989. V.77. No 297.
\bibitem{Sch1991}{\it Schwarz N.} Inverse real closed spaces//Illinois J. Math. V.35(1991). P.~536\--568.
\bibitem{SM2004}{\it Seyranian A.P., Mailybaev A.A.} Multiparameter Stability Theory with Mechanical Applications. World Scientific. 2004.
\bibitem{Sha1981}{\it Shafarevich I.R.}  On some infinite-dimensional groups. II//Math. USSR-Izv., V.18(1982) P.~185\--194
\bibitem{SW1996}{\it Shapiro B., Welker V.} Combinatorics and topology of stratifications of the space of monic polynomials with real coefficients//Results in Mathematics. V.33(1998). P.~338\--355. arXiv preprint math/9609211	
\bibitem{Sil1969}{\it Siljak D.} Nonlinear systems: the parameter analysis and design. N.Y. Wiley 1969.
\bibitem{Smi1932}{\it Smith P.A.} The topology of involutions//Proc. Acad.Nat.Sci. USA V.18(1932) P.~612\--618.
\bibitem{Spa1966}{\it Spanier E.H.} Algebraic Topology. McGraw-Hill. 1966.
\bibitem{Sti1993}{\it Stillwell J.} Classical topology and combinatorial group theory. Second edition. Springer. 1993
\bibitem{Tot2002}{\it Toth G.} Finite M\"obius Groups, Minimal Immersions of Spheres and Moduli. Springer-Verlag. 2002. 
%\bibitem{Schd1989}{	\it Scheiderer C.} Quotients of Semi-Algebraic Spaces//Mathematische Zeitschrift V.201(1989) P.~249--271
\bibitem{VS1990}{\it Vainshtein A.D., Shapiro B.Z} Singularities on the boundary of the hyperbolicity region //Journal of Soviet Mathematics. V. 52(1990). P.~3326-3337.
\bibitem{Vas2012a} {\it Vasil'ev O.O.} Study of D-decompositions by the methods of computational real-valued algebraic geometry // Automation and Remote Control. V.73(2012).  P.~1978\--1993. 
\bibitem{Vas2012b} {\it Vasil'ev O.O.} Counting and computing regions of D-decomposition: algebro-geometric approach. arXiv:1202.1777
\bibitem{Vas2014} {\it Vasil'ev O.O.} Algebraic methods for approximation of robust $D$-decomposition border with $1$-dimensional indeterminacy and primary decomposition//Proceedings of XII all-russian conference on control problems VSPU-2014. Moscow, Institute of Control Sciences RAS. 2014. P.~2521\--2530. (in Russian).
\bibitem{VVas1994}{\it Vassiliev V.A.} Complements of Discriminants of Smooth Maps: Topology and Applications. Revised Edition AMS. 1994. 
\bibitem{Vish1876}{\it Vishnegradsky I.} Sur la theorie generale des regulateurs//Comptes Rendues de l'Academ. Sci. V.83(1876) P.~318\--321.
\end{thebibliography}
\end{document}